\theoremstyle{plain} 
\newtheorem{thm}{Theorem}[section] 
\newtheorem*{ikedathm}{Theorem I} 
\newtheorem*{hidathm}{Theorem II} 
\newtheorem*{shintanithm}{Theorem III} 
\newtheorem*{stevensthm}{Theorem IV} 
\newtheorem{prop}[thm]{Proposition} 
\newtheorem{lem}[thm]{Lemma} 
\newtheorem{cor}[thm]{Corollary} 
\theoremstyle{definition} 
\newtheorem{defn}[thm]{Definition} 
\newtheorem{ex}[thm]{Example} 
\newtheorem{rem}[thm]{Remark} 
\newtheorem{fact}[thm]{Fact} 
\newtheorem*{acknow}{Acknowledgements} 
\theoremstyle{remark} 
\newtheorem*{notation}{Notation}
\def\@makefnmark{%
        \leavevmode
        \raise.9ex\hbox{\check@mathfonts
                \fontsize\sf@size\mathbb{Z}@\normalfont%
                        \@thefnmark}%
}
\title[Certain $p$-adic families of Siegel modular forms of even genus]{On certain constructions of $p$-adic families of Siegel modular forms 
of even genus}
\author{Hisa-aki KAWAMURA}
\address{(Primary) Institut Fourier, UFR de Math\'ematiques, Universit\'e Grenoble I, 100 rues des maths, BP 74, 38402 Saint-Martin d'H\`eres cedex, France}
\email{hisa@fourier.ujf-grenoble.fr}
\address{(Secondary) Department of Mathematics, Hokkaido University, Kita 10 Nishi 8, Kita-Ku, Sapporo, 060-0810 Hokkaido, Japan}
\email{kawamura@math.sci.hokudai.ac.jp}
\date{19 September, 2010}                                           
\dedicatory{
{To Professor Hiroshi Saito in memoriam}
}
\thanks{This research was partially supported by the JSPS Core-to-Core Program \lq\lq New Developments of Arithmetic Geometry, Motive, Galois Theory and Their Practical Applications\rq\rq. It is also supported by the JSPS International Training Program (ITP)}
\begin{document}
\maketitle
\begin{abstract}
Suppose that $p > 5$ is a rational prime. Starting from a well-known $p$-adic analytic 
family of ordinary elliptic cusp forms of level $p$ due to Hida, 
we construct a certain $p$-adic analytic family of holomorphic Siegel cusp forms of arbitrary even genus 
and of level $p$ associated with Hida's $p$-adic analytic family via the Duke-Imamo{\={g}}lu lifting provided by Ikeda. 
Moreover, we also derive a similar result for the Siegel Eisenstein series of even genus with trivial Nebentypus.  
\end{abstract}

\tableofcontents

\section{Introduction}

\indent 
For a given rational prime $p> 5$, the study of $p$-adic analytic families of modular forms was initiated by Kummer and Eisenstein for non-cuspidal modular forms 
on the elliptic modular group ${\rm SL}_{2}(\mathbb{Z})$, and afterwards was developed from various points of view by Iwasawa, Kubota-Leopoldt, Serre, Swinnerton-Dyer, Katz, Deligne-Ribet, Hida, Wiles and others. 
In particular, Hida \cite{Hid93} and Wiles \cite{Wil88}, among others, introduced the notion of $\Lambda$-adic modular forms, 
where $\Lambda=\mathbb{Z}_p[[1+p \mathbb{Z}_p]]$ is the Iwasawa algebra, as 
formal $q$-expansions with coefficients in a finite flat $\Lambda$-algebra $R$ 
whose specialization 
at each arithmetic point 
in the $\Lambda$-adic analytic space $\mathfrak{X}(R)={\rm Hom}_{\rm cont}(R,\, \overline{\mathbb{Q}}_p)$ 
gives rise to 
the $q$-expansion of 
an elliptic modular 
form.
In this context, a $p$-adic analytic family of modular forms can be regarded as an infinite collection of modular forms parametrized by varying weights whose components are 
interpolated by a $\Lambda$-adic modular form simultaneously. 

\vspace*{2mm}
In fact, a specific use of Hida's theory 
allows us to construct a $\Lambda$-adic modular form such that every specialization 
gives rise to a $p$-ordinary elliptic Hecke eigenform (i.e. a simultaneous eigenfunction of all Hecke operators such that the eigenvalue of the Atkin-Lehner $U_p$-operator is a $p$-adic unit), which is called the universal ordinary $p$-stabilized newform of tame level 1: 

\begin{fact}[cf. \S 3.1 below]
Let $\Lambda_1=\mathbb{Z}_p[[\mathbb{Z}_p^{\times}]]$ be the completed group ring on $\mathbb{Z}_p^{\times}$ over $\mathbb{Z}_p$, and $\omega=\omega_p$ the Teichm\"uller character. 
There exist a $\Lambda_1$-algebra $R^{\rm ord}$ finite flat over $\Lambda$ 
and a formal $q$-expansion $
{\bf f}_{\rm ord} \in R^{\rm ord}[[q]]$ 
such that 
for each arithmetic point 
$P \in \mathfrak{X}(R^{\rm ord})$ of weight $2k >2
$ with trivial Nebentypus (i.e. $P$ lies over the $\overline{\mathbb{Q}}_p^{\times}$-valued 
continuous character $y \mapsto 
y^{2k}\omega(y)^{2k}$ on $\mathbb{Z}_p^{\times}$), the specialization ${\bf f}_{\rm ord}(P)$ coincides with an ordinary $p$-stabilized newform $f_{2k}^{*}$ of weight $2k$ on 
the congruence subgroup $\Gamma_0(p) \subset {\rm SL}_2(\mathbb{Z})$ of level $p$ with trivial Nebentypus. 
Namely, there exists a $p$-ordinary normalized cuspidal Hecke eigenform $f_{2k}=\sum_{m=1}^{\infty} a_m(f_{2k}) q^m$ of weight $2k$ on 
${\rm SL}_2(\mathbb{Z})$ such that 
\[
f_{2k}^{*}(z) = f_{2k}(z) - \beta_p(f_{2k}) f_{2k}(pz) 
\] 
for all $z \in \mathfrak{H}_1:=\{ z \in \mathbb{C}\, |\, {\rm Im}(z) > 0 \}$, and 
$f_{2k}^{*}$ possesses the $U_p$-eigenvalue $\alpha_p(f_{2k})$, 
where $\alpha_p(f_{2k})$ and $\beta_p(f_{2k})$ denote the unit and non-unit $p$-adic roots of the equation 
\begin{equation}
X^2- a_p(f_{2k})
X+p^{2k-1}=0, 
\end{equation}
respectively. 
\end{fact}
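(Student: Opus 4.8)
The plan is to assemble the relevant pieces of Hida's theory of ordinary $\Lambda$-adic modular forms, specialized to tame level one. The central object is the ordinary cuspidal Hecke algebra
\[
\mathbf{h}^{\rm ord} = e \cdot \varprojlim_r \mathbf{h}_2(\Gamma_1(p^r);\, \mathbb{Z}_p),
\]
the image under Hida's ordinary idempotent $e = \lim_n U_p^{n!}$ of the inverse limit of the weight-two cuspidal Hecke algebras of level $p^r$ and tame level $1$. The diamond operators make $\mathbf{h}^{\rm ord}$ a $\Lambda_1$-algebra, and Hida's fundamental finiteness theorem \cite{Hid93} asserts that $\mathbf{h}^{\rm ord}$ is finite and torsion-free — in fact free of finite rank — over $\Lambda_1$. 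Since $p$ is odd, the decomposition $\mathbb{Z}_p^{\times} \cong \mu_{p-1} \times (1 + p\mathbb{Z}_p)$ exhibits $\Lambda_1$ as a finite free $\Lambda$-module, so $\mathbf{h}^{\rm ord}$ is \emph{a fortiori} finite flat over $\Lambda$. I would therefore set $R^{\rm ord} = \mathbf{h}^{\rm ord}$ (or, if a single family is wanted, the quotient by a minimal prime lying over the trivial-Nebentypus branch) and define the universal eigenform by
\[
\mathbf{f}_{\rm ord} = \sum_{m=1}^{\infty} T_m\, q^m \in R^{\rm ord}[[q]],
\]
where $T_m$ denotes the image in $R^{\rm ord}$ of the $m$-th Hecke operator, with $T_p$ read as $U_p$. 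By construction $U_p$ acts invertibly on the ordinary part, so every specialization is ordinary.

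The heart of the matter is the interpolation property, which rests on Hida's control (comparison) theorem \cite{Hid93}, \cite{Wil88}. For an arithmetic point $P \in \mathfrak{X}(R^{\rm ord})$ of weight $2k > 2$ with trivial Nebentypus — that is, lying over the character $y \mapsto y^{2k}\omega(y)^{2k}$ on $\mathbb{Z}_p^{\times}$ — the control theorem identifies the specialization $R^{\rm ord} \otimes_{\Lambda_1,\, P} \overline{\mathbb{Q}}_p$ with a Hecke eigensystem occurring in the ordinary subspace of $S_{2k}(\Gamma_0(p))$, and shows that $\mathbf{f}_{\rm ord}(P)$ is the $q$-expansion of the corresponding normalized ordinary eigenform $f_{2k}^{*}$. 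It remains to match this with the classical $p$-stabilization picture. Because the tame level is $1$, the $p$-new part of $S_{2k}(\Gamma_0(p))$ is never ordinary (its $U_p$-eigenvalues have absolute value $p^{k-1}$), so the eigensystem descends from a normalized level-one eigenform $f_{2k} = \sum_m a_m(f_{2k}) q^m$ of weight $2k$ on ${\rm SL}_2(\mathbb{Z})$, and ordinarity forces $a_p(f_{2k})$ to be a $p$-adic unit. The Hecke polynomial $X^2 - a_p(f_{2k}) X + p^{2k-1}$ then has exactly one unit root $\alpha_p(f_{2k})$ and one non-unit root $\beta_p(f_{2k})$, and the unique ordinary $p$-stabilization
\[
f_{2k}^{*}(z) = f_{2k}(z) - \beta_p(f_{2k}) f_{2k}(pz)
\]
is precisely the $U_p$-eigenform of eigenvalue $\alpha_p(f_{2k})$; identifying $\mathbf{f}_{\rm ord}(P)$ with this form completes the verification.

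I expect the essential obstacle to lie in the control theorem itself, namely the assertion that specialization at every arithmetic point of weight $2k > 2$ recovers \emph{exactly} the classical ordinary cusp forms, with no spurious contribution surviving from the inverse limit over $p$-power levels. This is the content of Hida's vertical control theorem, which I would invoke directly rather than reprove. Granting it, the remaining points — flatness of $\mathbf{h}^{\rm ord}$ over $\Lambda$, the unit/non-unit root dichotomy under $p$-ordinarity, and the explicit $p$-stabilization formula — are formal. Since the present statement is recorded only as a Fact recalled from \cite{Hid93} and \cite{Wil88}, the argument ultimately amounts to citing these structural theorems and unwinding the definitions in the tame-level-one case.
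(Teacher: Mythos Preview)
Your proposal is correct and takes essentially the same route as the paper's \S 3.1: both build $R^{\rm ord}$ as Hida's universal ordinary Hecke algebra of tame level one, define $\mathbf{f}_{\rm ord}$ by its tautological Hecke $q$-expansion, and invoke Hida's control theorem (recorded as Theorem~II, following \cite{G-S93}) for the interpolation at arithmetic points, together with the observation that ordinary eigenforms on $\Gamma_0(p)$ of weight $>2$ are necessarily $p$-old (the paper's Remark~3.2). The only cosmetic difference is that the paper realizes $R^{\rm ord}$ via its action on the projective limit of parabolic cohomology groups $V_\infty^{\rm ord}$, further cut down by Hida's primitive idempotent $e_{\rm prim}$, whereas you use the equivalent direct weight-two Hecke-algebra description.
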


In this setting, we pick an integer $k_0
\ge 6
$ as small as possible to have an arithmetic point 
${P_0 \in \mathfrak{X}(R^{\rm ord})}$ of weight $2k_0$ with trivial Nebentypus 
corresponding to an actual modular form ${\bf f}_{\rm ord}(P_0)=
f_{2k_0}^{*}$, and fix an analytic 
neighborhood $\mathfrak{U}_0$ of $P_0$ in $ \mathfrak{X}(R^{\rm ord})$
on which every coefficient of ${\bf f}_{\rm ord}$ can be regarded as a $p$-adic analytic function. 
Then, by varying the weights $2k$ of arithmetic points $P \in \mathfrak{U}_0$ with trivial Nebentypus, 
we obtain an ordinary $p$-adic analytic family $\{f_{2k}^{*}\}$ parametrized by 
weights $2k \ge 2k_0$ with ${2k \equiv 2k_0 \pmod{(p-1)p^{m-1}}}$ 
for some sufficiently large integer $m \ge 1$. 
Hereinafter, we refer to it as the Hida family of level $p$.  

\vspace*{3mm}
On the other hand, as an affirmative answer to the Duke-Imamo{\={g}}lu conjecture 
concerning on a generalization of the Saito-Kurokawa lifting towards higher genus, 
Ikeda \cite{Ike01} established the following: 

\begin{fact}[cf. \S 2.1 below]
For each integer $n \ge 1$, 
let $f$ be a normalized cuspidal Hecke eigenform of weight $2k$ on ${\rm SL}_2(\mathbb{Z})$ with $k \equiv n \pmod{2}$.  
Then there exists a non-zero cuspidal Hecke eigenform ${\rm Lift}^{(2n)}(f)$ of weight $k+n$ on the Siegel modular group ${\rm Sp}_{4n}(\mathbb{Z}) \subset {\rm GL}_{4n}(\mathbb{Z})$ of genus $2n$ such that 
the standard $L$-function $L(s,\, {\rm Lift}^{(2n)}(f),\,{\rm st})$ is taken of the form  
\[
L(s,\, {\rm Lift}^{(2n)}(f),\,{\rm st})=\zeta(s)\prod_{i=1}^{2n}L(s+k+n-i,\,f), 
\]
where $\zeta(s)$ and $L(s,\,f)$ denote Riemann's zeta function and Hecke's $L$-function associated with $f$, respectively. 
\end{fact}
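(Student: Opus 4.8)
The plan is to realize ${\rm Lift}^{(2n)}(f)$ by writing down its Fourier expansion explicitly and then proving, in turn, that the resulting formal series is a holomorphic Siegel modular form, that it is a nonzero cusp form, and that it is a Hecke eigenform with the asserted standard $L$-function. First I would replace $f$ by its half-integral counterpart. Since $k\equiv n\pmod 2$, the Shimura correspondence in the precise form of Kohnen and Zagier attaches to the eigenform $f\in S_{2k}({\rm SL}_2(\mathbb{Z}))$ a Hecke eigenform $h=\sum_{D\ge 1}c(D)\,q^{D}\in S_{k+1/2}^{+}(\Gamma_0(4))$ in the Kohnen plus-space, unique up to scalar, with $c(D)=0$ unless $(-1)^{k}D\equiv 0,1\pmod 4$, the coefficients being tied to the twisted central values $L(k,f,\chi_D)$. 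For a positive-definite half-integral symmetric matrix $T$ of size $2n$ I would write $(-1)^{n}\det(2T)=\mathfrak{d}_T\,\mathfrak{f}_T^{2}$ with $\mathfrak{d}_T$ the fundamental discriminant and $\mathfrak{f}_T\in\mathbb{Z}_{>0}$, and set
\[
A(T)=c(|\mathfrak{d}_T|)\,\mathfrak{f}_T^{\,k-1/2}\prod_{p\mid \mathfrak{f}_T}\widetilde{F}_p(T;\alpha_p),
\]
where $\alpha_p$ is the unitarily normalized Satake parameter of $f$ at $p$, so that $\alpha_p+\alpha_p^{-1}=a_p(f)\,p^{-(k-1/2)}$, and $\widetilde{F}_p(T;X)$ is the normalized local Siegel-series polynomial of Kitaoka, rescaled to be invariant under $X\mapsto X^{-1}$ and normalized to equal $1$ whenever $p\nmid\mathfrak{f}_T$. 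The candidate lift is $F(Z)=\sum_{T>0}A(T)\,e({\rm tr}(TZ))$.

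The main obstacle is to show that this $F$ is a genuine holomorphic modular form of weight $k+n$ for ${\rm Sp}_{4n}(\mathbb{Z})$. I would attack this by induction on $n$ through the Fourier--Jacobi expansion, the base case $n=1$ being the classical Saito--Kurokawa/Maass lift, in which $\phi_1\in J_{k+1,1}$ corresponds to $h$ and generates an element of $S_{k+1}({\rm Sp}_4(\mathbb{Z}))$. In general I would read off the Fourier--Jacobi coefficients of $F$ from the lower-right blocks of $T$ and prove that each is a Jacobi cusp form obtained from the first by the natural index-raising (Hecke) operators, i.e.\ that $F$ lies in the generalized Maass Spezialschar; membership in the Spezialschar then forces modularity. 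Verifying the requisite relations among the $A(T)$ is the technical heart: it rests on Katsurada's explicit evaluation of the Siegel series together with the multiplicativity of $\prod_p\widetilde{F}_p$, and it is precisely the functional equation $\widetilde{F}_p(T;X^{-1})=\widetilde{F}_p(T;X)$ that makes the two parameters $\alpha_p,\alpha_p^{-1}$ enter symmetrically and guarantees the compatibility under index raising.

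Granting modularity, the remaining steps are comparatively routine. Cuspidality is immediate because the sum runs only over positive-definite $T$, and $F\neq 0$ follows from $h\neq 0$, since some $c(|\mathfrak{d}|)$ is nonzero and the factors $\widetilde{F}_p$ never vanish. To identify $F$ as a Hecke eigenform and compute its standard $L$-function, I would determine its Satake parameters prime by prime from the shape of $A(T)$: the content factor $\mathfrak{f}_T^{\,k-1/2}$ together with the symmetric polynomials $\widetilde{F}_p(T;\alpha_p)$ forces the Satake parameters of $F$ at $p$ to be $\{\,\alpha_p\,p^{\,i-n-1/2},\ \alpha_p^{-1}p^{\,i-n-1/2}\ :\ 1\le i\le 2n\,\}$ together with the trivial parameter $1$. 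Writing $\alpha_{f,p}=p^{k-1/2}\alpha_p$ and $\beta_{f,p}=p^{k-1/2}\alpha_p^{-1}$ for the roots of $X^2-a_p(f)X+p^{2k-1}=0$, the local standard $L$-factor becomes
\[
(1-p^{-s})^{-1}\prod_{i=1}^{2n}\bigl(1-\alpha_{f,p}\,p^{-(s+k+n-i)}\bigr)^{-1}\bigl(1-\beta_{f,p}\,p^{-(s+k+n-i)}\bigr)^{-1},
\]
and taking the Euler product over all $p$ yields $\zeta(s)\prod_{i=1}^{2n}L(s+k+n-i,f)$, as claimed. I expect this last eigenvalue computation to be mechanical once the explicit form of $\widetilde{F}_p$ is in hand, so that essentially the entire weight of the proof sits on the modularity established in the middle step.
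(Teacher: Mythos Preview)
The paper does not prove this statement: it is labeled a \emph{Fact}, attributed to Ikeda \cite{Ike01}, and \S 2.1 merely recalls the construction (the explicit Fourier coefficient formula appears there as Theorem~I) together with the non-vanishing argument via Kohnen--Zagier (Lemma~2.4). So there is no proof in the paper to compare your proposal against; your sketch is essentially an outline of Ikeda's original argument, with the correct Fourier coefficient formula, the correct identification of modularity as the crux, and the correct computation of the Satake parameters and the standard $L$-function (matching the paper's Remark~2.3).

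One genuine caution concerns your proposed mechanism for modularity. The phrase ``membership in the Spezialschar then forces modularity'' is backwards: the generalized Maass Spezialschar is by definition a subspace of $\mathscr{S}_{k+n}({\rm Sp}_{4n}(\mathbb{Z}))$, and its characterization as the image of the Ikeda lift (Kohnen--Kojima \cite{K-Ko05}, Yamana \cite{Yam10}) was proved \emph{after} and \emph{using} Ikeda's theorem, so invoking it here would be circular. Nor does Ikeda proceed by induction on $n$. What he actually does is compute the Fourier--Jacobi coefficients of the candidate $F$ with respect to the maximal parabolic of type $(1,2n-1)$ and show directly that each one is a Jacobi cusp form of degree $2n-1$, by expressing it as (essentially) a pairing of $h$ against a Jacobi--Eisenstein series; the functional equation of the Siegel series that you correctly highlight is indeed the key input at this step. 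Your broad architecture---FJ expansion, Katsurada's Siegel series, the symmetry $\widetilde{F}_p(T;X)=\widetilde{F}_p(T;X^{-1})$---is right, but the specific reduction to a Spezialschar/induction argument would need to be replaced by Ikeda's direct Jacobi-form computation.
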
 

When $n=1$, ${\rm Lift}^{(2)}(f)$ coincides with the Saito-Kurokawa lifting of $f$, whose existence was firstly conjectured by Saito and Kurokawa \cite{Kur78}, and afterwards was shown by 
Maa\ss  \,\cite{Maa79}, Andrianov \cite{And79} and Zagier \cite{Zag81}. 
In accordance with the tradition, 
we refer to ${\rm Lift}^{(2n)}(f)$ as the Duke-Imamo{\={g}}lu lifting of $f$ throughout the present article. 
We should mention that such particular objects obtained by means of the 
lifting process from lower genus are, of course, not ``genuine'' Siegel cusp forms of higher genus in the strict sense. Indeed, 
the above functoriality equation yields 
that 
the associated Satake parameter 
$(\psi_0(p),\psi_1(p), \cdots, \psi_{2n}(p)) \in (\overline{\mathbb{Q}}_p^{\times})^{2n+1}$ at $p$ 
is taken as 
\[
\psi_i(p)=\left\{
\begin{array}{ll}
p^{nk-n(n+1)/2} & \textrm{if }i=0, \\[1.5mm]
\alpha_p(f) p^{-k+i} & \textrm{if }1 \le i \le n, \\[2mm]
\beta_p(f) p^{-k-n+i} & \textrm{if }n+1 \le i \le 2n,
\end{array}\right.
\]
uniquely up to the action of the Weyl group $W_{2n} \simeq \mathfrak{S}_{2n} \ltimes \{\pm 1\}^{2n}$,  
where $(\alpha_p(f), \beta_p(f))$ denotes the ordered pair of the roots of the same equation as (1) 
with the inequality of $p$-adic valuations $v_p(\alpha_p(f)) \le v_p(\beta_p(f))$. 
Therefore the famous Ramanujan-Petersson conjecture for $f$ 
implies the fact that 
${\rm Lift}^{(2n)}(f)$ generates a non-tempered cuspidal automorphic representations of the group ${\rm GSp}_{4n}(\mathbb{A}_{\mathbb{Q}})$ of symplectic similitudes, where $\mathbb{A}_{\mathbb{Q}}$ denotes the ring of 
adeles of $\mathbb{Q}$. 
However, it has also been observed 
that some significant properties of ${\rm Lift}^{(2n)}(f)$ can be derived from 
corresponding properties of $f$. 
For instance, 
as will be explained more precisely in the sequel, 
the Fourier expansion of ${\rm Lift}^{(2n)}(f)$ can be written explicitly in terms of those of $f$ and a cuspidal Hecke eigenform of half-integral weight corresponding to $f$ via the Shimura correspondence. 

\vspace*{2.5mm}
Now, let us explain our results. 
Let 
$\{f_{2k}\}$ be the infinite 
collection of $p$-ordinary normalized cuspidal Hecke eigenforms on ${\rm SL}_2(\mathbb{Z})$ 
corresponding to the Hida family $\{f_{2k}^{*}\}$ 
via the ordinary $p$-stabilization. 
The aim of the present article is to construct a $p$-adic analytic family of Siegel cusp forms on the congruence subgroup 
$\Gamma_0(p) \subset {\rm Sp}_{4n}(\mathbb{Z})$ of level $p$ corresponding to 
the collection 
$\{{\rm Lift}^{(2n)}(f_{2k})\}$ under a suitable $p$-stabilization process. 
Namely, our main results are summarized as follows: 

\begin{thm}
For each integer $n \ge 1$, let $k_0$ be a positive integer with $k_0 > n+1$ and $k_0 \equiv n \pmod{2}$, $P_0 \in \mathfrak{X}(R^{\rm ord})$ an arithmetic point of weight $2k_0$ with trivial Nebentypus, 
and $\mathfrak{U}_0$ a fixed analytic neighborhood of $P_0$.  
For each integer $k \ge k_0$ with $k \equiv k_0 \pmod{2}$, 
put 
\begin{eqnarray*}
\Phi_p^{*}(Y)
&:=&
(Y-
\alpha_p(f_{2k})^n
)^{-1}
\prod_{r=1}^{2n}\,\, \prod_{1\le i_1 < \cdots < i_r \le 2n} 
(Y - \psi_0(p) \psi_{i_1\!}(p) \cdots \psi_{i_r\!}(p)), \\
\Psi_p^{*}(Y)&:=&
(Y-\alpha_p(f_{2k})^{n-1} p^{k+n-1})
\prod_{i=1}^{n}(Y-
\alpha_p(f_{2k})^{n-1} \beta_p(f_{2k})p^{2i-2}
),
\end{eqnarray*}
and 
\[
{\rm Lift}^{(2n)}(f_{2k})^{*} := {\Psi_p^{*}(\alpha_p(f_{2k})^n) \over 
\Phi_p^{*}(\alpha_p(f_{2k})^n)
} \cdot {\rm Lift}^{(2n)}(f_{2k})\, |_{k+n}
\Phi_p^{*}(U_{p,0}), 
\]
where $(\psi_0(p),\psi_1(p),\cdots,\psi_{2n}(p))
$ is the Satake parameter of ${\rm Lift}^{(2n)}(f_{2k})$ taken as above, and $U_{p,0}$ is the Hecke operator 
corresponding to the double coset 
\[
I_{4n}\, {\rm diag}(\underbrace{1,\cdots,1}_{2n},\underbrace{p,\cdots,p}_{2n})
\, I_{4n}  
\]  
with respect to 
the standard Iwahori subgroup $I_{4n}$ of ${\rm GSp}_{4n}(\mathbb{Z}_p)$. 
Then we have 
\begin{enumerate}[\upshape (i)]
\item 
${\rm Lift}^{(2n)}(f_{2k})^{*}$ is a cuspidal Hecke eigenform of weight $k+n$ on $\Gamma_0(p) \subset {\rm Sp}_{4n}(\mathbb{Z})$ with trivial Nebentypus such that the eigenvalues agree with those of 
${\rm Lift}^{(2n)}(f_{2k})$ 
for each prime $l \ne p$, 
and we have 
\[
{\rm Lift}^{(2n)}(f_{2k})^{*}\,|_{k+n}\,U_{p,0} = \alpha_p(f_{2k})^n\cdot {\rm Lift}^{(2n)}(f_{2k})^{*}. 
\]

\item 
\vspace*{0.5mm}
Let 
$\sigma : \Lambda_1 \to \Lambda_1$ be the ring homomorphism induced from the group homomorphism $y \mapsto y^2$ on $\mathbb{Z}_p^{\times}$, and  
\[
\widetilde{R}^{\rm ord}:=R^{\rm ord} \otimes_{\Lambda_1,\sigma} \Lambda_1,  
\]
a finite $\Lambda_1$-algebra with the structure homomorphism $\lambda \mapsto 1 \otimes \lambda$ on $\Lambda_1$. 
If $\widetilde{P}_0 \in \mathfrak{X}(\widetilde{R}^{\rm ord})$ is an arithmetic point lying 
over $P_0
$, 
there exist a formal Fourier expansion ${\bf F}$ with coefficients in the localization $
(\widetilde{R}^{\rm ord})_{(\widetilde{P}_0)}$ of $\widetilde{R}^{\rm ord}$ at $\widetilde{P}_0$, and a choice of $p$-adic periods $\Omega_{P} \in \overline{\mathbb{Q}}_p$ for $P \in \mathfrak{U}_0
$ in the sense of Greenberg-Stevens \cite{G-S93} satisfying the following properties: 
\begin{itemize}
\item
$\Omega_{P_0} \ne 0$. 
\item
There exists a normalization of ${\rm Lift}^{(2n)}(f_{2k})$ such that for each arithmetic point $\widetilde{P} \in \mathfrak{X}(\widetilde{R}^{\rm ord})$ lying over 
some arithmetic point $P \in \mathfrak{U}_0
$ 
of weight $2k$ with trivial Nebentypus, we have 
\[
{\bf F}(\widetilde{P})={\Omega_{P} \over \Omega^{\epsilon}(P)}\,{\rm Lift}^{(2n)}(f_{2k})^{*} \ne 0, 
\] 
where $\Omega^{\epsilon}(P) \in \mathbb{C}^{\times}$ 
is the non-zero complex period of $f_{2k}$ with signature $\epsilon \in \{\pm\}$ in the sense of Manin \cite{Man73} and Shimura \cite{Shim82}. 
\end{itemize}
\end{enumerate}
Therefore we obtain a 
$p$-adic analytic family of non-zero Siegel cusp forms $\{{\Omega_{P} \over \Omega^{\epsilon}(P)}\,
{\rm Lift}^{(2n)}(f_{2k})^{*}\}$ 
parametrized by varying weights $2k \ge 2k_{0}$ with 
\begin{center}
$k \equiv k_0 \pmod{2}$ and\,  $2k \equiv 2k_0 \pmod{(p-1)p^{m-1}}$ 
\end{center}
for some sufficiently large $m \ge 1$. 
\end{thm}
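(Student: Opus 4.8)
The plan is to treat the two assertions separately: part (i) is a purely local $p$-stabilization at $p$, while part (ii) is a global $p$-adic interpolation of Fourier coefficients. For (i), I would start from the fact that ${\rm Lift}^{(2n)}(f_{2k})$ has level $1$, so its local component at $p$ is an irreducible unramified principal series $\pi_p$ with the Satake parameter $(\psi_0(p),\dots,\psi_{2n}(p))$ displayed above. The operator $U_{p,0}$ lies in the Iwahori-Hecke algebra, and by Casselman's theory (or the Bernstein presentation) its eigenvalues on the oldform subspace of the Iwahori-fixed space $\pi_p^{I}$ generated by the spherical vector are the Weyl-translates of the unramified character evaluated on the cocharacter ${\rm diag}(1^{2n},p^{2n})$. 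A direct bookkeeping with these translates should show that the eigenvalues are exactly the products $\psi_0(p)\psi_{i_1}(p)\cdots\psi_{i_r}(p)$ over nonempty $\{i_1<\cdots<i_r\}\subseteq\{1,\dots,2n\}$, so that the minimal polynomial of $U_{p,0}$ on this space is $(Y-\alpha_p(f_{2k})^n)\,\Phi_p^{*}(Y)$. Substituting the explicit parameters one checks the identity $\psi_0(p)\psi_1(p)\cdots\psi_n(p)=\alpha_p(f_{2k})^n$, and since $v_p(\alpha_p)=0$ while $v_p(\beta_p)=2k-1$, this is the unique \emph{unit} eigenvalue, i.e.\ the ordinary direction. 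Hence $\Phi_p^{*}(U_{p,0})$ annihilates every non-ordinary eigencomponent and projects the spherical vector onto the $\alpha_p(f_{2k})^n$-eigenline, while the scalar $\Psi_p^{*}(\alpha_p(f_{2k})^n)/\Phi_p^{*}(\alpha_p(f_{2k})^n)$ renormalizes a distinguished Fourier coefficient. As the stabilization touches only the component at $p$, the eigenvalues at all $l\ne p$ are unchanged, giving (i).

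For (ii), the starting point is Ikeda's explicit Fourier-coefficient formula: for the half-integrally normalized Hecke eigenform $g$ in the Kohnen plus-space attached to $f_{2k}$ under the Shimura correspondence, each Fourier coefficient of ${\rm Lift}^{(2n)}(f_{2k})$ factors as a product of a coefficient $c_g(|D_T|)$ of $g$ (the global datum) with a finite product of local polynomials in the Satake parameters (the EGK local densities). The key arithmetic input is that, by the Waldspurger--Kohnen--Zagier formula, $c_g(|D|)^2$ is proportional to a central critical value of a quadratic twist of $L(s,f_{2k})$, normalized by the Manin--Shimura period $\Omega^{\epsilon}(P)$; this is precisely the quantity that Hida's theory together with the Greenberg--Stevens $p$-adic periods $\Omega_P$ render $p$-adically analytic. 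Concretely I would (a) $p$-stabilize each local factor at $p$ in accordance with (i) so as to match ${\rm Lift}^{(2n)}(f_{2k})^{*}$, (b) divide the coefficients by $\Omega^{\epsilon}(P)$ so that the normalized half-integral coefficients become specializations of a $\Lambda$-adic half-integral weight form, and (c) interpolate the remaining local polynomials, which are honest polynomials in $\alpha_p(f_{2k})$ and $p^{k}$ and hence manifestly $p$-adic analytic on $\mathfrak{U}_0$. Assembling these produces the formal Fourier expansion ${\bf F}$ with coefficients in $(\widetilde{R}^{\rm ord})_{(\widetilde{P}_0)}$; the base change $\widetilde{R}^{\rm ord}=R^{\rm ord}\otimes_{\Lambda_1,\sigma}\Lambda_1$ along the squaring map $\sigma$ is what reconciles the Siegel weight $k+n$ (governed by $k$) with the elliptic weight $2k$, so that a single variable parametrizes the whole family and the interpolation formula ${\bf F}(\widetilde{P})=(\Omega_P/\Omega^{\epsilon}(P))\,{\rm Lift}^{(2n)}(f_{2k})^{*}$ holds.

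The hard part will be step (b): showing that the period-normalized central values, equivalently the squared plus-space coefficients $c_g(|D|)^2/\Omega^{\epsilon}(P)$, genuinely interpolate into an element of the finite flat $\Lambda$-algebra $\widetilde{R}^{\rm ord}$ and are nonvanishing at $\widetilde{P}_0$. This is exactly where the Greenberg--Stevens periods $\Omega_P$ must be chosen so that the ratio $\Omega_P/\Omega^{\epsilon}(P)$ absorbs all transcendental discrepancies, leaving a $p$-adic unit at $P_0$ and thereby guaranteeing both $\Omega_{P_0}\ne0$ and ${\bf F}(\widetilde{P}_0)\ne0$. The genuinely delicate points are controlling the square-root ambiguity in passing from the interpolable quantity $c_g(|D|)^2$ to the coefficient $c_g(|D|)$ itself, and tracking the dependence on the signature $\epsilon\in\{\pm\}$ of the complex period. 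Once this $p$-adic interpolation of the half-integral weight coefficients is secured, the analyticity of the normalized Siegel family over $\mathfrak{U}_0$ and the asserted nonvanishing of each member follow by combining (a), (b) and (c).
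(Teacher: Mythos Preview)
Your overall architecture is right: part (i) is local at $p$, part (ii) is a Fourier-coefficient-by-Fourier-coefficient interpolation built from Ikeda's formula. But the execution diverges from the paper at two places worth flagging.

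For (i), your representation-theoretic argument via the Iwahori-fixed space and Casselman's basis is perfectly valid and conceptually clean. The paper takes a more computational route: it first proves an explicit Fourier-coefficient formula for ${\rm Lift}^{(2n)}(f)^{*}$ (Theorem~4.1), namely
\[
A_T({\rm Lift}^{(2n)}(f)^{*}) = \Bigl(1-\bigl(\tfrac{\mathfrak{d}_T}{p}\bigr)\beta_p(f)p^{-k}\Bigr)c_{|\mathfrak{d}_T|}(h)\,\alpha_p(f)^{v_p(\mathfrak{f}_T)+n(n+1)}\prod_{l\mid \mathfrak{f}_T,\, l\ne p}\alpha_l(f)^{v_l(\mathfrak{f}_T)}F_l(T;\beta_l(f)l^{-k-n}),
\]
via a polynomial identity among the Siegel-series polynomials $F_p(p^jT;X)$ (generalizing a formula of Zharkovskaya and Kitaoka). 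The $U_{p,0}$-eigenvalue statement is then read off from $A_{pT}=\alpha_p(f)^n A_T$. Your approach buys conceptual clarity; the paper's buys the explicit formula, which is what actually drives part (ii).

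For (ii), there is a genuine gap in your plan. You propose to interpolate $c_g(|D|)^2$ via Kohnen--Zagier and central $L$-values, and you correctly identify the square-root ambiguity as ``the genuinely delicate point.'' The paper \emph{never takes a square root}: it invokes Stevens' $\Lambda$-adic Shintani lifting (Theorem~IV), which produces a formal $q$-expansion $\Theta_{\mathfrak{d}_0}\in\widetilde{R}^{\rm ord}[[q]]$ whose specializations are $(\Omega_P/\Omega^{\epsilon}(P))\,\vartheta_{\mathfrak{d}_0}(f_{2k}^{*})$ themselves, not their squares. This is the key external input you are missing; with it, the half-integral coefficients interpolate directly and the sign/period ambiguities are absorbed into the Greenberg--Stevens $\Omega_P$ by construction. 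Without Stevens' theorem, extracting a coherent square root across the family is a serious obstruction that your outline does not resolve.

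There is a second, more technical step you do not address. Stevens' lifting interpolates $c_{|\mathfrak{d}|}(\vartheta_{\mathfrak{d}_0}(f_{2k}^{*}))$ at level $p$, whereas the Fourier coefficient of ${\rm Lift}^{(2n)}(f_{2k})^{*}$ involves $c_{|\mathfrak{d}_T|}(\vartheta_{\mathfrak{d}_0}(f_{2k}))$ at level $1$, multiplied by the Euler-type factor $(1-(\tfrac{\mathfrak{d}_T}{p})\beta_p p^{-k})$. The paper bridges this via Proposition~3.7, which compares cycle integrals over $\mathcal{L}_p(\mathfrak{D})/\Gamma_0(p)$ and $\mathcal{L}(\mathfrak{D})/{\rm SL}_2(\mathbb{Z})$ through a case analysis on $\mathfrak{d}\pmod p$ (Lemma~3.6 on binary quadratic forms); one case even requires switching to an auxiliary discriminant $\mathfrak{d}_1$ and passing to the localization $(\widetilde{R}^{\rm ord})_{(\widetilde{P}_0)}$. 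This comparison is exactly what makes the modified Euler factor at $p$ appear, and it matches up with the local $p$-factor coming from Theorem~4.1. Once Stevens' lifting and Proposition~3.7 are in hand, the remaining local factors at $l\ne p$ are rewritten via Kohnen's identity $\alpha_l^{v_l(\mathfrak{f}_T)}F_l(T;\beta_l l^{-k-n})=\sum_i \phi_T(l^{v_l(\mathfrak{f}_T)-i})(l^{k-1})^{v_l(\mathfrak{f}_T)-i}a_{l^i}(f)$, which is manifestly a polynomial in the Hida-family data ${\bf a}_{l^i}$ and the tautological character $l\mapsto l^{k-1}$, and the formal Fourier expansion ${\bf F}$ is assembled term by term.
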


We note that the existence of such $p$-adic analytic families have been suggested by Guerzhoy \cite{Gue00} for $n=1$, and conjectured by Panchishkin \cite{Pan10} for arbitrary $n>1$. 
In particular, Guerzhoy \cite{Gue00} derived  
a similar $p$-adic interpolation property of an essential part of the Fourier expansion of ${\rm Lift}^{(2)}(f_{2k})$ under mild conditions. 
The above theorem can be regarded as a generalization of his result reformulated in the way that seems most appropriate for the study of $p$-adic properties 
of the Duke-Imamo{\={g}}lu lifting. 

\vspace*{3mm}
For the proof of Theorem 1.3, 
we will give an explicit form of the Fourier expansion of ${\rm Lift}^{(2n)}(f_{2k})^{*}$ (cf. Theorem 4.1 below).  
By combining this with the $\Lambda$-adic Shintani lifting due to Stevens \cite{Ste94}, 
we will give a $\Lambda$-adic analogue of the classical Duke-Imamo{\={g}}lu lifting 
for the universal ordinary $p$-stabilized newform ${\bf f}_{\rm ord}$, which 
allows us to resolve the $p$-adic interpolation problem for the whole Fourier expansion of ${\rm Lift}^{(2n)}(f_{2k})^{*}$ (cf. Theorem 4.4 below). 

\begin{rem}
From a geometric point of view, a $p$-adic deformation theory for Siegel modular forms of arbitrary genus has been established by Hida in the ordinary case (cf. \cite{Hid02, Hid04}, see also \cite{T-U99}).  
Unfortunately, ${\rm Lift}^{(2n)}(f_{2k})$ does not admit the ordinary $p$-stabilization in the sense of Hida. 
%
However, it turns out that a slight weaker version ${\rm Lift}^{(2n)}(f_{2k})^{*}$ 
is sufficient to adapt the ordinary theory. 
In the same spirit as Skinner-Urban \cite{S-U06}, we refer to it 
as the 
``semi-ordinary'' 
$p$-stabilization of ${\rm Lift}^{(2n)}(f_{2k})$. 
For further details on the topic will be discussed in \S 4 below. 
\end{rem}

When $n=1$, more generally in the same direction, Skinner-Urban \cite{S-U06} 
produced a $p$-adic deformation of the cuspidal automorphic representation of ${\rm GSp}_4(\mathbb{A}_{\mathbb{Q}})$ generated by 
${\rm Lift}^{(2)}(f)$ 
within the framework of a significant study of 
the Selmer group ${H_f^1(\mathbb{Q},\,V_f(k))}$ defined by Bloch-Kato \cite{B-K90}, where $V_f$ denotes the $p$-adic Galois representation associated with $f$ in the sense of Eichler-Shimura and Deligne (cf. \cite{Del69}). 
Although they must be not entirely smooth (e.g. we cannot associate Siegel modular forms of genus $2n \ge 4$ with Galois representations so far), similar arithmetic applications of 
 $p$-adic analytic families would be stimulated by  
the recent progress on 
Ikeda's generalization of the Duke-Imamo{\={g}}lu lifting 
towards a Langlands functorial lifting of cuspidal automorphic representations of ${\rm PGL}_2(\mathbb{A}_K)$ to ${\rm Sp}_{4n}(\mathbb{A}_K)$ over a totally real field extension $K/\mathbb{Q}$ (cf. \cite{Ike10}). 
Indeed, as a consequence of the period relation for ${\rm Lift}^{(2n)}(f)$ that was conjectured by Ikeda \cite{Ike06} and afterwards was shown by Katsurada and the author \cite{KK10}, we may produce a certain kind of congruence properties occurring between ${\rm Lift}^{(2n)}(f)$ and some genuine Siegel cusp forms of genus $2n$ under mild conditions, which is very similar to those in \cite{S-U06} (cf. \cite{Kat08,Kat10}, see also Brown \cite{Bro07} for $n=1$).
This type of congruence properties and their applications were firstly conjectured by Harder \cite{Har93} for the Saito-Kurokawa lifting 
and by Doi-Hida-Ishii \cite{DHI98} for the Doi-Naganuma lifting. 

\vspace*{2mm}
It should be emphasized that our approach based on the description of Fourier expansions is more explicit than the method using the theory of automorphic representations, and hence 
yields some practical benefit. For instance, the semi-ordinary $p$-stabilized form ${\rm Lift}^{(2n)}(f_{2k})^{*}$ can be regarded naturally as the Duke-Imamo{\={g}}lu lifting of 
$f_{2k}^{*}$, although the $p$-local component of the associated cuspidal automorphic representation of ${\rm Sp}_{4n}(\mathbb{A}_{\mathbb{Q}})$ is 
a quadratic twist of the Steinberg representation in general {(cf. \cite{Ike10})}. 

\vspace*{2mm}
Finally, as will be explained in \S 5, we note that the method we use for the Duke-Imamo{\={g}}lu lifting 
is adaptable to the Eisenstein series as well. Indeed, by taking Serre's $p$-adic analytic family of ordinary $p$-stabilized Eisenstein series instead of the Hida family, we also obtain a similar result for the Siegel Eisenstein series of genus $2n$, which is closely related to the results due to Takemori \cite{Tak10} and 
Panchishkin \cite{Pan00}. In addition, as conjectured in \cite{Pan10}, our constructions of $p$-adic analytic families are also extendable to those of families of Siegel cusp forms of odd genus 
by means of the Miyawaki lifting (cf. \cite{Ike06}). The corresponding result will appear elsewhere. 

\vspace*{3mm}
\begin{acknow}
The author 
is deeply grateful to Professor Alexei Panchishkin and Professor Haruzo Hida for their valuable suggestions and comments. Moreover, discussions with Professor Jacques Tilouine, Professor Hidenori Katsurada, Professor Siegfried B{\"o}cherer, Professor Tamotsu Ikeda, 
Professor Marc-Hubert Nicole 
and Dr. Tomokazu Kashio
have been illuminating for him. 
\end{acknow}

\smallskip

\begin{notation}
We denote by $\mathbb{Z},\, \mathbb{Q},\, \mathbb{R},$ and $\mathbb{C}$ the ring of integers, fields of rational numbers, real numbers and complex numbers, 
respectively. We put $e(x)=\exp(2\pi \sqrt{-1}x)$ for $x \in \mathbb{C}$. 
For each rational prime $l$, we denote by $\mathbb{Q}_l$, $\mathbb{Z}_l$ and $\mathbb{Z}_l^{\times}$ 
the field of $l$-adic numbers, the ring of $l$-adic integers and the group of units of $\mathbb{Z}_l$, respectively. Hereinafter, we fix an algebraic closure $\overline{\mathbb{Q}}_l$ of $\mathbb{Q}_l$. 
Let $v
_l(*)$ denote the $l$-adic valuation 
normalized as $v_l(l)=1$, and $e_l(*)$ the continuous additive character on $\overline{\mathbb{Q}}_l$ such that  
$e_l(y)=e(y)$ for all $y \in \mathbb{Q}$. 
Throughout the article, we fix an odd prime $p > 5$. 
From now on, we take the algebraic closure $\overline{\mathbb{Q}}$ of $\mathbb{Q}$ inside $\mathbb{C}$, and identify it with the image under a fixed embedding $\overline{\mathbb{Q}} \hookrightarrow \overline{\mathbb{Q}}_p$ once for all. 

For each integer 
$g \ge 1$, let ${\rm GSp}_{2g}
$ and ${\rm Sp}_{2g}
$ be 
the $\mathbb{Q}$-linear algebraic groups introduced as follows: 
\begin{enumerate}
\item[] 
$
{\rm GSp}_{2g}
:=
\left\{\, M \in {\rm GL}_{2g}\,
\right|  \left. {}^t M 
J
M = \mu(M) 
J
\textrm{ for some } \mu(M) \in 
{\rm GL}_1
\,\right\}
$,
\item[] 
${\rm Sp}_{2g}
:=\left\{\, M \in {\rm GSp}_{2g}
\left| \,\,\mu(M) =1\, \right. \right\}$, 
\end{enumerate}
where 
$
J=J_{2g} =
\left(
\begin{smallmatrix}
0_g
 & 1_g \\
{-1}_g & 
0_g
\end{smallmatrix}
\right) 
$ with the $g\times g$ unit (resp. zero) matrix $1_g$ {(resp. $0_g$)}. 
We denote by ${\rm B}_{2g}$ the standard Borel subgroup of ${\rm GSp}_{2g}$, and by 
${\rm P}_{2g}$ the associated Siegel parabolic subgroup, that is, 
${\rm P}_{2g}=\{M = \left(\begin{smallmatrix}
* & * \\
0_g & *
\end{smallmatrix}\right)\in {\rm GSp}_{2g}\}$. 
Each real point $M=\left(\begin{smallmatrix}
A & B \\
C & D
\end{smallmatrix}
\right) \in {\rm GSp}_{2g}(\mathbb{R})$ with $A,\,B,\,C,\,D \in {\rm Mat}_{g\times g}(\mathbb{R})$ and $\mu(M)>0$ acts on the Siegel upper-half space 
\[
\mathfrak{H}_g := \left\{ Z =X+\sqrt{-1}\,Y\in {\rm Mat}_{g\times g}(\mathbb{C}) \left|\, {}^t Z =Z,\, 
Y > 0\, (\textrm{positive definite})\right.\right\}
\]
of genus $g$ via the linear transformation $Z \mapsto M(Z)=(AZ+B)(CZ+D)^{-1}$. 
Then for a given $\kappa \in \mathbb{Z}$
and a function $F$ on $\mathfrak{H}_n$, we define an action of $M$ on $F$ by 
\[
(F|_
\kappa M )(Z):= \mu(M)^{g\kappa-g(g+1)/2} 
\det(CZ+D)^{-\kappa} F(M(Z)).
\] 
For handling Siegel modular forms of genus $g$, we consider the following congruence subgroups of the Siegel modular group 
${\rm Sp}_{2g}(\mathbb{Z})$: for each integer $N \ge 1$, put 
\begin{enumerate}[\upshape (i)]
\item[] $\Gamma_0(N)
:=\left\{ 
M \in {\rm Sp}_{2g}(\mathbb{Z}) \left|\,\,
M \equiv 
\left(\begin{array}{cc}
* & * \\
0_{g} & *
\end{array}\right) \pmod{N} 
\right. \right\}$,\\[1mm]

\item[] $\Gamma_1(N)
:=\left\{ 
M \in {\rm Sp}_{2g}(\mathbb{Z}) \left|\,\,
M \equiv 
\left(\begin{array}{cc}
* & * \\
0_{g} & 1_{g}
\end{array}\right) \pmod{N} 
\right. \right\}$. 
\end{enumerate} 
For each $\kappa \in \mathbb{Z}
$, 
the space $\mathscr{M}_{\kappa}(
\Gamma_1(N)
)^{(g)}
$ of ({\it holomorphic}\,) {\it Siegel modular forms} of weight $\kappa$ on $
\Gamma_1(N)\subseteq {\rm Sp}_{2g}(\mathbb{Z})
$, consists of $\mathbb{C}$-valued holomorphic functions $F$ on $\mathfrak{H}_{g}$ satisfying the following conditions: 
\begin{center}
\vspace*{-4mm}
\begin{minipage}[t]{0.975\textwidth}
\begin{enumerate}[\upshape (i)]

\item $F|_\kappa M = F$ \, for any 
$M
\in 
\Gamma_1(N)
$;

\item For each $M \in {\rm Sp}_{2g}(\mathbb{Z})$, the function $F|_\kappa M$ possesses a Fourier expansion of the form
\[
(F|_\kappa M)(Z) = 
\displaystyle\sum_
{T \in {\rm Sym}_g^{*}(\mathbb{Z})}
A_{T}(F|_\kappa M)\, e({\rm trace}(TZ)),
\]

\item[\hspace*{6mm}] 
where we denote by ${\rm Sym}_g^{*}(\mathbb{Z})$ the dual lattice of ${\rm Sym}_g(\mathbb{Z})$, that is, 
consisting of all half-integral symmetric matrices:   
\[
\hspace*{5mm}
{\rm Sym}_g^{*}(\mathbb{Z})=
\{T=(t_{ij}) \in {\rm Sym}_{g}(\mathbb{Q})\,|\,
t_{ii},\,2t_{ij} \in \mathbb{Z}\,\,(1 \leq i < j \leq g)
\}.\] 
It satisfies that $A_T(F |_\kappa M)=0$ unless $T\ge 0$ $(\textrm{semi positive definite})$ for all $M \in {\rm Sp}_{2g}(\mathbb{Z})$. 
\end{enumerate}
\end{minipage}
\end{center}
A modular form $F \in \mathscr{M}_{\kappa}(
\Gamma_1(N))^{(g)}$ is said to be {\it cuspidal} (or a {\it cusp form}) if it satisfies a stronger condition $A_{T}(F|_\kappa M) = 0$ unless $T > 0$ 
for all $M \in {\rm Sp}_{2g}(\mathbb{Z})$. We denote by $\mathscr{S}_{\kappa}(
\Gamma_1(N))^{(g)}
$ the subspace of $\mathscr{M}_{\kappa}(
\Gamma_1(N))^{(g)}$ consisting of all cusp forms. 
When $N=1$, we subsequently write $\mathscr{M}_{\kappa}({\rm Sp}_{2g}(\mathbb{Z}))$ and $\mathscr{S}_{\kappa}({\rm Sp}_{2g}(\mathbb{Z}))$ instead of $
\mathscr{M}_{\kappa}(\Gamma_1(1))^{(g)}$ and $
\mathscr{S}_{\kappa}(\Gamma_1(1))^{(g)}$, respectively. 
For each Dirichlet character $\chi$ modulo $N$, we denote by  
$\mathscr{M}_{\kappa}(\Gamma_0(N),\,\chi)^{(g)}$ the subspace of $\mathscr{M}_{\kappa}(\Gamma_1(N))^{(g)}$ consisting of all forms $F$ with Nebentypus $\chi$, that is, satisfying the condition
\[
F|_\kappa M = \chi(\det D) F \textrm{ \hspace{2mm}  for all } M
=\left(
\begin{smallmatrix}
  A & B  \\
  C & D  \\
\end{smallmatrix}
\right) 
\in \Gamma_0(N),  
\]
and put $\mathscr{S}_{\kappa}(\Gamma_0(N),\,\chi)^{(g)}:= \mathscr{M}_{\kappa}(\Gamma_0(N),\,\chi)^{(g)}\cap \mathscr{S}_{\kappa}(\Gamma_1(N))^{(g)}$. 
In particular, if $\chi=\chi_0$ is the principal character, we naturally write $\mathscr{M}_{\kappa}(\Gamma_0(N))^{(g)}=\mathscr{M}_{\kappa}(\Gamma_0(N), \chi_0)^{(g)}$ and ${\mathscr{S}_{\kappa}(\Gamma_0(N))^{(g)}=\mathscr{S}_{\kappa}(\Gamma_0(N),\,\chi_0)^{(g)}}$, respectively. 

For each $T=(t_{ij}) \in {\rm Sym}_g^{*}(\mathbb{Z})$ and $Z=(z_{ij}) \in \mathfrak{H}_g$, we write 
\[
q^{T}:=e({\rm trace}(TZ))
=\prod_{i=1}^{g} q_{ii}^{t_{ii}} \prod_{i<j \leq g} q_{ij}^{2t_{ij}}, 
\]
where $q_{ij}=e(z_{ij})\, (1\leq i \leq j \leq n)$. 
Then it follows from the definition that each $F \in \mathscr{M}_{\kappa}(
\Gamma_1(N)
)^{(g)}$ possesses the usual Fourier 
expansion 
\[
F(Z)=\sum_{\scriptstyle T \in {\rm Sym}_g^{*}(\mathbb{Z}), 
\atop {\scriptstyle T \ge 0}
} A_{T}(F)\, 
q^{T} \in 
\mathbb{C}[\,q_{ij}^{\pm 1}\,|\,1 \leq i < j \leq g\,][[q_{11},\cdots,q_{gg}]].    
\]
For each ring $R$, 
we write $R[[q
]]^{(g)}:=R[\,q_{ij}^{\pm 1}\,|\,1 \leq i < j \leq g\,][[q_{11},\cdots,q_{gg}]]$, 
which is a generalization of Serre's ring 
$R[[q
]]^{(1)}=R[[q]]$ consisting of all formal $q$-expansions with coefficients in $R$. 
In particular, if $F\in \mathscr{M}_{\kappa}(
\Gamma_1(N)
)^{(g)}$ is a Hecke eigenform (i.e. a simultaneous eigenfunction of all Hecke operators with similitude prime to $N$), then it is well-known that 
the field $K_F$ obtained by adjoining all Fourier coefficients of $F$ to $\mathbb{Q}$ is a totally real algebraic field of finite degree, to which we refer as the {\it Hecke field} of $F$.  
Hence we have \[
F \in K_F[[q
]]^{(g)} \subset \overline{\mathbb{Q}}
[[q
]]^{(g)} \hookrightarrow \overline{\mathbb{Q}}_p
[[q
]]^{(g)}. 
\]
For further details on Siegel modular forms set out above, see \cite{A-Z95} or \cite{Fre83}. 
\end{notation}

\section{Preliminaries}


\subsection{Classical Duke-Imamo{\={g}}lu lifting}

In this subsection, we review Ikeda's construction of the Duke-Imamo{\={g}}lu lifting for elliptic cusp forms (cf. \cite{Ike01}) and Kohnen's description of its Fourier expansion (cf. \cite{Koh02}). 
 
\vspace*{2.5mm}
To begin with, we recall some basic facts on elliptic modular forms of half-integral weight which were initiated by Shimura. 
For each $M 
\in \Gamma_0(4) \subset {\rm SL}_2(\mathbb{Z})$ and $z \in \mathfrak{H}_1$, put 
\[
j(M,z):={\theta_{1/2}(M(z))\over \theta_{1/2}(z)}, 
\]
where $\theta_{1/2}(z)=\sum_{m \in \mathbb{Z}} \mathbf{e}({m^2}z)$ is the standard theta function. It is well-known that $j(M,z)$ for $M \in \Gamma_0(4)$ satisfies a usual 1-cocycle relation, and hence defines a factor of automorphy. 
%
Then for each integers $k,\,N\ge1$, a 
$\mathbb{C}$-valued holomorphic function $h$ on $\mathfrak{H}_1$ 
is called a 
modular form of weight $k+1/2$ on $\Gamma_0(4N)$ if it 
satisfies the following conditions similar to those in the integral weight case: 
\begin{center}
\vspace*{-3mm}
\begin{minipage}[t]{0.975\textwidth}
\begin{enumerate}[\upshape (i)]

\item $(h|_{k+1/2}M) (z):=j(M,z)^{-2k-1} h(M(z))=h(z)$
for any $M \in \Gamma_0(4N)$; 

\item \vspace*{0.5mm}For each $M \in {\rm SL}_2(\mathbb{Z})$, the form $h|_{k+1/2}M$ has a Fourier expansion
\[
(h|_{k+1/2}M)(z)=\!\displaystyle\sum_{\scriptstyle m=0 
}^{\infty} c_m (h|_{k+1/2}M)\, q^m,  
\]
where $q=
e(z)$.  
\end{enumerate}
\end{minipage}
\end{center}
In particular, a modular form $h$ is said to be cuspidal (or a cusp form) 
if it satisfies the condition $c_0(h|_{k+1/2}M)=0$ for all $M \in {\rm SL}_2(\mathbb{Z})$. 
We denote by $\mathscr{M}_{k+1/2}(\Gamma_0(4N))^{(1)}$ and 
$\mathscr{S}_{k+1/2}(\Gamma_0(4N))^{(1)}$ 
the space consisting of all modular forms of weight $k+1/2$ on $\Gamma_0(4N)$ and its cuspidal subspace, respectively. 

\vspace*{2.5mm}
As one of the most significant properties of such forms of half-integral weight, Shimura \cite{Shim73} established that there exists a Hecke equivariant linear correspondence between $\mathscr{M}_{k+1/2}(\Gamma_0(4N))^{(1)}$ and $\mathscr{M}_{2k}(\Gamma_0(N))^{(1)}$, to which we refer as the {\it Shimura correspondence}. 
More precisely, Kohnen introduced 
the plus spaces $\mathscr{M}_{k+1/2}^{+}(\Gamma_0(4N))^{(1)}$ 
and $\mathscr{S}_{k+1/2}^{+}(\Gamma_0(4N))^{(1)}$ respectively to be the subspaces of $\mathscr{M}_{k+1/2}(\Gamma_0(4N))^{(1)}$ and $\mathscr{S}_{k+1/2}(\Gamma_0(4N))^{(1)}$ consisting of all forms $h$ with  
\[
c_m(h)=0 \textrm{ unless }(-1)^k m \equiv 0 \textrm{ or }1 \pmod{4}, 
\]
and showed that if either $k \ge 2$ or $k=1$ and $N$ is cubefree, 
the Shimura correspondence gives the diagram of linear isomorphisms  
\[
\begin{array}{ccc}
\mathscr{M}_{k+1/2}^{+}(\Gamma_0(4N))^{(1)} &\stackrel{\simeq}{\rightarrow}& \mathscr{M}_{2k}(\Gamma_0(N))^{(1)} \\
\cup & {} & \cup \\
\mathscr{S}_{k+1/2}^{+}(\Gamma_0(4N))^{(1)} & \stackrel{\simeq}{\rightarrow} &\mathscr{S}_{2k}(\Gamma_0(N))^{(1)}, 
\end{array}
\]
which is commutative with the actions of Hecke operators (cf. \cite{Koh80, Koh81, Koh82}). 
When $N=1$, 
the Shimura correspondence can be characterized explicitly in terms of Fourier expansions as follows: 
If $f=\sum_{m=1}^{\infty} a_m(f) q^m \in \mathscr{S}_{2k}(\mathrm{SL}_2(\mathbb{Z}))$ is a Hecke eigenform normalized as $a_1(f)=1$, and 
\[
h=\sum_{\scriptstyle m \ge 1, \atop {\scriptstyle (-1)^k m \equiv 0,1 \!\!\!\!\pmod{4}}} c_m(h) q^m \in \mathscr{S}_{k+1/2}^{+}(\Gamma_0(4))^{(1)}
\] 
corresponds to $f$ via the Shimura correspondence, then for each fundamental discriminant $\mathfrak{d}$ (i.e. 
$\mathfrak{d}$ is either $1$ or the discriminant of a quadratic field) with $(-1)^k \mathfrak{d} >0$, and $1 \le m \in \mathbb{Z}$, we have 
\begin{equation}
c_{
|\mathfrak{d}| m^2}(h)=c_{
|\mathfrak{d}|}(h) \sum_{d | m} \mu(d) \left({\,\mathfrak{d}\, \over d}\right) d^{k-1} a_{(m/d)}(f), 
\end{equation}
where $\mu(d)$ is the M{\"o}bius function, and $\left({\,\mathfrak{d}\, \over d}\right)$ the Kronecker character corresponding to $\mathfrak{d}$. We note that the inverse correspondence of the Shimura correspondence is 
determined uniquely up to scalar multiplication. It is because unlike the integral weight case, there is no canonical normalization of half-integral weight forms. 
Hence we should choose a suitable normalization of it in accordance with the intended use. 

\begin{rem}
As will be explained more precisely in \S\S 3.2 below, for integers $N \ge1$, $k \ge 2$ and a fundamental discriminant $\mathfrak{d}$ with $(-1)^k \mathfrak{d} >0$, Shintani \cite{Shin75} and Kohnen \cite{Koh81} constructed a theta lifting 
\[
\vartheta_{\mathfrak{d}} : \mathscr{S}_{2k}(\Gamma_0(N))^{(1)} \longrightarrow \mathscr{S}_{k+1/2}^{+}(\Gamma_0(4N))^{(1)},
\]
which 
gives an inverse correspondence of the Shimura correspondence admitting an algebraic normalization 
with respect to $(-1)^k \mathfrak{d}$. 
\end{rem}

\vspace*{2mm}On the other hand, for each integers $n$, $k \ge 1$ with $k> n+1$ and $n \equiv k \pmod{2}$, 
we define the ({\it holomorphic}) {\it Siegel Eisenstein series} of weight $k+n$ on ${\rm Sp}_{4n}(\mathbb{Z})$ as follows: 
for each $Z \in \mathfrak{H}_{2n}$, put 
\begin{eqnarray*}
E_{k+n}^{(2n)}(Z)&:=& 2^{-n} \zeta(1-k-n) \prod_{i=1}^{n} \zeta(1-2k-2n+2i) \\
&{}& \times \sum_{M=\left(\begin{smallmatrix}
* & * \\
C & D
\end{smallmatrix}\right) \in {\rm P}_{4n} \cap {\rm Sp}_{4n}(\mathbb{Z}) \backslash {\rm Sp}_4(\mathbb{Z})} \det(CZ+D)^{-k-n}.
\end{eqnarray*}
It is well-known that $E_{k+n}^{(2n)}$ is a non-cuspidal Hecke eigenform in $\mathscr{M}_{k+n}({\rm Sp}_{4n}(\mathbb{Z}))$. 
In addition, for each $T \in {\rm Sym}_{2n}^{*}(\mathbb{Z})$ with $T>0$, 
we decompose the associated discriminant $\mathfrak{D}_T:=(-1)^n \det(2T)$ 
into the form 
$$\mathfrak{D}_T = \mathfrak{d}_T\,\mathfrak{f}_T^2$$ with the fundamental discriminant 
$\mathfrak{d}_T$ corresponding to the quadratic field extension $\mathbb{Q}(\sqrt{\mathfrak{D}_T})/\mathbb{Q}
$ and an integer $\mathfrak{f}_T \ge 1$. 
Then the 
Fourier coefficient $A_T(E_{k+n}^{(2n)})$ 
is taken of the following form: 
\begin{equation}
A_T(E_{k+n}^{(2n)})=
L(1-k,\left({\mathfrak{d}_T \over *}\right)) 
\displaystyle\prod_{l | \mathfrak{f}_T } 
F_l(T;\, l^{k-n-1}
 ), 
\end{equation}
where $L(s,\left({\mathfrak{d}_T \over *}\right))
:=\sum_{m =1}^{\infty}\left({\mathfrak{d}_T \over m}\right)m^{-s}
$, 
and for each prime $l$, 
$F_l(T;\, X)=F_l^{(2n)}(T;\, X)
$ denotes the polynomial in one variable $X$ with coefficients in $\mathbb{Z}$ appearing in the factorization of the formal power series
\[
b_l(T;\,X)=b_l^{(2n)}(T;\,X):=
\sum_{R \in {\rm Sym}_{2n}(\mathbb{Q}_l)/{\rm Sym}_{2n}(\mathbb{Z}_l)} e_l({\rm trace}(TR)) 
X^{v_l(\nu_R)},
\]
where $\nu_R=[\mathbb{Z}_l^{2n} + \mathbb{Z}_l^{2n} R : \mathbb{Z}_l^{2n}]$, that is, 
\begin{equation}
b_l(T;\,X)
=
\displaystyle{(1-X)
\prod_{i=1}^{n} (1-l^{2i} X^2) \over 1- \left({\mathfrak{d}_T \over l}\right)l^n X}\, F_l(T;\,X)
\end{equation}
(cf. \cite{Shim73, Shim94b, 
Kit84,
Fei86}). 
%
Moreover, it is known that 
$F_l(T;\,X)$ is the polynomial of degree $2v_l(\mathfrak{f}_T)$ with $F_l(T;\,0)=1$ 
and satisfies the functional equation 
\begin{equation}
F_l(T;\, l^{-2n-1} X^{-1})=(l^{2n+1} X^2)^{- v_l(\mathfrak{f}_T)} F_l(T;\, X)
\end{equation}
(cf. \cite{Kat99}). 
In particular, we have $F_l(T;\,X)=1$ if $v_l(\mathfrak{f}_T)=0$. 
We easily see that $F_l(uT;\, X)=F_l(T;\,X)$ for each $u \in \mathbb{Z}_l^{\times}$. 

\begin{rem}
For each prime $l$, and for each nondegenerate $T \in {\rm Sym}_{2n}(\mathbb{Z}_l)$, 
the formal power series $b_l(T;\,X)$ gives rise to the local Siegel series $b_l(T;\,s):=b_l(T;\,l^{-s})$ for $s \in \mathbb{C}$ with ${\rm Re}(s)>0$. In particular, for each even integer $\kappa 
 > 2n+1$, then the value 
$b_l(T;\,\kappa)$ coincides with the local density 
\begin{eqnarray*}
\lefteqn{
\alpha_l(T,\,H_{2\kappa}):=\lim_{r \to \infty} (l^r)^{-4n\kappa+n(2n+1)}
} \\
&& \times \# \!\left\{\,
U \in {\rm Mat}_{2\kappa \times 2n}(\mathbb{Z}_l/l^r \mathbb{Z}_l)
\, |\, 
{}^t U H_{2\kappa} U - T
\in l^r {\rm Sym}_{2n}^{*}(\mathbb{Z}_l) \,
\right\}, 
\end{eqnarray*}
where $H_{2\kappa}={1 \over \,2\,} \left(\begin{smallmatrix}
0_{\kappa} & 1_{\kappa} \\
1_{\kappa} & 0_{\kappa}
\end{smallmatrix}
\right)$. In this connection, there have been numerous papers focusing on the local densities of quadratic forms. 
\end{rem}

\vspace*{2mm}
Following \cite{Ike01}, we construct the the Duke-Imamo{\={g}}lu lifting as follows: 

\begin{ikedathm}[Theorems 3.2 and 3.3 in \cite{Ike01}]
Suppose that $n$, $k$ are positive integers with $n \equiv k \pmod{2}$.  
Let $f=\sum_{m=1}^{\infty} a_m(f) q^m \in \mathscr{S}_{2k}(\mathrm{SL}_2(\mathbb{Z}))$ be a normalized Hecke eigenform, and $h=\sum_{m \ge 1} c_m(h) q^m \in \mathscr{S}_{k+1/2}^{+}(\Gamma_0(4))^{(1)}$ a corresponding Hecke eigenform as in $(2)$. 
Then for each $0 < T \in {\rm Sym}_{2n}^{*}(\mathbb{Z})$ with discriminant $\mathfrak{D}_T
=\mathfrak{d}_T \,\mathfrak{f}_T^2$, put  
\begin{equation}
A_{T}({\rm Lift}^{(2n)}(f)) := c_{
|\mathfrak{d}_T|
}(h) \prod_{l | \mathfrak{f}_T} 
\alpha_l(f)^{v_l(\mathfrak{f}_T)} F_l(T;\, \beta_l(f)l^{-k-n}), 
\end{equation}
where for each prime $l$, we denote by $(\alpha_l(f),\beta_l(f))$ the ordered pair of the roots of 
$
X^2 - a_l(f) X + l^{2k-1} =0 
$
with $v_l(\alpha_l(f)) \le v_l(\beta_l(f))$. 
Then the Fourier expansion 
\[
{\rm Lift}^{(2n)}(f):=\displaystyle\sum_{\scriptstyle T \in {\rm Sym}_{2n}^{*}(\mathbb{Z}), 
\atop {\scriptstyle T > 0}
} A_{T}({\rm Lift}^{(2n)}(f))\, q^{T} 
\]
gives rise to a Hecke eigenform in $\mathscr{S}_{k+n}({\rm Sp}_{4n}(\mathbb{Z}))$ such that 
\[
L(s,\, {\rm Lift}^{(2n)}(f),\,{\rm st})=\zeta(s)\prod_{i=i}^{2n}L(s+k+n-i,\, f).
\]
\end{ikedathm}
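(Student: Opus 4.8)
The plan is to isolate two logically independent claims: (a) that the formal series $\sum_{T>0} A_T({\rm Lift}^{(2n)}(f))\, q^T$, with coefficients defined by $(6)$, is the Fourier expansion of a holomorphic Siegel modular form of weight $k+n$ on ${\rm Sp}_{4n}(\mathbb{Z})$; and (b) that this form is a Hecke eigenform with the displayed standard $L$-function. Cuspidality is then free: since ${\rm Sp}_{4n}(\mathbb{Z})$ has a single cusp, the support condition $T>0$ built into $(6)$ forces the Siegel $\Phi$-operator to annihilate $F$. By far the larger task is (a), the modularity.

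Granting modularity, I would first dispose of the easy half of the transformation law. Both the discriminant data $\mathfrak{d}_T,\mathfrak{f}_T$ and each local polynomial $F_l(T;X)$ are invariants of the ${\rm GL}_{2n}(\mathbb{Z})$-equivalence class of $T$ (the former obviously, the latter directly from the definition of $b_l(T;X)$, which the excerpt records in the scalar case $F_l(uT;X)=F_l(T;X)$). Hence $A_{{}^tUTU}=A_T$ for $U \in {\rm GL}_{2n}(\mathbb{Z})$, and together with the obvious periodicity this yields invariance under the Siegel parabolic ${\rm P}_{4n}\cap {\rm Sp}_{4n}(\mathbb{Z})$. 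The genuine obstruction is invariance under the remaining generator of ${\rm Sp}_{4n}(\mathbb{Z})$, namely the involution $Z \mapsto -Z^{-1}$.

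To establish that, I would pass to the Fourier--Jacobi expansion, writing $Z=\left(\begin{smallmatrix} Z' & w \\ {}^tw & \tau \end{smallmatrix}\right)$ and expanding $F$ in powers of $e(\tau)$. By $(6)$ and the shape of $\mathfrak{D}_T$, each Fourier--Jacobi coefficient should be a Jacobi form of weight $k+n$ whose theta decomposition is controlled by a (vector-valued) elliptic form of half-integral weight; the weights match exactly, since the theta decomposition of a genus-$(2n-1)$ Jacobi form of weight $k+n$ produces components of weight $k+n-\tfrac{2n-1}{2}=k+\tfrac12$, which is the weight of $h$. The decisive local input is the functional equation $(5)$ for $F_l(T;X)$: it is precisely what makes these Fourier--Jacobi pieces transform correctly, exactly as it governs the Fourier coefficients $(3)$ of the Siegel Eisenstein series $E_{k+n}^{(2n)}$, which are built from the very same polynomials $F_l(T;X)$. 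Conceptually the lift is the Eisenstein series with its ``constant'' input $L(1-k,\left(\tfrac{\mathfrak{d}_T}{*}\right))$ replaced by the cuspidal input $c_{|\mathfrak{d}_T|}(h)$ and its trivial Satake data $l^{k-n-1}$ replaced by the arithmetic data $\alpha_l(f),\beta_l(f)$ of $f$; reconciling this substitution with modularity is the crux.

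The main obstacle, as I see it, is the global gluing: each Fourier--Jacobi coefficient is individually a Jacobi form, but one must show they cohere into a single Siegel form on the full group. I would attack this with Katsurada's explicit description of $F_l(T;X)$ (cf. \cite{Kat99}) and an induction on the genus through the Fourier--Jacobi tower, bottoming out at the classical Saito--Kurokawa case $n=1$. Once (a) is in hand, (b) is comparatively formal: the multiplicativity encoded in the product $(6)$, combined with the factorization $(4)$ relating $b_l(T;X)$ and $F_l(T;X)$, exhibits $F$ as a simultaneous eigenform for the full Hecke algebra; the Hecke eigenvalues are then extracted from the local factors $F_l(T;X)$ and identified with the Satake parameters $(\psi_0(l),\dots,\psi_{2n}(l))$ displayed in the introduction, and assembling their Euler product yields $L(s,{\rm Lift}^{(2n)}(f),{\rm st})=\zeta(s)\prod_{i=1}^{2n}L(s+k+n-i,f)$.
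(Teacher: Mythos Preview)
The paper does not prove this statement. Theorem~I is quoted as Theorems~3.2 and~3.3 of \cite{Ike01} and is used throughout as a black box; the present paper's contribution begins only after this result is in hand. There is therefore no proof here to compare your proposal against.

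For what it is worth, your sketch is broadly aligned with Ikeda's own argument in \cite{Ike01}: invariance under the Siegel parabolic is immediate for the reasons you give, the Fourier--Jacobi expansion is indeed the principal tool for the remaining generator, and the functional equation~$(5)$ together with the Eisenstein-series analogy is precisely Ikeda's guiding heuristic. One genuine divergence: Ikeda does not proceed by induction on the genus, nor does he bottom out at the Saito--Kurokawa case $n=1$. His argument treats all $n$ uniformly, showing directly that each Fourier--Jacobi coefficient is a Jacobi cusp form (this is where the half-integral-weight form $h$ and the functional equation enter) and then invoking analytic estimates on the $A_T$ to obtain convergence and holomorphy on $\mathfrak{H}_{2n}$. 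Your proposed inductive passage from genus $2n-2$ to $2n$ through the Fourier--Jacobi tower would require an additional mechanism that you have not supplied, and it is not the route taken in \cite{Ike01}.
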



\begin{rem}
For a given Hecke eigenform $F \in \mathscr{M}_{k+n}({\rm Sp}_{4n}(\mathbb{Z}))$ with the Satake parameter $(\psi_0(l),\psi_1(l), \cdots, \psi_{2n}(l)) \in (\mathbb{C}^{\times})^{2n+1}/W_{2n}$ 
for each prime $l$, then the spinor $L$-function $L(s,\, F,\,{\rm spin})$ and the standard $L$-function $
L(s,\, F,\,{\rm st})$ associated with $F$ are respectively defined as follows:
\begin{eqnarray*}
\lefteqn{
\hspace*{1.5mm}
L(s,\, F,\,{\rm spin})} \\
&:=&
\displaystyle\prod_{l < \infty} 
\left\{(1- \psi_0(l)l^{-s})\prod_{r=1}^{2n} \prod_{1 \le i_1 < \cdots < i_r \le 2n}
(1-\psi_0(l)\psi_{i_1\!}(l) \cdots \psi_{i_r\!}(l) l^{-s})\right\}^{-1}, 
\end{eqnarray*}
\begin{flushleft}
\hspace*{1.5mm}
$L(s,\,F,\,{\rm st})
:= \displaystyle\prod_{l < \infty}
\left\{ (1-l^{-s})\prod_{i=1}^{2n} (1-\psi_i(l) l^{-s})(1-\psi_i(l)^{-1} l^{-s})\right\}^{-1}$,
\end{flushleft}
Then 
it follows from the explicit form of 
$L(s,\,{\rm Lift}^{(2n)}(f),\,{\rm st})$ 
and 
the fundamental equation 
$\psi_0(l)^2 \psi_1(l) \cdots \psi_{2n}(l) = l^{2n(k+n)-n(2n+1)}$ 
that  
the Satake parameter of ${\rm Lift}^{(2n)}(f)$ is taken as 
\begin{equation}
{
\psi_i(l)=\left\{
\begin{array}{ll}
l^{nk-n(n+1)/2} & \textrm{if }i=0, \\[1.5mm]
\alpha_l(f) l^{-k+i} & \textrm{if }1 \le i \le n, \\[2mm]
\beta_l(f) l^{-k-n+i} & \textrm{if }n+1 \le i \le 2n. 
\end{array}\right.
}
\end{equation}
Hence the spinor $L$-function $L(s,\,{\rm Lift}^{(2n)}(f),\,{\rm spin})$ can be also written explicitly in terms of 
the symmetric power $L$-functions $L(s,\,f,\,{\rm sym}^{r})$ of $f$ with some $0 \le r \le n$ (cf. \cite{Mur02, Sch10}). 
\end{rem}

\vspace*{1mm}
According to the equation (3), we may formally look at the Siegel Eisenstein series $E_{k+n}^{(2n)}$ as the Duke-Imamo{\={g}}lu lifting of the normalized elliptic Eisenstein series \[
E_{2k}^{(1)}=
{\zeta(1-2k) \over 2}+ \sum_{m =1}^{\infty} \sigma_{2k-1}(m) q^m \in \mathscr{M}_{2k}({\rm SL}_2(\mathbb{Z})), 
\] 
where $\sigma_{2k-1}(m)=\displaystyle\sum_{\scriptstyle 0<d | m
} d^{2k-1}$. Indeed, 
\vspace*{-3mm}we easily see that for each prime $l$, 
$$(\alpha_l(E_{2k}^{(1)}), \beta_l(E_{2k}^{(1)}))=(1,l^{2k-1}),$$
and it is well-known that 
Cohen's Eisenstein series 
$H_{k+1/2} 
\in \mathscr{M}_{k+1/2}^{+}(\Gamma_0(4))^{(1)}$ corresponds to $E_{2k}^{(1)}$ via the Shimura correspondence, and possesses the Fourier coefficient 
$$c_{|\mathfrak{d}|}(H_{k+1/2})=L(1-k,\,\left({\,\mathfrak{d}\, \over *}\right))$$  
for each fundamental discriminant $\mathfrak{d}$ with $(-1)^k \mathfrak{d}>0$ (cf. \cite{Coh75, E-Z85}). 

\vspace*{3mm}
We also note that the Duke-Imamo{\={g}}lu lifting 
does not vanish identically. Indeed, for each $0 < T \in {\rm Sym}_{2n}^{*}(\mathbb{Z})$ with $\mathfrak{D}_T
=\mathfrak{d}_T$ (i.e. $\mathfrak{f}_T=1$), the equation (6) yields the equation 
\[
A_T({\rm Lift}^{(2n)}(f))=c_{|\mathfrak{d}_T|}(h). 
\] 
Hence the non-vanishing of $A_T({\rm Lift}^{(2n)}(f))$ is guaranteed by the well-studied non-vanishing theorem for 
Fourier coefficients of $h$ as follows: 
\begin{lem}
For each integer $k \ge 6$, let $f \in \mathscr{S}_{2k}({\rm SL}_2(\mathbb{Z}))$ 
and $h \in \mathscr{S}_{k+1/2}^{+}(\Gamma_0(4))^{(1)}$ be taken as above. 
Then there exists a fundamental discriminant $\mathfrak{d}$ 
such that $(-1)^k \mathfrak{d}>0$ and 
\[c_{
|\mathfrak{d}|}(h) \ne 0. \] 
Moreover, for each prime $p$, a similar statement remains valid under the additional condition either $\mathfrak{d} \equiv 0 \pmod{p}$ or $\mathfrak{d} \not\equiv 0 \pmod{p}$. 
\end{lem}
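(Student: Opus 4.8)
The plan is to reduce the statement to the non-vanishing of quadratic-twist central $L$-values and to exploit the Shimura--Kohnen relation $(2)$, which already determines all Fourier coefficients of $h$ from those indexed by fundamental discriminants. Throughout I write $\chi_{\mathfrak{d}}=\left(\frac{\mathfrak{d}}{*}\right)$ for the Kronecker character attached to a fundamental discriminant $\mathfrak{d}$.

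First I would dispose of the basic non-vanishing by a soft argument. Since $k\ge 6$ we have $\mathscr{S}_{2k}(\mathrm{SL}_2(\mathbb{Z}))\neq 0$, so $f\neq 0$, and as the Shimura correspondence restricted to the plus space is an isomorphism, the corresponding form $h\in\mathscr{S}^{+}_{k+1/2}(\Gamma_0(4))^{(1)}$ is nonzero. Now every positive integer $n$ with $(-1)^k n\equiv 0,1\pmod 4$ factors uniquely as $n=|\mathfrak{d}|\,m^2$, where $\mathfrak{d}$ is the fundamental discriminant of $\mathbb{Q}(\sqrt{(-1)^k n})$ (so that $(-1)^k\mathfrak{d}>0$) and $m\ge 1$; for $n$ outside this range $c_n(h)=0$ because $h$ lies in the plus space. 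By $(2)$, each such $c_n(h)$ is a fixed $\mathbb{C}$-multiple of $c_{|\mathfrak{d}|}(h)$. Hence if $c_{|\mathfrak{d}|}(h)$ vanished for every fundamental discriminant $\mathfrak{d}$ with $(-1)^k\mathfrak{d}>0$, then all Fourier coefficients of $h$ would vanish, forcing $h=0$, a contradiction. This already yields one $\mathfrak{d}$ with $c_{|\mathfrak{d}|}(h)\neq 0$.

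For the refined assertion with a prescribed condition at $p$ I would pass to central $L$-values. The Kohnen--Zagier formula gives, for fundamental discriminants $\mathfrak{d}$ with $(-1)^k\mathfrak{d}>0$,
\[
\frac{|c_{|\mathfrak{d}|}(h)|^2}{\langle h,h\rangle}=c_k\,|\mathfrak{d}|^{\,k-1/2}\,\frac{L(k,f\otimes\chi_{\mathfrak{d}})}{\langle f,f\rangle},
\]
with an explicit nonzero constant $c_k$, so that $c_{|\mathfrak{d}|}(h)\neq 0$ if and only if the central value $L(k,f\otimes\chi_{\mathfrak{d}})\neq 0$. Since $f$ has level $1$ and weight $2k$, the sign of the functional equation of $L(s,f\otimes\chi_{\mathfrak{d}})$ equals $(-1)^k\chi_{\mathfrak{d}}(-1)=(-1)^k\,\mathrm{sgn}(\mathfrak{d})$, which is $+1$ precisely under the hypothesis $(-1)^k\mathfrak{d}>0$, independently of whether $p\mid\mathfrak{d}$ or $p\nmid\mathfrak{d}$ (the twisting character is primitive modulo $|\mathfrak{d}|$ and coprime to the level in either case). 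Thus both families
\[
\{\mathfrak{d}:(-1)^k\mathfrak{d}>0,\ p\mid\mathfrak{d}\}\quad\text{and}\quad\{\mathfrak{d}:(-1)^k\mathfrak{d}>0,\ p\nmid\mathfrak{d}\}
\]
consist of twists with root number $+1$, so in neither family is the central value forced to vanish. I would then invoke an analytic non-vanishing theorem for quadratic twists that permits fixing local conditions at finitely many primes (Waldspurger, Friedberg--Hoffstein, Ono--Skinner): each of the two families contains infinitely many $\mathfrak{d}$ with $L(k,f\otimes\chi_{\mathfrak{d}})\neq 0$, hence with $c_{|\mathfrak{d}|}(h)\neq 0$. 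Selecting $\mathfrak{d}$ in the appropriate family settles both the case $\mathfrak{d}\equiv 0$ and the case $\mathfrak{d}\not\equiv 0\pmod p$.

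The main obstacle is precisely this last input. The basic non-vanishing is purely formal, but securing a twist with the prescribed divisibility behaviour at $p$ requires the analytic non-vanishing of $L(k,f\otimes\chi_{\mathfrak{d}})$ within a single congruence class at $p$. One must confirm that the chosen local condition at $p$ is compatible with global root number $+1$ (verified above) and then apply the non-vanishing theorem in its form with a fixed local constraint; this is where the genuine analytic difficulty, as opposed to the formal reduction via $(2)$ and the Kohnen--Zagier formula, resides.
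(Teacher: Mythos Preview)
Your proof is correct and follows the same approach as the paper: reduce $c_{|\mathfrak{d}|}(h)\neq 0$ to $L_{\mathfrak{d}}(k,f)\neq 0$ via the Kohnen--Zagier formula, then invoke the analytic non-vanishing theorems for quadratic twists (the paper cites Bump--Friedberg--Hoffstein and Waldspurger). Your additional ``soft'' argument for the unrefined statement---that vanishing of all fundamental-discriminant coefficients forces $h=0$ by equation~(2)---is a pleasant elementary observation not present in the paper, but the substantive part of the lemma (the version with a local constraint at $p$) is handled identically.
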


\begin{proof}
For each fundamental discriminant $\mathfrak{d}$ with $(-1)^k \mathfrak{d} >0$, 
Kohnen-Zagier \cite{K-Z81} established the equation  
\begin{equation}
{
c_{|\mathfrak{d}|}(h)^2 \over 
\|h\|^2 }
={(k-1)! \over \pi^k} |\mathfrak{d}|^{k-1/2}{L_{\mathfrak{d}}(k,\,f) \over 
\|f\|^2}, 
\end{equation}
where $L_{\mathfrak{d}}(s,f):=\sum_{m=1}^{\infty} \left({\,\mathfrak{d}\, \over m}\right) a_m(f) m^{-s}$, and we denote by $\|f\|^2$ and $\|h\|^2 $ the Petersson norms square of $f$ and $h$ respectively, that is,  
\begin{eqnarray*}
\|f\|^2&=\langle f,\,f \rangle:=&\displaystyle\int_{{\rm SL}_2(\mathbb{Z}) \backslash \mathfrak{H}_1} |f(x+\sqrt{-1}y)|^2 y^{2k-2}dxdy, \\ 
\|h\|^2&=\langle h,\,h \rangle:=&\displaystyle{1 \over \,6\,}\int_{\Gamma_0(4) \backslash \mathfrak{H}_1} |h(x+\sqrt{-1}y)|^2 y^{k-3/2}dxdy.
\end{eqnarray*}
Hence the existence of a fundamental discriminant $\mathfrak{d}$ 
with desired properties 
follows immediately from the non-vanishing theorem for $L_{\mathfrak{d}_T}(k,\,f)$ (cf. 
\cite{BFH90,Wal91}). We complete the proof. 
\end{proof}
 
\vspace*{3mm}
For the convenience in the sequel, we describe the Fourier expansion of ${\rm Lift}^{(2n)}(f)$ a little more precisely. 
For each prime $l$ dividing $\mathfrak{f}_T$, by virtue of the functional equation (5), we have 
\[
\alpha_l(f)^{v_l(\mathfrak{f}_T)} F_l(T;\, \beta_l(f)l^{-k-n})
=\beta_l(f)^{v_l(\mathfrak{f}_T)} F_l(T;\, \alpha_l(f)l^{-k-n}), 
\]
and hence $\alpha_l(f)^{v_l(\mathfrak{f}_T)} F_l(T;\,\beta_l(f)l^{-k-n})$ can be written in terms of $\alpha_l(f)+\beta_l(f)=a_l(f)$ and $l^{-k}\alpha_l(f)\beta_l(f)=l^{k-1}$. In fact, Kohnen showed 
that for each 
$l$, we have 
\begin{equation}
\alpha_l(f)^{v_l(\mathfrak{f}_T)} F_l(T;\, \beta_l(f)l^{-k-n})
=\sum_{i=0}^{v_l(\mathfrak{f}_T)} \phi_T(l^{v_l(\mathfrak{f}_T)-i}) 
(l^{k-1})^{v_l(\mathfrak{f}_T)-i} a_{l^{i}}(f), 
\end{equation}
with some arithmetic function $\phi_T(d)$ with values in $\mathbb{Z}$ defined for each integer $d\ge1$ dividing $\mathfrak{f}_T$ (cf. \cite{Koh02}). Hence we obtain another explicit form 
\[
A_{T}({\rm Lift}^{(2n)}(f)) = c_{|\mathfrak{d}_T|}(h) \prod_{l | \mathfrak{f}_T} 
\sum_{i=0}^{v_l(\mathfrak{f}_T)} \phi_T(l^{v_l(\mathfrak{f}_T)-i}) 
(l^{k-1})^{v_l(\mathfrak{f}_T)-i} a_{l^{i}}(f). 
\]
We will make use of this equation as well as (6) in the sequel. 

\begin{rem}
For a given $f \in \mathscr{S}_{2k}({\rm SL}_{2}(\mathbb{Z}))$, 
Ikeda's construction of 
${\rm Lift}^{(2n)}(f)$ 
obviously depends on the choice of 
$h \in \mathscr{S}_{k+1/2}^{+}(\Gamma_0(4))^{(1)}$. 
However, by combining the equation (2) with Kohnen's refinement 
of the Fourier expansion of ${\rm Lift}^{(2n)}(f)$, 
we may realize the Duke-Imamo{\={g}}lu lifting 
as an explicit linear mapping $\mathscr{S}_{k+1/2}^{+}(\Gamma_0(4))^{(1)} \to \mathscr{S}_{k+n}({\rm Sp}_{4n}(\mathbb{Z}))$. 
Moreover, Kohnen-Kojima \cite{K-Ko05} and Yamana \cite{Yam10} characterized the image of the mapping in terms of a relation between Fourier coefficients, 
which can be regarded as a generalization of Maass' characterization of the Saito-Kurokawa lifting 
${\rm Lift}^{(2)}(f)$. 
\end{rem}

\subsection{$\Lambda$-adic Siegel modular forms}
In this subsection, we introduce the notion of $\Lambda$-adic Siegel modular forms of arbitrary genus $g \ge 1$ 
from point of view of Fourier expansions. 

\vspace*{2mm}
Let $\varGamma=1+p\mathbb{Z}_p$ be the maximal torsion-free subgroup of $\mathbb{Z}_p^{\times}$. We choose and fix a topological generator $\gamma \in \varGamma$ such that $\varGamma=\gamma^{\mathbb{Z}_p}$. 
Let $\Lambda=\mathbb{Z}_p[[\varGamma]]$ and $\Lambda_1=\mathbb{Z}_p[[\mathbb{Z}_p^{\times}]]$ be the completed group rings on $\varGamma$ and on $\mathbb{Z}_p^{\times}$ over $\mathbb{Z}_p$, respectively. We easily see that $\Lambda_1$ has a natural $\Lambda$-algebra structure induced from 
the 
isomorphism $\Lambda_1 \simeq \Lambda[
\mu_{p-1}
]$, where $\mu_{p-1}$ denotes the maximal torsion subgroup consisting of all $(p-1)$-th roots of unity. 

\begin{rem}
As is well-known, $\Lambda$ is isomorphic to the power series ring $\mathbb{Z}_p[[X]]$ in one variable $X$ with coefficients in $\mathbb{Z}_p$ under 
$\gamma
\mapsto 1+X$. 
In addition, 
$\mathbb{Z}_p[[X]]
$ is isomorphic to the ring ${\rm Dist}(\mathbb{Z}_p,\mathbb{Z}_p)$ consisting of all distributions on $\mathbb{Z}_p$ with values in $\mathbb{Z}_p$. 
Indeed, every distribution $\mu \in {\rm Dist}(\mathbb{Z}_p,\mathbb{Z}_p)$ corresponds to  
the power series
\[
A_{\mu}(X) =\int_{\mathbb{Z}_p} (1+X)^x d\mu(x) = \sum_{m=0}^{\infty} \int_{\mathbb{Z}_p} 
\binom{x}{m} 
d\mu(x) X^m \in \mathbb{Z}_p[[X]],
\]
where $\displaystyle\binom{x}{m}$ is the binomial function. 
Therefore we obtain \vspace*{-2mm}
\[
\Lambda \simeq \mathbb{Z}_p[[X]] \simeq {\rm Dist}(\mathbb{Z}_p,\mathbb{Z}_p), 
\]
which allows us to consider the definition of $\Lambda$-adic Siegel modular forms below from a different point of view.  
\end{rem}

To begin with, 
we introduce the $\Lambda$-adic analytic spaces as an alternative notion for the weights of holomorphic Siegel modular forms in the following: 

\begin{defn}[$\Lambda$-adic analytic spaces]
For each $\Lambda_1$-algebra $R$ finite flat over $\Lambda$, we define the 
{\it $\Lambda$-adic analytic space} $\mathfrak{X}(R)$ associated with $R$ as 
\[
\mathfrak{X}(R):={\rm Hom}_{\rm cont}(R, \overline{\mathbb{Q}}_p), 
\]
on which the following arithmetic data are introduced: 
\begin{itemize}
\item[(i)]
A point $P \in \mathfrak{X}(R)$ is said to be {\it arithmetic} if 
there exists an integer $\kappa \geq 2$ such that the restriction of $P$ into $\mathfrak{X}(\Lambda) 
:={\rm Hom}_{\rm cont}(\Lambda, \overline{\mathbb{Q}}_p)
\simeq {\rm Hom}_{\rm cont}(\varGamma, \overline{\mathbb{Q}}_p^{\times})
$ corresponds to a continuous character $
P_{\kappa} : \varGamma \to \overline{\mathbb{Q}}_p^{\times}$ satisfying
$
P_{\kappa}(\gamma) = \gamma^{\kappa}.
$
We denote by $\mathfrak{X}_{\rm alg}(R)$ the set consisting of all arithmetic points in $\mathfrak{X}(R)$. 
\item[(ii)]
An arithmetic 
point $P \in \mathfrak{X}_{\rm alg}(R)$ is said to be of {\it signature} $(\kappa,\,\varepsilon)$ if 
there exist an integer $\kappa \geq 2$ and a finite character $\varepsilon : \mathbb{Z}_p^{\times} \to \overline{\mathbb{Q}}_p^{\times}$ such that $P$ lies over 
the point $P_{\kappa,\varepsilon} \in \mathfrak{X}_{\rm alg}(\Lambda_1)\simeq {\rm Hom}_{\rm cont}(\mathbb{Z}_p^{\times}, \overline{\mathbb{Q}}_p^{\times})$ corresponding to the character 
$P_{\kappa,\varepsilon}(y)= y^{\kappa}\varepsilon(y)$ on $\mathbb{Z}_p^{\times}$. 
For simplicity, we denote such $P$ by $P=(\kappa, \varepsilon)$ and often refer to it as the arithmetic point of {\it weight} $\kappa$ with {\it Nebentypus} $\varepsilon \omega^{-\kappa}$ 
in the sequel. 
\end{itemize}
\end{defn}

We note that $\mathfrak{X}(\Lambda)$ has a natural analytic structure induced from the identification ${{\rm Hom}_{\rm cont}(\Lambda, \overline{\mathbb{Q}}_p)
\simeq {\rm Hom}_{\rm cont}(\varGamma, \overline{\mathbb{Q}}_p^{\times})}$. Moreover, 
for a given $R$, 
restrictions 
to $\Lambda_1$ and then to $\Lambda$
induce a surjective finite-to-one mapping 
\[
\pi : \mathfrak{X}(R) \twoheadrightarrow \mathfrak{X}(\Lambda_1) \twoheadrightarrow \mathfrak{X}(\Lambda), 
\]
which allows us to define some analytic charts around all points of $\mathfrak{X}_{\rm alg}(R)$. 
Indeed, it is established by Hida 
that each $P \in \mathfrak{X}_{\rm alg}(R)$ is unramified over $\mathfrak{X}(\Lambda)$, and consequently 
there exists a natural local section of $\pi$ 
\[
S_{P}: U_{P} \subseteq \mathfrak{X}(\Lambda) \to \mathfrak{X}(R)
\]
defined on a neighborhood $U_{P}$ of $
\pi(P)
$ such that $S_{P}(
\pi(P))={P}$. These local sections 
endow $\mathfrak{X}(R)$ with analytic charts around points in $\mathfrak{X}_{\rm alg}(R)$. For each $P \in \mathfrak{X}_{\rm alg}(R)$, a function ${{\bf f} : \mathfrak{U} \subseteq \mathfrak{X}(R) \to \overline{\mathbb{Q}}_p}$ defined on $\mathfrak{U}=S_{P}(U_{P})$ is said to be {\it analytic} if ${{\bf f} \circ S_{P} : U_{P} 
\to \overline{\mathbb{Q}}_p}$ is analytic. 
In parallel, an open subset $\mathfrak{U} \subseteq \mathfrak{X}(R)$ containing some $P \in \mathfrak{X}_{\rm alg}(R)$ is called an {\it analytic neighborhood} of $P$ if $\mathfrak{U}=S_{P}(U_{P})$. 
For instance, we easily see that each element ${\bf a} \in R$ gives rise to an analytic function 
${\bf a} : \mathfrak{X}_{\rm alg}(R)\to \overline{\mathbb{Q}}_p$ defined by $\mathbf{a}(P)=P(\mathbf{a})$. 
In most generality, if $P \in \mathfrak{X}(R)$ is unramified over $\mathfrak{X}(\Lambda)$, then each 
element ${\bf a} \in R_{(P)}$ gives rise to an analytic function defined on some analytic neighborhood of $P$, where $R_{(P)}$ denotes the localization of $R$ at $P$, and gives rise to a discrete valuation ring finite and 
unramified over $\Lambda$ (cf. Corollary 1.4 in \cite{Hid86a}). 
According to the custom, 
we refer to the evaluation $\mathbf{a}(P)$ at $P \in \mathfrak{X}_{\rm alg}(R)$ 
as the {\it specialization} of $\mathbf{a}$ at $P$. 

\vspace*{3mm}
Following \cite{G-S93, Hid93} and \cite{Pan00}, we define the $\Lambda$-adic Siegel modular forms 
as follows: 

\begin{defn}[$\Lambda$-adic Siegel modular forms]
Let $R$ be a $\Lambda_1$-algebra finite flat over $\Lambda$.  
%
For each integer $g \ge 1$, 
pick $P_0 =({\kappa_0},\,\omega^{{\kappa_0}})\in \mathfrak{X}_{\rm alg}(R)$ with ${\kappa_0}> g+1$. 
A 
formal Fourier expansion 
\[
\mathbf{F} = \displaystyle\sum_{\scriptstyle T \in {\rm Sym}_g^{*}(\mathbb{Z}), 
\atop {\scriptstyle T \ge 0}
}
\mathbf{a}_{T}\,q^{T} \in R_{(P_0)}
[[q]]^{(g)}
\]
is called a {\it $\Lambda$-adic Siegel modular form} of genus $g$ and of level $1$ 
if there exists an analytic neighborhood $\mathfrak{U}_0$ of $P_0$ such that 
for each arithmetic point 
$P =(\kappa, \omega^{\kappa})\in \mathfrak{U}_0
$ 
with $\kappa \geq {\kappa_0}
$, 
the specialization 
\[
\mathbf{F}({P}) := \sum_{T} \mathbf{a}_{T}(P)
 \, q^{T}
 \in \overline{\mathbb{Q}}_p[[q]]^{(g)} 
\]
gives rise to the Fourier expansion of 
a holomorphic Siegel modular form in
$\mathscr{M}_{\kappa}(\Gamma_0(p))^{(g)}$. 
In particular, a $\Lambda$-adic Siegel modular form $\mathbf{F}$ 
is said to be {\it cuspidal} (or a {\it cusp form}\,) if $\mathbf{F}(P) \in \mathscr{S}_{\kappa}(\Gamma_0(p))^{(g)}$ 
for almost all $P \in \mathfrak{U}_0$. 
\end{defn}

If there exists a $\Lambda$-adic Siegel modular form ${\bf F} \in R_{(P_0)}[[q]]^{(g)}$, then every coefficient ${\bf a}_{T} \in R_{(P_0)}$ of ${\bf F}$ gives rise to an analytic function defined on $\mathfrak{U}_0
$. Hence every specialization ${\bf F}(P)$ gives a holomorphic Siegel modular form whose Fourier coefficients are $p$-adic analytic functions on $\mathfrak{U}_0$. In this context, we mean by a {\it $p$-adic analytic family}  
the infinite collection 
of holomorphic Siegel modular forms $\{{\bf F}(P) \in \mathscr{M}_{\kappa}(\Gamma_0(p))^{(g)}\}$ 
parametrized by varying arithmetic points $P =(\kappa, \omega^{\kappa})\in \mathfrak{U}_0$. 
In addition, by identifying such $P \in \mathfrak{U}_0$ 
with the element $(\kappa, \kappa\!\!\pmod{p-1})$ 
in Serre's $p$-adic weight space 
\[
\mathbb{Z}_p \times \mathbb{Z}/(p-1)\mathbb{Z} \simeq \varprojlim_{m \ge 1} \mathbb{Z}/(p-1)p^{m-1}\mathbb{Z},
\]
we may also regard 
$\{{\bf F}(P)\}$ as a usual $p$-adic analytic family 
parametrized by varying integral weights $\kappa \ge \kappa_0
$ with 
${\kappa \equiv {\kappa_0} \pmod{(p-1)p^{m-1}}}$ 
for some sufficiently large $m \ge 1$.

\begin{rem}
On purpose to 
construct a non-zero $\Lambda$-adic Siegel modular form ${\bf F}
$, we should take a $P_0=({\kappa_0}, \omega^{{\kappa_0}}) \in \mathfrak{X}_{\rm alg}(R)
$ having a smallest possible ${\kappa_0} \in \mathbb{Z}
$ such that 
${\bf F}(P_0)
$ coincides with an actual holomorphic Siegel modular form $F_{\kappa_0} \in \mathscr{M}_{{\kappa_0}}(\Gamma_0(p))^{(g)}$. 
For this reason, the condition ${\kappa_0} > g+1$ will be practically required 
in the subsequent arguments for the Duke-Imamo{\=g}lu lifting and the holomorphic Siegel Eisenstein series. Indeed, this is neither more nor less than the condition of holomorphy of the Siegel Eisenstein series of genus $g$.  
However, even in the same context, it should better to assume a more general condition $\kappa_0 \ge g+1$, which is evident form the fact that the smallest possible weight for holomorphic Siegel modular forms of genus $g$ occurring in the de Rham cohomology is $g+1$. 
\end{rem}


\section{Cuspidal $\Lambda$-adic modular forms of genus 1}

In this section, we review Hida's construction of a cuspidal $\Lambda$-adic modular form of genus $1$ and of tame level $1$, and then we applies the Stevens' $\Lambda$-adic Shintani lifting for it in order to get a $p$-adic analytic family of half-integral weight forms corresponding to Hida's family via the Shimura correspondence. 

\subsection{Hida's universal ordinary $p$-stabilized newforms}

For each integer $r \ge 1$, let $X_1(p^r)=\Gamma_1(p^r)\backslash \mathfrak{H}_1 \cup \mathbb{P}^{1}(\mathbb{Q})$ be the compactified modular curve, and $V_r=H^1(X_1(p^r),\mathbb{Z}_p)$ the simplicial cohomology group of $X_1(p^r)$ with values in $\mathbb{Z}_p$. It is well-known that $V_r$ is canonically isomorphic to the parabolic cohomology group $H_{\rm par}^1(\Gamma_1(p^r),\mathbb{Z}_p) \subseteq H^1(\Gamma_1(p^m),\mathbb{Z}_p)$, which is defined to be the image of the compact-support cohomology group 
under the natural map (cf. \cite{Shim94a}).  
We denote the {\it abstract $\Lambda$-adic Hecke algebra} of tame level $1$ by the free polynomial algebra 
\[
\mathbb{T}:=\Lambda_1[{\rm T}_m\,|\,1 \le m \in \mathbb{Z}]
\]
generated by ${\rm T}_m$ over $\Lambda_1$. Since $\mathbb{T} \simeq \mathbb{Z}_p[{\rm T}_m, \mathbb{Z}_p^{\times}]$, a natural action of $\mathbb{T}$ on $V_r$ is defined by regarding the generator ${\rm T}_m$ acts via the $m$-th Hecke correspondence and elements of $\mathbb{Z}_p^{\times}$ act via the usual Nebentypus actions. 
For each pair of positive integers $(r_1,r_2)$ with $r_1 \geq r_2 $, the natural inclusion $\Gamma_1(p^{r_1}) \hookrightarrow
 \Gamma_1(p^{r_2})$ induces the corestriction $V_{r_1} \to V_{r_2}$, which commutes with the actoin of $\mathbb{T}$. Hence we may consider the projective limit 
\[
V_{\infty} := \varprojlim_{r \ge 1}
 V_r
\]
with a $\mathbb{T}$-algebra structure. We denote by $V_{\infty}^{\rm ord}$ the direct factor of $V_{\infty}$
cut out by 
the ordinary idempotent $e_{\rm ord}=\displaystyle\lim_{m \to \infty} {\rm T}_p^{m!}$, that is, 
\[
V_{\infty}^{\rm ord}=e_{\rm ord} \cdot V_{\infty}, 
\]
on which ${\rm T}_p$ acts invertibly. We note that $V_{\infty}^{\rm ord}$ is a $\Lambda$-algebra free of finite rank (cf. Theorem 3.1 in \cite{Hid86a}). 
Moreover, let $\mathcal{L}={\rm Frac}(\Lambda)$ be the fractional field of $\Lambda$, Hida constructed an idempotent $e_{\rm prim}$ in the image of $\mathbb{T}\otimes_{\Lambda} \mathcal{L}$ in ${\rm End}_{\mathcal{L}} (V_{\infty}^{\rm ord} \otimes_{\Lambda} \mathcal{L})$, which can be regarded as an analogue of the projection to the space of primitive Hecke eigenforms in Atkin-Lehner theory (cf. \cite{Hid86a}, pp.250, 252). 
Then we define the {\it universal ordinary parabolic cohomology group} of tame level $1$ by 
the $\mathbb{T}$-algebra
\[
\mathbb{V}^{\rm ord} := V_{\infty}^{\rm ord} \cap e_{\rm prim} (V_{\infty}^{\rm ord} \otimes_{\Lambda} \mathcal{L}), 
\]
which is a reflexive $\Lambda$-algebra of finite rank and is consequently locally free of finite rank over $\Lambda$. 
Then the {\it universal $p$-ordinary Hecke algebra} of tame level $1$ is defined to be 
the image $R^{\rm ord}$ of $\mathbb{T}$ in ${\rm End}_{\Lambda_1}(\mathbb{V}^{\rm ord})$ under the homomorphism 
\[
h: \mathbb{T} \to 
{\rm End}_{\Lambda_1}(\mathbb{V}^{\rm ord}). 
\]
We note that $R^{\rm ord}$ is naturally equipped with a formal $q$-expansion 
\begin{equation}
{\bf f}_{\rm ord}=\sum_{m=1}^{\infty} {\bf a}_m\, q^m \in R^{\rm ord}[[q]], \hspace*{5mm} 
{\bf a}_m=h({\rm T}_m), 
\end{equation}
which is called the {\it universal $p$-stabilized ordinary form} of tame level $1$. 

\vspace*{3mm}
Next, we introduce the global data to be interpolated by ${\bf f}_{\rm ord}$ as follows: 
\begin{defn}[ordinary $p$-stabilized newforms]
For given integers $\kappa \ge 2$ and $r \ge 1$, a Hecke eigenform $f_\kappa^{*} \in \mathscr{S}_{\kappa}(\Gamma_1(p^r))^{(1)}$ 
is called an {\it ordinary $p$-stabilized newform} if one of the following conditions holds true: 
\begin{itemize}
\item[(i)] $f_\kappa^*$ is a $p$-ordinary Hecke eigenform in $\mathscr{S}_{\kappa}^{\rm new}(\Gamma_1(p^r))^{(1)}$, where we denote by $\mathscr{S}_{\kappa}^{\rm new}(\Gamma_1(p^r))^{(1)}$ the subspace consisting of all newforms in $\mathscr{S}_{\kappa}(\Gamma_1(p^r))^{(1)}$. 

\item[(ii)] If $r=1$, then there exists a normalized ordinary Hecke eigenform $f_\kappa=\sum_{m=1}^{\infty} a_m(f_\kappa) q^m \in \mathscr{S}_{\kappa}({\rm SL}_2(\mathbb{Z}))
$ 
such that  
\[
f_\kappa^{*}(z)=f_\kappa(z) - \beta_p(f_{\kappa}) f_\kappa(pz)\,\,\,\, (z \in \mathfrak{H}_1), 
\]
where 
$\beta_p(f_\kappa)$ is the non-unit root of 
$X^2 - a_p(f_\kappa) X + p^{\kappa-1} =0$. 
\end{itemize}
\end{defn}

\begin{rem}
It follows from the definition that ordinary $p$-stabilized newforms 
are literally $p$-ordinary Hecke eigenforms. 
Indeed, the assertion is trivial in the case of (i).  
If $f_{\kappa}^{*} \in \mathscr{S}_{\kappa}(\Gamma_1(p))^{(1)}$ is taken as in (ii), then for each prime $l$, we have 
\[
a_l(f_{\kappa}^{*})=
\left\{
\begin{array}{cl}
a_l(f_{\kappa}) & \textrm{ if }l \ne p, \\[1mm]
\alpha_p(f_{\kappa}) & \textrm{ if }l = p,
\end{array}\right.
\]
where $\alpha_p(f_{\kappa})$ denotes the $p$-adic unit appearing in the factorization  
\[
X^2 - a_p(f_{\kappa}) X + p^{\kappa-1} =(X-\alpha_p(f_{\kappa}))(X-\beta_p(f_{\kappa})). 
\]
Hence we have \[L(s,\, f_{\kappa}^{*})=L^{(p)}(s,\,f_{\kappa}) \cdot(1-\alpha_p(f_{\kappa})p^{-s})^{-1},\] 
where $L^{(p)}(s,\,f_{\kappa})$ denotes Hecke's $L$-function of $f_{\kappa}$ with the $p$-local 
Euler factor removed. 
For a given $p$-ordinary Hecke eigenform in $\mathscr{S}_{\kappa}({\rm SL}_2(\mathbb{Z}))$, this type of $p$-adic normalization process selecting half the Euler factor at $p$ is called the {\it ordinary $p$-stabilization}. 
However, we should note that each 
ordinary $p$-stabilized newform $f_{\kappa}^{*} \in 
\mathscr{S}_{\kappa}(\Gamma_1(p))^{(1)}$ 
is actually an oldform except for $\kappa=2$. 
Indeed, if $f_{\kappa}^{*} \in \mathscr{S}_{\kappa}^{\rm new}(\Gamma_1(p))^{(1)}$, 
then we have 
\[
|a_p(f_{\kappa}^{*})|
=p^{\kappa/2-1}
\]
(cf. Theorem 4.6.17 (ii) in \cite{Miy06}). 
If $\kappa >2$, this contradicts the assumption that $f_{\kappa}^{*}$ is ordinary at $p$. 
Hence we summarize that each ordinary $p$-stabilized newform 
is in fact a $p$-ordinary Hecke eigenform occurring in either 
$\mathscr{S}_{2}^{\rm new}(\Gamma_1(p))^{(1)}$, 
$\mathscr{S}_{\kappa}^{\rm old}(\Gamma_1(p))^{(1)}:= \mathscr{S}_{\kappa}(\Gamma_1(p))^{(1)} -  \mathscr{S}_{\kappa}^{\rm new}(\Gamma_1(p))^{(1)}$ with $\kappa > 2$, or $\mathscr{S}_{\kappa}^{\rm new}(\Gamma_1(p^r))^{(1)}$ with some $\kappa \ge 2$ and $r > 1$. 
\end{rem}

Then the following theorem has been established by Hida: 

\begin{hidathm}[cf. 
Theorem 2.6 in \cite{G-S93}]
Let $r$ be a fixed positive integer, and ${\bf f}_{\rm ord}=\sum_{m = 1}^{\infty} \mathbf{a}_m q^m \in R^{\rm ord}[[q]]$ 
the universal ordinary $p$-stabilized form of tame level $1$ introduced above. 
Then for each $P \in \mathfrak{X}_{\rm alg}(R^{\rm ord})$, the specialization 
\[
{{\bf f}_{\rm ord}(P)=
\sum_{m = 1}^{\infty} \mathbf{a}_m(P) 
q^m \in \overline{\mathbb{Q}}_p[[q]]}
\] 
induces a one-to-one correspondence 
\begin{eqnarray*}
\lefteqn{
\left\{
P=(\kappa, \varepsilon) \in 
\mathfrak{X}_{\rm alg}(R^{\rm ord}) \left|\, 
2 \le \kappa \in \mathbb{Z},\,\,
\varepsilon : \mathbb{Z}_p^{\times} \to \overline{\mathbb{Q}}_p^{\times}\,(\textrm{finite character})
\right.\right\}
} \\
&\stackrel{1:1}{\longleftrightarrow}&
\left\{
f_{\kappa}^{*} \in \mathscr{S}_{\kappa}(\Gamma_0(p^r),\varepsilon \omega^{-\kappa}
)^{(1)} \left|
\begin{array}{c}
\textrm{ordinary }p\textrm{-stabilized newform} \\
\textrm{of tame level }1
\end{array}\right.
\right\}.
\end{eqnarray*}
\end{hidathm}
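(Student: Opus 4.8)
The plan is to read off, from each arithmetic point $P=(\kappa,\varepsilon)\in\mathfrak{X}_{\rm alg}(R^{\rm ord})$, the system of Hecke eigenvalues $\lambda_P(\mathrm{T}_m):=P(\mathbf{a}_m)=P(h(\mathrm{T}_m))$ and to identify it with the eigensystem of a unique ordinary $p$-stabilized newform of weight $\kappa$ and Nebentypus $\varepsilon\omega^{-\kappa}$. Since $R^{\rm ord}$ is the image of $\mathbb{T}=\Lambda_1[\mathrm{T}_m]$ and hence is generated over $\Lambda_1$ by the $\mathbf{a}_m=h(\mathrm{T}_m)$, the point $P$ is completely determined by the pair consisting of its restriction $P|_{\Lambda_1}=P_{\kappa,\varepsilon}$ and the eigensystem $\lambda_P$; conversely the $q$-expansion ${\bf f}_{\rm ord}(P)=\sum_{m\ge1}\lambda_P(\mathrm{T}_m)\,q^m$ is automatically normalized because $\mathbf{a}_1=h(\mathrm{T}_1)=1$. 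Thus the substance of the theorem is the assertion that the eigensystems carried by $R^{\rm ord}$ at arithmetic points are precisely the ordinary, tame-level-one eigensystems of classical cusp forms of weight $\kappa$ and character $\varepsilon$, and that the matching is bijective.

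The forward direction rests on Hida's control theorem for the $\Lambda$-adic parabolic cohomology. First I would invoke the freeness of $V_\infty^{\rm ord}$ over $\Lambda$ together with the comparison isomorphism that, upon specializing at $P_{\kappa,\varepsilon}$, identifies the reduction
\[
V_\infty^{\rm ord}\otimes_{\Lambda,P_{\kappa}}\overline{\mathbb{Q}}_p \;\simeq\; e_{\rm ord}\,H^1_{\rm par}\bigl(\Gamma_1(p^r),\,L_{\kappa-2}(\varepsilon)\bigr)\otimes\overline{\mathbb{Q}}_p,
\]
where $L_{\kappa-2}=\mathrm{Sym}^{\kappa-2}$ is the symmetric-power coefficient module attached to the weight $\kappa$ (cf. Theorem 3.1 in \cite{Hid86a}). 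Combined with the Eichler--Shimura isomorphism, which realizes $H^1_{\rm par}(\Gamma_1(p^r),L_{\kappa-2})\otimes\mathbb{C}$ as the sum of $\mathscr{S}_{\kappa}(\Gamma_1(p^r),\varepsilon)^{(1)}$ and its complex conjugate compatibly with the Hecke action, this shows that the eigensystems appearing on the left are exactly those of cusp forms of weight $\kappa$ and character $\varepsilon$. Applying $e_{\rm ord}$ retains precisely the eigensystems for which the $U_p$-eigenvalue $\lambda_P(\mathrm{T}_p)$ is a $p$-adic unit, so every such $P$ yields a $p$-ordinary eigenform, and cutting further by $e_{\rm prim}$ isolates the part primitive at the tame level, which in the present tame-level-one situation is the image of the Atkin--Lehner primitive projector.

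Next I would use the primitivity cut $e_{\rm prim}$ to pin the eigensystem down to a newform of tame conductor one and to distinguish the genuinely new case from the $p$-stabilized case, exactly along the trichotomy recorded in Remark 3.3. When the point lies over a $p$-new eigensystem (necessarily with $\kappa=2$ and $r=1$, or with $r>1$), ${\bf f}_{\rm ord}(P)$ is directly the $q$-expansion of a $p$-ordinary newform $f_\kappa^{*}\in\mathscr{S}_{\kappa}^{\rm new}(\Gamma_0(p^r),\varepsilon\omega^{-\kappa})^{(1)}$; otherwise $\lambda_P$ comes from a level-one eigenform $f_\kappa$, and ordinarity forces $\lambda_P(\mathrm{T}_p)=\alpha_p(f_\kappa)$ to be the unit root of $X^2-a_p(f_\kappa)X+p^{\kappa-1}=0$, so that ${\bf f}_{\rm ord}(P)$ is the ordinary $p$-stabilization $f_\kappa(z)-\beta_p(f_\kappa)f_\kappa(pz)$. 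In either case the output is an ordinary $p$-stabilized newform in the sense of the preceding definition.

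Finally, bijectivity follows formally once the control theorem is in hand. Surjectivity is the statement that every ordinary $p$-stabilized newform of weight $\kappa$ and Nebentypus $\varepsilon\omega^{-\kappa}$ contributes an ordinary, primitive one-dimensional Hecke eigenspace to $e_{\rm ord}e_{\rm prim}$-cohomology, hence a $\overline{\mathbb{Q}}_p$-algebra homomorphism $R^{\rm ord}\to\overline{\mathbb{Q}}_p$ restricting to $P_{\kappa,\varepsilon}$ on $\Lambda_1$, i.e.\ an arithmetic point over $(\kappa,\varepsilon)$; injectivity holds because two arithmetic points inducing the same eigensystem and the same restriction to $\Lambda_1$ agree on the generators $\mathbf{a}_m$ and on $\Lambda_1$, hence coincide. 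The main obstacle is the control theorem itself: one must show that the inverse-limit ordinary cohomology specializes correctly at each weight, which requires the freeness of $V_\infty^{\rm ord}$ over $\Lambda$ (equivalently, the stabilization of the ordinary cohomology independently of $r$ and the vanishing of the relevant higher limits) together with a careful treatment of the boundary between $e_{\rm prim}$-new forms and the ordinary $p$-stabilizations of level-one forms. Since all of this is precisely what is established by Hida in \cite{Hid86a} and reformulated by Greenberg--Stevens \cite{G-S93}, for our purposes it may be cited directly.
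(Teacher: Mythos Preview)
The paper does not supply a proof of this statement at all: Theorem II is stated with the attribution ``cf.\ Theorem 2.6 in \cite{G-S93}'' and is used as a black box, exactly in the spirit of your final sentence. Your outline of the argument---control theorem for $V_\infty^{\rm ord}$, Eichler--Shimura, the ordinary and primitive idempotents, and the formal bijectivity from the fact that $R^{\rm ord}$ is generated over $\Lambda_1$ by the $\mathbf{a}_m$---is a correct high-level account of Hida's proof, and your closing remark that one should simply cite \cite{Hid86a} and \cite{G-S93} is precisely what the paper does.
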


\begin{rem}
Obviously, Theorem II implies that ${\bf f}_{\rm ord}$ interpolates ordinary $p$-stabilized newforms of tame level $1$, that is, 
of level $p^r$ with some integer $r \ge 1$. 
In most generality, for each integer $N \ge 1$ prime to $p$,  
Hida constructed the universal ordinary $p$-stabilized newform of tame level $N$ interpolating $p$-ordinary cuspidal Hecke eigenforms of level $N p^r$.  
\end{rem}

By applying Theorem II 
for $r=1$,  
each $P =(2k, \omega^{2k}) \in \mathfrak{X}_{\rm alg}(R^{\rm ord})$ 
corresponds to an ordinary $p$-stabilized newform ${\bf f}_{\rm ord}(P)=
f_{2k}^{*} \in \mathscr{S}_{2k}(\Gamma_0(p))^{(1)}$ associated with 
a $p$-ordinary normalized Hecke eigenform $f_{2k} \in \mathscr{S}_{2k}({\rm SL}_2(\mathbb{Z}))$ via the ordinary $p$-stabilization. However, 
we note that $\dim_{\mathbb{C}} \mathscr{S}_{2k}({\rm SL}_2(\mathbb{Z}))=0$ for $k <6$, and hence 
${\bf f}_{\rm ord}(P)$ vanishes identically at $\{P =(2k, \omega^{2k}) \in \mathfrak{X}_{\rm alg}(R^{\rm ord})\,|\,1 < k< 6\}$. 
Therefore, for a fixed $P_0=(2k_0, \omega^{2k_0})\in \mathfrak{X}_{\rm alg}(R^{\rm ord})$ with $k_0 \ge 6$, we may regard ${\bf f}_{\rm ord} \in R^{\rm ord}[[q]]$ as a $\Lambda$-adic cusp form of genus $1$, 
and 
we consequently obtain a $p$-adic analytic family of ordinary $p$-stabilized newforms
$\{{\bf f}_{\rm ord}(P)=f_{2k}^{*}\}$ 
parametrized by 
$P =(2k, \omega^{2k})\in \mathfrak{X}_{\rm alg}(R^{\rm ord})$ with $k_0 \le k \in \mathbb{Z}$.  

\vspace*{3mm}
In our setting, the choice of $P_0 = (2k_0, \omega^{2k_0})$
having the smallest possible weight $
2k_0$ is obviously taken as $k_0=6$, that is, $P_0$ corresponds to Ramanujan's $\Delta$-function \[
f_{12}=q\prod_{m=1}^{\infty}(1 - q^m)^{24}=\sum_{m=1}^{\infty} \tau(m) q^m
 \in \mathscr{S}_{12}({\rm SL}_2(\mathbb{Z})). 
 \]
In addition, we may choose any analytic neighborhood $\mathfrak{U}_0$ of such $P_0$ in $\mathfrak{X}(R^{\rm ord})$. 
Since we will apply some 
lifting for 
${\bf f}_{\rm ord}$ in the sequel, the choices of $k_0$, 
$P_0$ and $\mathfrak{U}_0$ may vary depending on the intended use. 
For readers' convenience, 
we present a list of ordinary primes with respect to the unique normalized Hecke eigenforms $f_{2k_0} \in \mathscr{S}_{2k_0}({\rm SL}_2(\mathbb{Z}))$ with $k_0 \in \{6,\,8,\,9,\,10,\,11,\,13\}$, that is, rational primes at which $f_{2k_0}$ is ordinary:  
\begin{table}[htbp]
\begin{tabular}{|c|c|}
\hline
$k_0$  & Ordinary primes for $f_{2k_0}$ \\
\hline
6 & $11\le p \le 2399
,\,2417 \le p \le 19597$ \\
\hline
8 & $17 \le p \le 53
, \, 61 \le p \le 15269
, \, 15277 \le p \le 19597$ \\
\hline
9 & $17 \le p \le 14879$ \\
\hline
10 & $19 \le p \le 3361
,\, 3373 \le p \le 9973$ \\
\hline
11 & $p=11,\, 23 \le p \le 9973$ \\
\hline
13 & $29 \le p \le 9973$ \\
\hline 
\end{tabular}
\end{table}

\vspace*{-4mm}
For the smallest ordinary prime $p=11$ with respect to 
$f_{12}$, we give a numerical example of another components of the Hida family: 

\begin{ex}
Since $6+(11-1)=16$, we focus on the 
2-dimensional space $\mathscr{S}_{32}({\rm SL}_2(\mathbb{Z}))$.  
Then we may take a normalized Hecke eigenform $f_{32} \in \mathscr{S}_{32}({\rm SL}_2(\mathbb{Z}))$ 
determined uniquely up to Galois conjugation
such that
\begin{eqnarray*}
f_{32} 
&=&q + x q^2 + (432x + 50220)q^3 + (39960x + 87866368)q^4 \\
&& - (1418560x- 18647219790)q^5 + (17312940x + 965671206912)q^6 \\
&& -(71928864x - 16565902491320)q^7 
-(462815680x - 89324586639360)
  q^8  \\
&& + (7500885120x - 200500912849563) q^9 \\
&&- (38038437810x + 3170978118696960)q^{10} \\
&&+ (29000909200x - 4470615038375388)q^{11} +
\cdots 
\in K_{32}[[q]],
\end{eqnarray*}
where $K_{32}$ denotes the real quadratic field \[
\mathbb{Q}[x]/(x^2 - 39960x - 2235350016). 
\] 
We easily check that the norm of the difference $a_{11}(f_{12}) - a_{11}(f_{32})$ is factored into 
\[
2^8 \cdot 3^3 \cdot 5^4 \cdot 11 \cdot 368789 \cdot 99988481 \cdot 7376353157. 
\]
Therefore we obtain a congruence between $f_{12}$ and $f_{32}$ modulo a prime ideal of the ring of integers $\mathcal{O}_{K_{32}}$ of $K_{32}$ 
lying over $11$, which implies that their ordinary $11$-stabilizations 
$f_{12}^{*}$ and $f_{32}^{*}$ 
reside both in the Hida family for $p=11$.  
\end{ex}

\vspace*{4mm}
\subsection{$\Lambda$-adic Shintani lifting}
As mentioned in the previous \S\S2.1, we reveiw the construction of 
an inverse correspondence of the 
Shimura correspondence in the sense of Shintani \cite{Shin75} and Kohnen \cite{Koh85}. Moreover, we introduce a similar lifting for the universal ordinary $p$-stabilized newform ${\bf f}_{\rm ord}$, which was constructed by Stevens \cite{Ste94}. \\

For simplicity, suppose that $N \ge 1$ is odd squarefree and $k \ge 2$. 
Let $\mathfrak{D}$ be an integer with $\mathfrak{D} \equiv 0,\,1 \pmod{4}$ 
and $(-1)^k \mathfrak{D}>0$.  
We denote by $\mathcal{L}(\mathfrak{D})$ the set of all primitive matrices $Q \in {\rm Sym}_2^{*}(\mathbb{Z})$ 
with discriminant $
-\det(2Q)=\mathfrak{D}$. 
We may naturally identify each element $Q=\left(\begin{smallmatrix}
a & b/2 \\
b/2 & c
\end{smallmatrix}\right) \in \mathcal{L}(\mathfrak{D})$
with an integral binary quadratic form $
Q(x,y)=ax^2 +bxy+cy^2$ with $\gcd(a,b,c)=1$. 
For simplicity, 
we write $Q=[a,b,c]$ instead of $\left(\begin{smallmatrix}
a & b/2 \\
b/2 & c
\end{smallmatrix}\right)$ in the sequel. 
We also denote by $\mathcal{L}_N(\mathfrak{D})$ the subset of $\mathcal{L}(\mathfrak{D})$ consisting of all elements $[a,b,c]$ with $a \equiv 0 \pmod{N}$. We easily see that if $\mathfrak{D} \equiv 0 \pmod{N}$, 
then 
\[
\mathcal{L}_N(\mathfrak{D})=\{\,[a,b,c] \in \mathcal{L}(\mathfrak{D})\,|\, a \equiv b \equiv 0 \!\!\!\pmod{N}\,\}. 
\]
We note that the congruence subgroup $\Gamma_0(N) \subseteq {\rm SL}_2(\mathbb{Z})$ acts on $\mathcal{L}_N(\mathfrak{D})$ as 
\[
\begin{array}{ccl}
\mathcal{L}_N(\mathfrak{D}) 
\times 
\Gamma_0(N)
& \longrightarrow & \mathcal{L}_N(\mathfrak{D}) \\[2mm]
(Q, M) & \longmapsto & Q \circ M:=
{}^t M Q M,  
\end{array}
\]
and we easily see that $
\mathcal{L}_N(\mathfrak{D})/\Gamma_0(N)$ is finite. 
For each $Q=[a,b,c] \in \mathcal{L}_N(\mathfrak{D})$, we associate it with a geodesic cycle $C_Q$ 
in $\Gamma_0(N)\backslash \mathfrak{H}_1$ that is defined as the image of the semicircle $$az^2+b{\rm Re}(z)+c=0$$ oriented either from left to right (resp. from right to left) if $a>0$ (resp. $a<0$) or from $-c/b$ to $\sqrt{-1} \infty$ according as $a\ne0$ or $a=0$. 
Then for each $f \in \mathscr{S}_{2k}(\Gamma_0(N))$, we define a cycle integral associated with $f$ by
\[
r_{Q}(f):=\displaystyle\int_{C_Q
} f(z) 
Q(z,1)^{k-1} dz. 
\]
Then the following theorem is given by Shintani and Kohnen: 

\begin{shintanithm}[cf. Theorem 2 in \cite{Koh81}]
Let $\mathfrak{d}$ be a fixed fundamental discriminant with $(-1)^k \mathfrak{d} >0$. 
For each $f \in \mathscr{S}_{2k}(\Gamma_0(N))$, put  
\begin{eqnarray*}
\lefteqn{
\vartheta_{\mathfrak{d}}(f):=\sum_{\scriptstyle m \ge 1, \atop {\scriptstyle (-1)^k m \equiv 0,1 \hspace{-2mm}\pmod{4}}} 
\!\!\left\{
\sum_{0<d | N} \mu(d) \left({\,\mathfrak{d}\,\over d}\right) d^{k-1} \right.} \\
&&\hspace*{35mm}\times\left.
\sum_{Q \in \mathcal{L}_{(Nd)}(\mathfrak{d}md^2)/\Gamma_0(Nd)} \chi_{\mathfrak{d}}(Q)\, r_{Q}(f)
\right\} q^m,
\end{eqnarray*}
where 
$\chi_{\mathfrak{d}}
$ denotes the generalized genus character associated with ${\mathfrak{d}_0}$ {\rm (cf.} \cite{GKZ87}{\rm ).} 
Then we have $\vartheta_{\mathfrak{d}}(f) \in \mathscr{S}_{k+1/2}^{+}(\Gamma_0(4N))^{(1)}$. Moreover, 
the mapping 
\[
\vartheta_{\mathfrak{d}} : \mathscr{S}_{2k}(\Gamma_0(N))^{(1)} \longrightarrow \mathscr{S}_{k+1/2}^{+}(\Gamma_0(4N))^{(1)} 
\]
is Hecke equivariant 
in the sense of the Shimura correspondence.  
\end{shintanithm}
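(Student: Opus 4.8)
The plan is to realize the map $\vartheta_{\mathfrak{d}}$ as a theta lift against a Shintani--Kohnen kernel and to transfer both the modularity and the Hecke-equivariance from the kernel to the lift. First I would introduce a two-variable kernel $\Omega_{\mathfrak{d}}(\tau,z)$ with $\tau,z\in\mathfrak{H}_1$, defined as a sum over binary quadratic forms $Q$ of discriminant $\equiv 0,1\pmod 4$, in which each term carries the genus character $\chi_{\mathfrak{d}}(Q)$, the holomorphic polynomial $Q(z,1)^{k-1}$ matching the integrand of $r_{Q}(f)$, and an exponential factor in $\tau$ producing half-integral weight behaviour. Following Shintani \cite{Shin75} and Kohnen \cite{Koh81}, one arranges the weights and the summation lattices $\mathcal{L}_{(Nd)}(\mathfrak{d}md^2)$ so that, as a function of $z$, $\Omega_{\mathfrak{d}}$ is a cusp form of weight $2k$ on $\Gamma_0(N)$, while as a function of $\tau$ it is a form of weight $k+1/2$ on $\Gamma_0(4N)$ lying in the plus space; the auxiliary average over $0<d\mid N$ weighted by $\mu(d)\left(\frac{\mathfrak{d}}{d}\right)d^{k-1}$ is precisely Kohnen's plus-space projector.

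With the kernel in hand, I would set
\[
\vartheta_{\mathfrak{d}}(f)(\tau)=\langle f,\,\Omega_{\mathfrak{d}}(\tau,\cdot)\rangle,
\]
the Petersson pairing taken in the $z$-variable. Modularity in $\tau$ is then inherited directly from that of the kernel, which yields $\vartheta_{\mathfrak{d}}(f)\in\mathscr{S}_{k+1/2}^{+}(\Gamma_0(4N))^{(1)}$ together with cuspidality. The stated Fourier expansion would be obtained by unfolding this integral: summing $Q$ over $\Gamma_0(N)$-orbits collapses the fundamental domain onto the geodesic cycles $C_{Q}$, so the $m$-th Fourier coefficient becomes the weighted sum of cycle integrals $\chi_{\mathfrak{d}}(Q)\,r_{Q}(f)$ exactly as in the statement, with the $d$-sum reproducing the plus-space projection.

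For the Hecke-equivariance I would prove the kernel identity
\[
\Omega_{\mathfrak{d}}\,|_{k+1/2,\,\tau}\,T(l^2)=\Omega_{\mathfrak{d}}\,|_{2k,\,z}\,T(l)
\]
for each prime $l\nmid 4N$, by analysing how the $l$-th and $l^2$-th Hecke correspondences permute the forms $Q$ modulo $\Gamma_0(N)$ and respect the genus character, in the spirit of Shimura's relations between Hecke operators in integral and half-integral weight. Pairing this identity against $f$ and invoking self-adjointness of the Hecke operators under the Petersson product transfers the eigenvalue relation, giving that $\vartheta_{\mathfrak{d}}$ intertwines $T(l)$ and $T(l^2)$ compatibly with the Shimura correspondence.

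The main obstacle I expect is the verification of the half-integral weight transformation of $\Omega_{\mathfrak{d}}$ in $\tau$ together with its membership in the plus space. This rests on Poisson summation and the transformation law of theta series under $\Gamma_0(4N)$, and requires careful tracking of the automorphy factor $j(M,z)$ and the attached quadratic residue symbols; the genus character $\chi_{\mathfrak{d}}$ and the $d\mid N$ average must be calibrated exactly so that the Kronecker-symbol conditions of the plus space are met with the correct normalization. By comparison, once the bi-modularity of the kernel is secured, the Hecke commutation is essentially formal, albeit combinatorially involved.
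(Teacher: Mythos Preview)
Your proposal is sound and is in fact the classical Shintani--Kohnen approach via a bi-modular theta kernel, which is precisely how the cited references \cite{Shin75} and \cite{Koh81} establish the result. Note, however, that the paper itself does not prove Theorem~III at all: it is stated as a citation (``cf.\ Theorem~2 in \cite{Koh81}'') and used as a black box, so there is no proof in the paper to compare against. Your outline matches the argument in the original sources; the one point to be careful about is that the Fourier-coefficient formula you would obtain from unfolding the Petersson integral carries an extra normalizing factor (involving $\|f\|^2$ and a power of~$2$, as in equation~(11) of the paper), so the map $\vartheta_{\mathfrak d}$ as defined by the explicit $q$-expansion in the statement differs from the raw Petersson pairing $\langle f,\Omega_{\mathfrak d}(\tau,\cdot)\rangle$ by such a scalar. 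This does not affect modularity or Hecke-equivariance, but it does affect the precise algebraicity and $p$-adic interpolation statements used later in the paper.
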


\vspace*{2mm}
This type of lifting from integral weight to half-integral weight was firstly introduced by Shintani \cite{Shin75}, and afterwards was reformulated by Kohnen \cite{Koh81}. According to the custom, we refer to $\vartheta_{\mathfrak{d}}$ as the {\it $\mathfrak{d}$-th Shintani lifting}. \\

For a given normalized Hecke eigenform $f \in \mathscr{S}_{2k}(\Gamma_0(N))$, we note that all of 
the Shintani lifting $\vartheta_{\mathfrak{d}}(f)$ 
give rise to Hecke eigenforms in $\mathscr{S}_{k+1/2}^{+}(\Gamma_0(4N))^{(1)}$ corresponding to $f$ via the Shimura correspondence, however they differ from each other by the normalization 
of the Fourier coefficients depending on $\mathfrak{d}$. 
Indeed, suppose for simplicity that $N=1$, $f \in \mathscr{S}_{2k}({\rm SL}_2(\mathbb{Z}))$ is a normalized Hecke eigenform and $h \in \mathscr{S}_{k+1/2}^{+}(\Gamma_0(4))^{(1)}$ a corresponding Hecke eigenform via the Shimura correspondence as in (2). Then 
for each integer $m \ge 1$ with $(-1)^k m \equiv 0,\,1 \pmod{4}$, 
we have 
\begin{eqnarray}
{c_{|\mathfrak{d}|}(h) c_m(h) \over \|h\|^2}&=&{(-1)^{[k/2]} 2^k \over \|f\|^2} 
\!\!\sum_{Q \in \mathcal{L}(\mathfrak{d}m)/{\rm SL}_2(\mathbb{Z})} \!\!\chi_{\mathfrak{d}}(Q)\, r_{Q}(f) \\
&=& {(-1)^{[k/2]} 2^k \over \|f\|^2} c_m(\vartheta_{\mathfrak{d}}(f)) \nonumber
\end{eqnarray}
(cf. Theorem 3 in \cite{Koh85}). 
Namely, the choice of $\mathfrak{d}$ determines the normalization datum. 

\begin{rem}
We note that the $\mathfrak{d}$-th Shintani lifting $\vartheta_{\mathfrak{d}}(f)$ admits a nice algebraic property. Indeed, by the equations (8) and (11), we have 
\[
c_{|\mathfrak{d}|}(\vartheta_{\mathfrak{d}}(f))=(-1)^{[k/2]}{(k-1)! \over (2\pi)^{k}} \cdot |\mathfrak{d}|^{k-1/2} L_{\mathfrak{d}}(k,\,f). 
\]
On the other hand, by Manin \cite{Man73} and Shimura \cite{Shim82}, we may associate $f$ with two complex periods $\Omega^{+}$ and $\Omega^{-}$ such that for each critical point $s \in \mathbb{Z}$ with $0 < s < 2k$, the special value 
$\pi^{-s}L_{\mathfrak{d}}(s,\,f)/\Omega^{\epsilon}$ resides in the field 
$
K_{f}(\sqrt{|\mathfrak{d}|}),$
where $\epsilon \in \{\pm\}$ is the signature of 
$(-1)^k \left({\mathfrak{d} \over -1}\right)$. 
Therefore we obtain 
${c_{|\mathfrak{d}|}(\vartheta_{\mathfrak{d}}(f))/\Omega^{\epsilon
} \in 
K_{f}(\sqrt{|\mathfrak{d}|})}$. Moreover, let $\mathcal{O}_f$ be 
the ring of integers in $K_f
$. Then by combining Kohnen's theory and a result of Stevens (cf. Proposition 2.3.1 in \cite{Ste94}), 
we have 
$${1\over \Omega^{\epsilon}}\,\vartheta_{\mathfrak{d}}(f) \in \mathcal{O}_f[[q]]$$
after taking a suitable normalization. 
\end{rem}

Now, let us consider a $\Lambda$-adic analogue of the Shintani lifting for the universal ordinary $p$-stabilized newform ${\bf f}_{\rm ord} \in R^{\rm \,ord}[[q]]$ (cf. (10) in \S\S 3.1): 
We define the {\it metaplectic double covering} of the universal ordinary Hecke algebra $R^{\rm \,ord}$ 
by 
\[
\widetilde{R}^{\rm \,ord} := R^{\rm \,ord} \otimes_{\Lambda_1,\sigma} \Lambda_1, 
\]
where 
the tensor product is taken with respect to the ring homomorphism $\sigma : \Lambda_1 \to \Lambda_1$ corresponding to the group homomorphism $y \mapsto y^2$ on $\mathbb{Z}_p^{\times}$. We note that $\widetilde{R}^{\rm \,ord}$ has a natural $\Lambda_1$-algebra structure induced from the homomorphism $\lambda \mapsto 1 \otimes \lambda$ on $\Lambda_1$. Therefore we may define the associated $\Lambda$-adic analytic space $\mathfrak{X}(\widetilde{R}^{\rm ord})$ and its subset $\mathfrak{X}_{\rm alg}(\widetilde{R}^{\rm ord})$ of arithmetic points 
as well as $R^{\rm ord}$. 
However, we should mention that the ring homomorphism 
\[
\begin{array}{lcc}
R^{\rm ord}& \longrightarrow &\widetilde{R}^{\rm \,ord} \\
\,\mathbf{a} &\longmapsto &\mathbf{a} \otimes 1
\end{array}
\]
is not a $\Lambda_1$-algebra homomorphism. This causes the fact that the mapping induced by pullback on $\Lambda$-adic analytic spaces 
$\mathfrak{X}(\widetilde{R}^{\rm ord}) \to \mathfrak{X}(R^{\rm ord})$ 
does not preserve the signatures of arithmetic points. Indeed, we easily see that if $\widetilde{P}=(\kappa,\varepsilon) \in\mathfrak{X}_{\rm alg}(\widetilde{R}^{\rm ord})$ lies over $P \in \mathfrak{X}_{\rm alg}(R^{\rm ord})$, then $P=(2\kappa, \varepsilon^2)$. 

\vspace*{3mm}
Then the following theorem is a refinement of Stevens' $\Lambda$-adic Shintani lifting for ${\bf f}_{\rm ord} \in R^{\rm \,ord}[[q]]$: 

\begin{stevensthm}[cf. Theorem 3 in \cite{Ste94}]
For a fixed $P_0=(2k_0, \omega^{2k_0 }) \in \mathfrak{X}_{\rm alg}(R^{\rm ord})$ with $k_0 >1$, let 
$\mathfrak{d}_0$ be a fundamental discriminant with $(-1)^{k_0} \mathfrak{d}_0>0$ and $\mathfrak{d}_0 \equiv 0 \pmod{p}$. 
Then there exist 
a formal $q$-expansion \[
\Theta_{\mathfrak{d}_0}=\sum_{m\ge1} {\bf b}(\mathfrak{d}_0;\,m) \,q^m \in \widetilde{R}^{\rm \,ord}[[q]]\] 
and a choice of $p$-adic periods $\Omega_P \in \overline{\mathbb{Q}}_p$ for $P \in \mathfrak{X}_{\rm alg}(R^{\rm ord})$ {\rm (cf. \cite{G-S93})} satisfying the following: 
\begin{itemize}
\item[(i)] $\Omega_{P_0} \ne 0$. 

\item[(ii)] For each $\widetilde{P}=(k,\omega^{k}) \in \mathfrak{X}_{\rm alg}(\widetilde{R}^{\rm \,ord})$, the specialization 
{\[
\Theta_{\mathfrak{d}_0}(\widetilde{P})= \sum_{m \ge 1} {\bf b}(\mathfrak{d}_0;\,m) (\widetilde{P})\,q^m  \in \overline{\mathbb{Q}}_p[[q]]
\]}
\hspace*{-0.5mm}gives rise to a holomorphic cusp form in $\mathscr{S}_{k+1/2}^{+}(\Gamma_0(4p))^{(1)}$. \\
In particular, there exists an analytic neighborhood $\mathfrak{U}_0$ of $P_0$ \\
such that for each $\widetilde{P}$ lying over $P=(2k, \omega^{2k})
 \in \mathfrak{U}_0$, we have 
{\[
\Theta_{\mathfrak{d}_0}(\widetilde{P})={\Omega_{P}
\over \Omega^{\epsilon}(P)
}\, \vartheta_{\mathfrak{d}_0}({\bf f}_{\rm ord}(P)), 
\]}
where $\Omega^{\epsilon}(P)$ denotes 
the complex periods of ${\bf f}_{\rm ord}(P)$ with \\
\,\,signature $\epsilon \in \{\pm \}$. 
\end{itemize}
\end{stevensthm}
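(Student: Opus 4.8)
The plan is to follow the construction of Stevens \cite{Ste94}, recalling its principal steps and indicating the modifications needed for the present normalization: I would realize the Shintani lifting cohomologically and then interpolate it $p$-adically through the universal ordinary cohomology $\mathbb{V}^{\rm ord}$ of \S\S 3.1.

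First I would reinterpret the cycle integrals defining the Fourier coefficients in Theorem III as values of an Eichler--Shimura cohomology class. For $f \in \mathscr{S}_{2k}(\Gamma_0(p))^{(1)}$ the associated parabolic cohomology class lives in $H^1_{\rm par}(\Gamma_0(p),\, L_{2k-2})$, where $L_{2k-2}$ denotes the module of homogeneous polynomials of degree $2k-2$ in two variables $X,\,Y$; evaluating this class on the geodesic cycle $C_Q$ and pairing against $Q(X,Y)^{k-1}$ recovers $r_Q(f)$ up to the standard normalization. Consequently, for each $m$ the coefficient $c_m(\vartheta_{\mathfrak{d}_0}(f))$ becomes a finite $\mathbb{Z}$-linear combination of such cohomological pairings, indexed by the classes $Q \in \mathcal{L}_{(pd)}(\mathfrak{d}_0 m d^2)/\Gamma_0(pd)$ for $d \mid p$, weighted by $\mu(d)\left(\frac{\mathfrak{d}_0}{d}\right) d^{k-1}$ and by the genus character $\chi_{\mathfrak{d}_0}(Q)$.

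Next I would invoke the $\Lambda$-adic ordinary cohomology $\mathbb{V}^{\rm ord}$. By Hida's control theorem the specialization of $\mathbb{V}^{\rm ord}$ at an arithmetic point $P = (2k,\omega^{2k})$ recovers the ordinary part of $H^1_{\rm par}(\Gamma_0(p),\, L_{2k-2})$, and the universal Hecke action produces $\mathbf{f}_{\rm ord}$. Comparing a fixed $\Lambda$-adic generator with the integrally normalized cohomology class of $\mathbf{f}_{\rm ord}(P) = f_{2k}^{*}$ — the latter normalized by the complex period $\Omega^{\epsilon}(P)$ of Manin and Shimura for the appropriate signature $\epsilon$ — introduces the $p$-adic period $\Omega_P$, defined so that the specialization of the generator equals $\Omega_P$ times this normalized class. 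The key step is then the $p$-adic interpolation of the coefficient data as $k$ varies: for fixed $m$ the index set and the weights $\chi_{\mathfrak{d}_0}(Q)$ are independent of $k$, the sole $k$-dependence residing in the weight-$(k-1)$ pairing against $Q(X,Y)^{k-1}$ and in the factor $d^{k-1}$. Both are interpolated by the distribution-valued coefficient module attached to $\mathbb{V}^{\rm ord}$ (through the isomorphism $\Lambda \simeq {\rm Dist}(\mathbb{Z}_p,\mathbb{Z}_p)$ noted above): the $\Lambda$-adic modular symbol paired against the $p$-adic avatar of $Q(X,Y)^{k-1}$ yields an element of $\widetilde{R}^{\rm ord}$ whose specialization at $\widetilde{P} = (k,\omega^{k})$ recovers the classical pairing. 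This is precisely where the metaplectic twist $\widetilde{R}^{\rm ord} = R^{\rm ord} \otimes_{\Lambda_1,\sigma} \Lambda_1$ enters, the squaring map $\sigma$ relating the weight-$2k$ parameter of the elliptic family to the weight-$(k+\tfrac12)$ parameter of the half-integral family, so that $\widetilde{P}$ lies over $P = (2k,\omega^{2k})$. Setting $\mathbf{b}(\mathfrak{d}_0;\,m)$ equal to this interpolated sum defines $\Theta_{\mathfrak{d}_0} \in \widetilde{R}^{\rm ord}[[q]]$, whose specialization satisfies $\Theta_{\mathfrak{d}_0}(\widetilde{P}) = \frac{\Omega_P}{\Omega^{\epsilon}(P)}\,\vartheta_{\mathfrak{d}_0}(\mathbf{f}_{\rm ord}(P))$ for $\widetilde{P}$ over $P \in \mathfrak{U}_0$; holomorphy and membership in the plus space $\mathscr{S}_{k+1/2}^{+}(\Gamma_0(4p))^{(1)}$ then follow from Theorem III applied fibrewise.

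Finally, for property (i) the period $\Omega_{P_0}$ is non-zero because the ordinary modular symbol attached to $f_{2k_0}^{*}$ is non-zero by Eichler--Shimura injectivity on the ordinary part, and the choice of $\mathfrak{d}_0$ — with $(-1)^{k_0}\mathfrak{d}_0 > 0$, the congruence $\mathfrak{d}_0 \equiv 0 \pmod p$ permitted by the non-vanishing theorem proved above, and $L_{\mathfrak{d}_0}(k_0,\, f_{2k_0}) \ne 0$ — ensures this non-vanishing is detected by the $|\mathfrak{d}_0|$-th Fourier coefficient, so that $\Theta_{\mathfrak{d}_0}(\widetilde{P}_0) \ne 0$ and hence $\Omega_{P_0} \ne 0$. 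I expect the main obstacle to be the construction and control of the distribution-valued $\Lambda$-adic modular symbol, together with the verification that pairing it against the interpolated binary-quadratic-form data is compatible, under specialization, with the classical cycle integrals — in particular that a \emph{single} period $\Omega_P$, independent of $m$, governs all Fourier coefficients simultaneously. This rests on Hida's freeness and control results for $\mathbb{V}^{\rm ord}$ and on Stevens' identification of the Shintani lifting with a cohomological pairing, and requires care that the genus-character weighting $\chi_{\mathfrak{d}_0}$ and the divisor sum over $d \mid p$ assemble into a single coefficient in $\widetilde{R}^{\rm ord}$.
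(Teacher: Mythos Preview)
The paper does not give its own proof of this statement: Theorem IV is quoted verbatim as ``cf.\ Theorem 3 in \cite{Ste94}'' and is used as a black box in the subsequent arguments (notably in the proof of Proposition 3.7). There is therefore no proof in the paper to compare your proposal against.

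That said, your sketch is a faithful outline of Stevens' construction in \cite{Ste94}: the cohomological reinterpretation of the cycle integrals $r_Q(f)$ via Eichler--Shimura, the passage to the $\Lambda$-adic ordinary cohomology $\mathbb{V}^{\rm ord}$ and Hida's control theorem, the appearance of the $p$-adic period $\Omega_P$ as the comparison scalar between a chosen $\Lambda$-adic generator and the complex-period-normalized class, and the role of the metaplectic base change $\widetilde{R}^{\rm ord}$ are all correctly identified. One small point: your argument for (i) conflates two separate non-vanishing statements. The non-vanishing $\Omega_{P_0}\ne 0$ is a statement about the $p$-adic period itself (coming from the non-triviality of the ordinary modular symbol at $P_0$, as in \cite{G-S93}), and holds regardless of the choice of $\mathfrak{d}_0$; the non-vanishing of $\Theta_{\mathfrak{d}_0}(\widetilde{P}_0)$, which depends on the choice of $\mathfrak{d}_0$ through $L_{\mathfrak{d}_0}(k_0,f_{2k_0})\ne 0$, is a separate matter that the paper addresses in the paragraph following Theorem IV. You should not deduce $\Omega_{P_0}\ne 0$ from $\Theta_{\mathfrak{d}_0}(\widetilde{P}_0)\ne 0$; the logical order is the reverse.
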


We note that a non-vanishing property of $\Theta_{\mathfrak{d}_0}$ is naturally induced from the one of the classical Shintani lifting $\vartheta_{\mathfrak{d}_0}$. 
Indeed, if $\vartheta_{\mathfrak{d}_0}({\bf f}_{\rm ord}(P_0))$ is non-zero, then by virtue of the property (i), $\Theta_{\mathfrak{d}_0}$ does not vanish on an analytic neighborhood $\mathfrak{U}_0$ of $P_0$. 
Hence a suitable choice of $(\mathfrak{d}_0, \mathfrak{U}_0)$ yields a $p$-adic analytic family of non-zero half-integral weight forms $\{{\Omega_{P}
\over \Omega^{\epsilon}(P)
}\,\vartheta_{\mathfrak{d}_0}({\bf f}_{\rm ord}(P))\}$ parametrized by varying arithmetic points $P=(2k, \omega^{2k}) \in \mathfrak{U}_0$ with $k \ge k_0$. 
For further details on the non-vanishing properties of the classical Shintani lifting and its generalizations, see \cite{Wal80, Wal91}.  

\vspace*{3mm}
For each $P=(2k, \omega^{2k}) \in \mathfrak{U}_0$ with $k \ge k_0$, 
let $f_{2k} \in \mathscr{S}_{2k}({\rm SL}_2(\mathbb{Z}))$ be 
a $p$-ordinary normalized Hecke eigenform corresponding to ${\bf f}_{\rm ord}(P)
 \in \mathscr{S}_{2k}(\Gamma_0(p))^{(1)}$ (i.e. ${\bf f}_{\rm ord}(P)=f_{2k}^{*}$). 
Then by Theorem IV, we may also resolve the $p$-adic interpolation problem for some Fourier coefficients of the classical Shintani lifting $\vartheta_{\mathfrak{d}_0}(f_{2k}) \in \mathscr{S}_{k+1/2}^{+}(\Gamma_0(4))^{(1)}$. 

\vspace*{3mm}
For such occasions, we prepare the following: 

\begin{lem}
Suppose that $\mathfrak{D} \equiv 0 \pmod{p}$. Then we have 
\begin{itemize}
\item[(i)] For each $Q 
\in \mathcal{L}(\mathfrak{D})
$, there exists $Q'
 \in \mathcal{L}_p(\mathfrak{D})$ such that 
 \[
 Q' \equiv Q \pmod{{\rm SL}_2(\mathbb{Z})}
 . \]

\item[(ii)] If $\mathfrak{D} \equiv 0 \pmod{p^2}$ and $\left({\mathfrak{D}/p^2 \over p}\right)=1$, then 
for each $Q 
\in \mathcal{L}_p(\mathfrak{D})
$, there exists $[a,b,c]
 \in \mathcal{L}_p(\mathfrak{D})$ such that $a \equiv 0 \pmod{p^3}$ and  
 \[
Q \equiv [a,b,c] \pmod{\Gamma_0(p)}
 . \]

\item[(iii)] 
The identification  
$[a,b,c] 
\,\,\mathrm{mod}\,\, \Gamma_0(p) 
\mapsto 
[a,b,c]$
$\mathrm{mod}\,\, {\rm SL}_2(\mathbb{Z})
$ induces a one-to-one correspondence between 
$\mathcal{L}_p(\mathfrak{D})/\Gamma_0(p)$ and $\mathcal{L}_p(\mathfrak{D})/{\rm SL}_2(\mathbb{Z})$. 

\item[(iv)] If $\mathfrak{D} \not\equiv 0 \pmod{p^2}$, then the mapping 
$[a,b,c] 
\,\,\mathrm{mod}\,\, \Gamma_0(p) 
 \mapsto [p^{-1}a,b,pc] $ 
 $\mathrm{mod}\,\, {\rm SL}_2(\mathbb{Z})
$ induces a one-to-one correspondence between 
$\mathcal{L}_p(\mathfrak{D})/\Gamma_0(p)$ and $\mathcal{L}(\mathfrak{D})/{\rm SL}_2(\mathbb{Z})$. 

\end{itemize}
\end{lem}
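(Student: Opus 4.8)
The plan rests on two elementary observations that drive all four parts. First, for $M=\left(\begin{smallmatrix}\alpha&\beta\\\gamma&\delta\end{smallmatrix}\right)\in\mathrm{SL}_2(\mathbb{Z})$ the form $Q\circ M$ has leading coefficient $Q(\alpha,\gamma)=a\alpha^2+b\alpha\gamma+c\gamma^2$, and $M\in\Gamma_0(p)$ exactly when $p\mid\gamma$. Second, since $\mathfrak{D}\equiv0\pmod p$, every $[a,b,c]\in\mathcal{L}_p(\mathfrak{D})$ has $p\mid a$, $p\mid b$, $p\nmid c$ (if $p\mid a$ then $\mathfrak{D}\equiv b^2$ forces $p\mid b$, and primitivity forces $p\nmid c$), so $Q(\alpha,\gamma)\equiv c\gamma^2\pmod p$. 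With these, (i) and (iii) follow quickly. For (i): $Q\bmod p$ is a nonzero degenerate form (its discriminant is $\mathfrak{D}\equiv0$), hence has a double zero $[\alpha:\gamma]\in\mathbb{P}^1(\mathbb{F}_p)$; lifting to a primitive $(\alpha,\gamma)$ and completing it to $M\in\mathrm{SL}_2(\mathbb{Z})$ gives $Q'=Q\circ M\in\mathcal{L}_p(\mathfrak{D})$ with $Q'\equiv Q\pmod{\mathrm{SL}_2(\mathbb{Z})}$. For (iii): the natural map is clearly surjective; if $Q_2=Q_1\circ M$ with $Q_1,Q_2\in\mathcal{L}_p(\mathfrak{D})$, then the leading coefficient of $Q_2$ equals $Q_1(\alpha,\gamma)\equiv c_1\gamma^2\pmod p$ and is divisible by $p$, and $p\nmid c_1$ forces $p\mid\gamma$, i.e. $M\in\Gamma_0(p)$, giving injectivity.

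For (ii), the hypothesis $\left(\tfrac{\mathfrak{D}/p^2}{p}\right)=1$ forces $v_p(\mathfrak{D})=2$; write $\mathfrak{D}=p^2D$ with $D$ a nonzero square mod $p$, and note $v_p(a)\ge2$ holds automatically. We may assume $v_p(a)=2$ (otherwise $Q$ already works), so $a_2:=a/p^2\in\mathbb{Z}_p^\times$ and $b_1:=b/p\in\mathbb{Z}$. Acting by $M=\left(\begin{smallmatrix}\alpha&\beta\\p&\delta\end{smallmatrix}\right)\in\Gamma_0(p)$ replaces $a$ by $Q(\alpha,p)$, which is divisible by $p^2$, and $p^{-2}Q(\alpha,p)=a_2\alpha^2+b_1\alpha+c$. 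This quadratic in $\alpha$ has discriminant $b_1^2-4a_2c=D$, a nonzero square mod $p$, so its reduction mod $p$ has two distinct roots, both nonzero since their product $c/a_2$ is a unit. Choosing $\alpha$ to be such a root (a unit, so $\gcd(\alpha,p)=1$ and $M$ can be completed) gives $p^3\mid Q(\alpha,p)$, i.e. a $\Gamma_0(p)$-translate of $Q$ with $p^3\mid a$.

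For (iv), the scaling $[a,b,c]\mapsto[a/p,b,pc]$ is the substitution $\tilde Q(v)=p^{-1}Q(D_pv)$ with $D_p=\mathrm{diag}(1,p)$, and $v_p(\mathfrak{D})=1$ forces $v_p(a)=1$, so $\tilde Q\in\mathcal{L}(\mathfrak{D})$ remains primitive. Well-definedness holds because $Q_2=Q_1\circ M$ with $M\in\Gamma_0(p)$ gives $\tilde Q_2=\tilde Q_1\circ(D_p^{-1}MD_p)$ where $D_p^{-1}MD_p=\left(\begin{smallmatrix}\alpha&p\beta\\\gamma/p&\delta\end{smallmatrix}\right)\in\mathrm{SL}_2(\mathbb{Z})$. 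For surjectivity, transitivity of $\mathrm{SL}_2(\mathbb{F}_p)$ on $\mathbb{P}^1(\mathbb{F}_p)$ lets one move any class of $\mathcal{L}(\mathfrak{D})/\mathrm{SL}_2(\mathbb{Z})$ to a representative $[A,B,C]$ whose mod-$p$ double zero sits at $[0:1]$, i.e. $p\mid B$ and $v_p(C)=1$; this is $\tilde Q$ of $[pA,B,C/p]\in\mathcal{L}_p(\mathfrak{D})$. For injectivity, if $\tilde Q_2=\tilde Q_1\circ N$ with $N\in\mathrm{SL}_2(\mathbb{Z})$, then each $\tilde Q_i\equiv(a_i/p)\,x^2\pmod p$ has its unique zero at $[0:1]$, so $N$ must fix $[0:1]$, forcing its upper-right entry to be divisible by $p$; hence $D_pND_p^{-1}\in\Gamma_0(p)$ and $Q_2=Q_1\circ(D_pND_p^{-1})$, so $Q_1\equiv Q_2\pmod{\Gamma_0(p)}$.

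The main obstacle is (ii): the crux is recognizing that the determinant-$p^2$ rescaling of the leading-coefficient form produces a binary quadratic form of discriminant exactly $\mathfrak{D}/p^2$ over $\mathbb{F}_p$, so that the splitting hypothesis $\left(\tfrac{\mathfrak{D}/p^2}{p}\right)=1$ is precisely the isotropy needed to raise $v_p(a)$ from $2$ to $3$; without split behaviour at $p$ no such translate exists. The remaining steps—lifting residues to coprime integers, completing columns to elements of $\Gamma_0(p)$, and verifying primitivity and discriminant of the images in (iv)—are routine but must be tracked with care.
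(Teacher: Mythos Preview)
Your proof is correct and follows essentially the same strategy as the paper: reduce the binary quadratic forms modulo $p$, analyze where they vanish, and use explicit $\mathrm{SL}_2(\mathbb{Z})$- or $\Gamma_0(p)$-transformations to move forms into the desired shape. Your presentation is somewhat more geometric---phrasing everything via the double zero on $\mathbb{P}^1(\mathbb{F}_p)$ and the conjugation $D_p N D_p^{-1}$---whereas the paper carries out the same steps by explicit coefficient computations; in particular, for (ii) you make transparent the key point that the rescaled form $[a/p^2,\,b/p,\,c]$ has discriminant $\mathfrak{D}/p^2$, so the hypothesis $\bigl(\tfrac{\mathfrak{D}/p^2}{p}\bigr)=1$ is exactly isotropy mod $p$, and for (iv) you supply the well-definedness and surjectivity that the paper leaves implicit (it cites \cite{GKZ87} and checks only injectivity).
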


\begin{proof}
The assertions (i), (ii), (iii) and (iv) have appeared respectively as Lemmas 1, 3, 2 of \cite{Gue00} and Proposition in \S I.1 of \cite{GKZ87}. For the readers' convenience, we present all of their proofs. 
For each $[a,b,c] \in \mathcal{L}(\mathfrak{D})$ with $b \not\equiv 0 \pmod{p}$, the assumption $\mathfrak{D} \equiv 0 \pmod{p}$ yields 
$a \not\equiv 0 \pmod{p}$. Then we put $\beta \equiv -b/(2a) \pmod{p}$ and  
\[
{
[a',b',c']:=[a,b,c]\circ 
\left(
\begin{array}{cc}
1 & \beta \\
0 & 1
\end{array}\right) \equiv [a,b,c] \pmod{{\rm SL}_2(\mathbb{Z})}. 
}
\]
We note that $b' = 2a \beta +b \equiv 0 \pmod{p}$. 
Hence we may assume that $b \equiv 0 \pmod{p}$. 
If $[a,b,c] \in \mathcal{L}(\mathfrak{D})$ satisfies $a \not\equiv 0 \pmod{p}$, then we easily see that $c \equiv 0 \pmod{p}$. Hence we have 
\[
{
[a,b,c] \equiv [a,b,c]\circ 
\left(
\begin{array}{cc}
p & p-1 \\
1 & 1
\end{array}\right) \in \mathcal{L}_p(\mathfrak{D}) \pmod{{\rm SL}_2(\mathbb{Z})}
}
\]
and we obtain the assertion (i). 
For each $Q=[a,b,c] \in \mathcal{L}_p(\mathfrak{D})$ with $\mathfrak{D}\equiv 0 \pmod{p^2}$, we easily see that $a \equiv 0 \pmod{p^2}$. 
If $Q'=[a',b',c']\in \mathcal{L}_p(\mathfrak{D})$ satisfies $Q'
=
Q
 \circ \left(\begin{smallmatrix}
\alpha & \beta \\
\gamma & \delta
\end{smallmatrix}\right)$ 
with some $\left(\begin{smallmatrix}
\alpha & \beta \\
\gamma & \delta
\end{smallmatrix}\right) \in \Gamma_0(p)
$,  
then the condition $\left({\mathfrak{D}/p^2 \over p}\right)=1$ yields that the quadratic form $[p^{-2}a, p^{-1}b, c] \in \mathcal{L}_p(\mathfrak{D}/p^2)$ satisfies 
\[
[p^{-2}a, p^{-1}b, c](\alpha, p^{-1}\gamma ) = (p^{-2}a) \alpha^2 + (p^{-1}b) \alpha (p^{-1}\gamma) + c(p^{-1}\gamma)^2 \equiv 0 \pmod{p}. 
\]
This equation implies the fact that $Q'$ satisfies $p^{-2}a' \equiv 0 \pmod{p}$, and hence we obtain the assertion (ii). 
For the proof of the assertion (iii), it suffices to show the injectivity of the mapping 
$[a,b,c] 
\,\,\mathrm{mod}\,\, \Gamma_0(p) 
\mapsto 
[a,b,c] \,\,\mathrm{mod}\,\, {\rm SL}_2(\mathbb{Z})
$. 
Indeed, for $[a,b,c]$, $[a',b',c'] \in \mathcal{L}_p(\mathfrak{D})$, if 
there exists $\left(\begin{smallmatrix}
\alpha & \beta \\
\gamma & \delta
\end{smallmatrix}\right) \in {\rm SL}_2(\mathbb{Z})
$ such that 
\[
[a',b', c'] =[a,b, c] \circ \left(
\begin{array}{cc}
\alpha & \beta \\
\gamma & \delta
\end{array}\right),\]
then we have $a'=a\alpha^2 +b \alpha \gamma + c \gamma^2$. Since $a, a', b \equiv 0 \pmod{p}$ and $\gcd(c,p)=1$, we obtain 
$\gamma \equiv 0 \pmod{p}$. Namely, $[a,b,c] \equiv [a',b',c'] \pmod{\Gamma_0(p)}$. 
In order to prove the assertion (iv), it also suffices to show that the injectivity of the mapping  
$[a,b,c] \,\,\mathrm{mod}\,\, \Gamma_0(p) 
\mapsto [p^{-1}a,b,pc] \,\, \mathrm{mod}\,\, {\rm SL}_2(\mathbb{Z})
$. 
Indeed, suppose that $[a,b,c]$, $[a',b',c'] \in \mathcal{L}_p(\mathfrak{D})$ 
satisfy 
\[
[p^{-1}a',b', pc'] =[p^{-1}a,b, pc] \circ \left(
\begin{array}{cc}
\alpha & \beta \\
\gamma & \delta
\end{array}\right)\]
with $\left(\begin{smallmatrix}
\alpha & \beta \\
\gamma & \delta
\end{smallmatrix}\right) \in {\rm SL}_2(\mathbb{Z})
$. The assumption $\mathfrak{D} \not\equiv 0 \pmod{p^2}$ yields $a \not\equiv 0 \pmod{p^2}$. 
Since $pc'=p^{-1}a \beta^2 + b\beta \delta + p c \delta^2$, 
we have $\beta \equiv 0 \pmod{p}$. Then we have 
\[
[a',b', c'] =[a,b, c]\circ \left(
\begin{array}{cc}
\alpha & p^{-1}\beta \\
p\gamma & \delta
\end{array}\right), 
\]
and hence $[a',b',c'] \equiv [a,b,c] \pmod{\Gamma_0(p)}$. 
We complete the proof. 
\end{proof}

As a consequence of Theorem IV, we have the following: 

\begin{prop} 
Under the same notation and assumptions as above,  
let
$\mathfrak{d}_0$ be a fixed fundamental discriminant with $(-1)^{k_0}\mathfrak{d}_0 >0$, $\mathfrak{d}_0 \equiv 0 \pmod{p}$ and $c_{|\mathfrak{d}_0|}(\vartheta_{\mathfrak{d}_0}(f_{2k_0})) \ne 0$ {\rm (cf. Lemma 2.4)}. 
Then for each integer $k \ge k_0$ and each fundamental discriminant $\mathfrak{d}$ with $(-1)^k \mathfrak{d}>0$, 
there exist an analytic neighborhood $\mathfrak{U}_0$ of $P_0=(2k_0, \omega^{2k_0}) \in \mathfrak{X}_{\rm alg}(R^{\rm ord})$ and an element ${\bf c}(\mathfrak{d}_0;\,{|\mathfrak{d}|}) \in (\widetilde{R}^{\rm ord})_{(\widetilde{P}_0)}$ such that 
\[
{\bf c}(\mathfrak{d}_0;\,{|\mathfrak{d}|})(\widetilde{P})={\Omega_{P} \over \Omega^{\epsilon}(P)}\,\left(1- \left({\,\mathfrak{d}\, \over p}\right) \beta_p(f_{2k})\, p^{-k}\right) c_{|\mathfrak{d}|}(
\vartheta_{\mathfrak{d}_0}(f_{2k}))
\]
for each $\widetilde{P} =(k,\omega^{k})\in \mathfrak{X}_{\rm alg}(\widetilde{R}^{\rm ord})$ lying over $P=(2k,\omega^{2k}) \in \mathfrak{U}_0$. 
\end{prop}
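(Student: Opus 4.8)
The plan is to realize the sought element as the $|\mathfrak{d}|$-th Fourier coefficient of Stevens' $\Lambda$-adic Shintani lifting, and to reduce the interpolation formula to a purely classical identity. Concretely, writing $\Theta_{\mathfrak{d}_0}=\sum_{m\ge 1}{\bf b}(\mathfrak{d}_0;\,m)\,q^m\in \widetilde{R}^{\rm ord}[[q]]$ as in Theorem IV, I would simply set
\[
{\bf c}(\mathfrak{d}_0;\,|\mathfrak{d}|):={\bf b}(\mathfrak{d}_0;\,|\mathfrak{d}|)\in \widetilde{R}^{\rm ord}\subseteq (\widetilde{R}^{\rm ord})_{(\widetilde{P}_0)}.
\]
By Theorem IV (ii), for every $\widetilde{P}=(k,\omega^{k})$ lying over $P=(2k,\omega^{2k})\in\mathfrak{U}_0$ one has ${\bf b}(\mathfrak{d}_0;\,|\mathfrak{d}|)(\widetilde{P})=\frac{\Omega_{P}}{\Omega^{\epsilon}(P)}\,c_{|\mathfrak{d}|}\big(\vartheta_{\mathfrak{d}_0}({\bf f}_{\rm ord}(P))\big)$, where ${\bf f}_{\rm ord}(P)=f_{2k}^{*}\in\mathscr{S}_{2k}(\Gamma_0(p))^{(1)}$ and $\vartheta_{\mathfrak{d}_0}$ is here the level-$p$ Shintani lifting of Theorem III. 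Thus the whole statement reduces to the classical identity
\[
c_{|\mathfrak{d}|}\big(\vartheta_{\mathfrak{d}_0}(f_{2k}^{*})\big)=\left(1-\left(\tfrac{\mathfrak{d}}{p}\right)\beta_p(f_{2k})\,p^{-k}\right)c_{|\mathfrak{d}|}\big(\vartheta_{\mathfrak{d}_0}(f_{2k})\big),
\]
relating the $|\mathfrak{d}|$-th coefficient of the level-$p$ Shintani lifting of $f_{2k}^{*}$ to that of the level-$1$ Shintani lifting of the level-$1$ eigenform $f_{2k}$.

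To establish this identity I would expand the left-hand side by the explicit formula of Theorem III with $N=p$. Since $p\mid\mathfrak{d}_0$ forces $\left(\tfrac{\mathfrak{d}_0}{p}\right)=0$, the $d=p$ term drops out and one is left with $c_{|\mathfrak{d}|}(\vartheta_{\mathfrak{d}_0}(f_{2k}^{*}))=\sum_{Q\in\mathcal{L}_p(\mathfrak{d}_0|\mathfrak{d}|)/\Gamma_0(p)}\chi_{\mathfrak{d}_0}(Q)\,r_Q(f_{2k}^{*})$. I would then substitute $f_{2k}^{*}=f_{2k}-\beta_p(f_{2k})\,f_{2k}(p\,\cdot\,)$ and use the linearity of the cycle integral $r_Q$. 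For the first summand, both the genus character $\chi_{\mathfrak{d}_0}$ and the cycle integral $r_Q(f_{2k})$ of the level-$1$ form factor through the ${\rm SL}_2(\mathbb{Z})$-orbit of $Q$; combining Lemma 2.5 (iii), which identifies $\mathcal{L}_p(\mathfrak{D})/\Gamma_0(p)$ with $\mathcal{L}_p(\mathfrak{D})/{\rm SL}_2(\mathbb{Z})$, with Lemma 2.5 (i), which identifies the latter with $\mathcal{L}(\mathfrak{D})/{\rm SL}_2(\mathbb{Z})$, recovers exactly $c_{|\mathfrak{d}|}(\vartheta_{\mathfrak{d}_0}(f_{2k}))$.

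For the second summand the key input is the behaviour of cycle integrals under $z\mapsto pz$. A direct substitution gives $r_Q\big(f_{2k}(p\,\cdot\,)\big)=p^{-k}\,r_{[p^{-1}a,\,b,\,pc]}(f_{2k})$ for $Q=[a,b,c]\in\mathcal{L}_p(\mathfrak{d}_0|\mathfrak{d}|)$, using that $[a,pb,p^2c]=p\,[p^{-1}a,b,pc]$ defines the same geodesic as $[p^{-1}a,b,pc]$. When $p\nmid\mathfrak{d}$ the discriminant satisfies $\mathfrak{D}\not\equiv 0\pmod{p^2}$, so Lemma 2.5 (iv) turns $Q\mapsto[p^{-1}a,b,pc]$ into a bijection $\mathcal{L}_p(\mathfrak{D})/\Gamma_0(p)\leftrightarrow\mathcal{L}(\mathfrak{D})/{\rm SL}_2(\mathbb{Z})$; tracking $\chi_{\mathfrak{d}_0}$ through this transform produces the factor $\left(\tfrac{\mathfrak{d}}{p}\right)$, so the second summand equals $-\left(\tfrac{\mathfrak{d}}{p}\right)\beta_p(f_{2k})\,p^{-k}\,c_{|\mathfrak{d}|}(\vartheta_{\mathfrak{d}_0}(f_{2k}))$. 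When $p\mid\mathfrak{d}$ one has $\mathfrak{D}\equiv 0\pmod{p^2}$ and $\left(\tfrac{\mathfrak{d}}{p}\right)=0$; here I would invoke Lemma 2.5 (ii) to normalize representatives and check that the resulting orbit sum cancels, consistently with the vanishing of $\left(\tfrac{\mathfrak{d}}{p}\right)$. Adding the two summands yields the identity, and feeding it into the interpolation formula of Theorem IV gives the proposition, with $\mathfrak{U}_0$ the neighborhood furnished by Theorem IV (shrunk if necessary) and $\Omega_{P_0}\ne 0$ guaranteed by the hypothesis $c_{|\mathfrak{d}_0|}(\vartheta_{\mathfrak{d}_0}(f_{2k_0}))\ne 0$ together with Theorem IV (i).

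The main obstacle is precisely the genus-character and lattice-orbit bookkeeping in the second summand: one must track how $\chi_{\mathfrak{d}_0}$ transforms under the scaling $Q\mapsto[p^{-1}a,b,pc]$ and verify, in both cases $p\nmid\mathfrak{d}$ and $p\mid\mathfrak{d}$, that the net effect is multiplication by $\left(\tfrac{\mathfrak{d}}{p}\right)$. This is the content encapsulated in Guerzhoy's lemmas and in the genus-character proposition of Gross-Kohnen-Zagier recalled in Lemma 2.5, the delicate point being the compatibility of the local character at $p$ with the scaling of the underlying binary quadratic form.
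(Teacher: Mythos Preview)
Your approach matches the paper's in the cases $p\nmid\mathfrak{d}$ and ($p\mid\mathfrak{d}$ with $\left(\tfrac{\mathfrak{d}_0|\mathfrak{d}|/p^2}{p}\right)=1$), but there is a genuine gap when $p\mid\mathfrak{d}$ and $\left(\tfrac{\mathfrak{d}_0|\mathfrak{d}|/p^2}{p}\right)=-1$. The lemma you invoke to ``normalize representatives and check that the resulting orbit sum cancels'' (Lemma~3.6\,(ii) in the paper) carries the hypothesis $\left(\tfrac{\mathfrak{D}/p^2}{p}\right)=1$; this hypothesis is essential to its proof and to the subsequent character cancellation, and it fails in the remaining case. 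Since $\mathfrak{d}_0$ and $\mathfrak{d}$ are fundamental discriminants both exactly divisible by $p$, the quantity $\mathfrak{d}_0|\mathfrak{d}|/p^2$ is a $p$-adic unit whose Legendre symbol can perfectly well be $-1$, so this case cannot be ignored. The paper explicitly flags that the direct cycle-integral argument breaks down here.

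The paper's remedy is not a refinement of the orbit calculation but a genuinely different manoeuvre: choose an auxiliary fundamental discriminant $\mathfrak{d}_1$ with $p\nmid\mathfrak{d}_1$ and $c_{|\mathfrak{d}_1|}(\vartheta_{\mathfrak{d}_0}(f_{2k_0}))\ne 0$, use the Kohnen relation (equation~(11)) to write
\[
c_{|\mathfrak{d}|}(\vartheta_{\mathfrak{d}_0}(f_{2k}))=\frac{c_{|\mathfrak{d}_1|}(\vartheta_{\mathfrak{d}}(f_{2k}))\cdot c_{|\mathfrak{d}_0|}(\vartheta_{\mathfrak{d}_0}(f_{2k}))}{c_{|\mathfrak{d}_1|}(\vartheta_{\mathfrak{d}_0}(f_{2k}))},
\]
and observe that each factor on the right now falls under one of the cases already handled. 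One then \emph{defines} ${\bf c}(\mathfrak{d}_0;|\mathfrak{d}|)$ as the corresponding quotient ${\bf b}(\mathfrak{d};|\mathfrak{d}_1|)\cdot{\bf b}(\mathfrak{d}_0;|\mathfrak{d}_0|)/{\bf b}(\mathfrak{d}_0;|\mathfrak{d}_1|)$. This is precisely why the statement places ${\bf c}(\mathfrak{d}_0;|\mathfrak{d}|)$ in the localization $(\widetilde{R}^{\rm ord})_{(\widetilde{P}_0)}$ rather than in $\widetilde{R}^{\rm ord}$, and why one may need to shrink $\mathfrak{U}_0$ to keep the denominator nonvanishing --- features your proposal, which sets ${\bf c}={\bf b}\in\widetilde{R}^{\rm ord}$ in all cases, does not account for.
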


\begin{proof}
The following proof is essentially the same as the one of the main theorem in \cite{Gue00}: 
By virtue of Theorem IV, there exists an element ${\bf b}(\mathfrak{d}_0;\,{|\mathfrak{d}|}) \in \widetilde{R}^{\rm ord}$
such that for each $\widetilde{P} \in \mathfrak{X}_{\rm alg}(\widetilde{R}^{\rm ord})$ 
lying over $P=(2k,\omega^{2k}) \in \mathfrak{U}_0$, we have
\begin{eqnarray*}
{\bf b}(|\mathfrak{d}_0|;\,\mathfrak{d}) (\widetilde{P})&=&{\Omega_{P} \over \Omega^{\epsilon}(P)}\,c_{|\mathfrak{d}|}(\vartheta_{\mathfrak{d}_0}(f_{2k}^{*})) \\
&=&{\Omega_{P} \over \Omega^{\epsilon}(P)}\,\sum_{Q \in \mathcal{L}_{p}(\mathfrak{d}_0|\mathfrak{d}|)/\Gamma_0(p)} \chi_{\mathfrak{d}_0}(Q)\, r_{Q}(f_{2k}^{*}). 
\end{eqnarray*} 
(I) Suppose that $\mathfrak{d} \not\equiv 0 \pmod{p}$. Then by (i), (iii) of Lemma 3.6, we have 
\begin{eqnarray*}
&& \sum_{Q \in \mathcal{L}_{p}(\mathfrak{d}_0|\mathfrak{d}|)/\Gamma_0(p)} \chi_{\mathfrak{d}_0}(Q)\, r_{Q}(f_{2k}^{*})
= \sum_{Q \in \mathcal{L}(\mathfrak{d}_0|\mathfrak{d}|)/{\rm SL}_2(\mathbb{Z})} \chi_{\mathfrak{d}_0}(Q)\, r_{Q}(f_{2k}) \hspace*{25mm} \\
&& \hspace*{0mm}- \beta_p(f_{2k}) \sum_{[a,b,c] \in \mathcal{L}_{p}(\mathfrak{d}_0|\mathfrak{d}|)/\Gamma_0(p)} \chi_{\mathfrak{d}_0}([a,b,c])\, 
\displaystyle\int_{C_{[a,b,c]}
} f_{2k}(pz)
(az^2+bz +c)^{k-1} dz.  
\end{eqnarray*}
Here we easily see that 
\[
\chi_{\mathfrak{d}_0}([a,b,c])=\left({\,\mathfrak{d}\,\over p}\right)\chi_{\mathfrak{d}_0}([p^{-1}a,b,pc])
\]
for each $[a,b,c] \in \mathcal{L}_{p}(\mathfrak{d}_0|\mathfrak{d}|)$, 
and hence it follows from (iv) of Lemma 3.6 that 
\begin{eqnarray*}
\lefteqn{\sum_{[a,b,c] \in \mathcal{L}_{p}(\mathfrak{d}_0|\mathfrak{d}|)/\Gamma_0(p)} \chi_{\mathfrak{d}_0}([a,b,c])\, 
\displaystyle\int_{C_{[a,b,c]}
} f_{2k}(pz)
(az^2+bz +c)^{k-1} dz} \\
&=&
\left({\,\mathfrak{d}\,\over p}\right)
\sum_{[a,b,c] \in \mathcal{L}(\mathfrak{d}_0|\mathfrak{d}|)/{\rm SL}_2(\mathbb{Z})} \chi_{\mathfrak{d}_0}([a,b,c])\, 
\\
&& \hspace*{10mm} \times \displaystyle\int_{C_{[a,b,c]}
} f_{2k}(pz)
\{a(pz)^2+b(pz) +c\}^{k-1} \cdot p^{1-k} dz \\
&=&
\left({\,\mathfrak{d}\,\over p}\right) p^{-k}
\sum_{Q=[a,b,c] \in \mathcal{L}(\mathfrak{d}_0|\mathfrak{d}|)/{\rm SL}_2(\mathbb{Z})} \chi_{\mathfrak{d}_0}(Q)\, r_{Q}(f_{2k}), 
\end{eqnarray*}
where in the second equation we have made use of the transformation law with respect to $z \mapsto p^{-1}z$. Therefore we obtain that ${\bf c}(\mathfrak{d}_0;\,{|\mathfrak{d}|}) :={\bf b}(\mathfrak{d}_0;\,{|\mathfrak{d}|}) \in \widetilde{R}^{\rm ord}$ satisfies the equation 
\begin{eqnarray*}
\lefteqn{
{\bf c}(\mathfrak{d}_0;\,{|\mathfrak{d}|})(\widetilde{P}) 
} \\
&=&{\Omega_{P} \over \Omega^{\epsilon}(P)}\,\left(1- \left({\,\mathfrak{d}\, \over p}\right) \beta_p(f_{2k})\, p^{-k}\right) \sum_{Q \in \mathcal{L}(\mathfrak{d}_0|\mathfrak{d}|)/{\rm SL}_2(\mathbb{Z})} \chi_{\mathfrak{d}_0}(Q)\, r_{Q}(f_{2k}) \\
&=&{\Omega_{P} \over \Omega^{\epsilon}(P)}\,\left(1- \left({\,\mathfrak{d}\, \over p}\right) \beta_p(f_{2k})\, p^{-k}\right) c_{|\mathfrak{d}|}(
\vartheta_{\mathfrak{d}_0}(f_{2k})).   
\end{eqnarray*}
(II) Suppose that $\mathfrak{d} \equiv 0 \pmod{p}$ and $\left({\mathfrak{d}_0|\mathfrak{d}|/p^2\over p}\right)=1$. Similarly to the case (I), in order to show that ${\bf c}(\mathfrak{d}_0;\,{|\mathfrak{d}|}):={\bf b}(\mathfrak{d}_0;\,{|\mathfrak{d}|})$ satisfies the desired property, it suffices to show the equation 
\begin{equation}
{\sum_{[a,b,c] \in \mathcal{L}_{p}(\mathfrak{d}_0|\mathfrak{d}|)/\Gamma_0(p)} \chi_{\mathfrak{d}_0}([a,b,c])\, 
\displaystyle\int_{C_{[a,b,c]}
} f_{2k}(pz)
(az^2+bz +c)^{k-1} dz}=0. 
\end{equation}
Indeed, we easily see that for each $s \in \mathbb{Z}/p\mathbb{Z}$, the cycle integrals on the left-hand side of the equation (12) are invariant under the translation $z \mapsto z+p^{-1}s$, and hence we have 
\begin{eqnarray*}
\lefteqn{
\sum_{[a,b,c] \in \mathcal{L}_{p}(\mathfrak{d}_0|\mathfrak{d}|)/\Gamma_0(p)} \chi_{\mathfrak{d}_0}([a,b,c])\, 
\displaystyle\int_{C_{[a,b,c]}
} f_{2k}(pz)
(az^2+bz +c)^{k-1} dz
} \\
&=& p^{-1} \sum_{[a,b,c] \in \mathcal{L}_{p}(\mathfrak{d}_0|\mathfrak{d}|)/\Gamma_0(p)} 
\left(\displaystyle\int_{C_{[a,b,c]}
} f_{2k}(pz)
(az^2+bz +c)^{k-1} dz \right)
\\
&& \times 
\sum_{s \in \mathbb{Z}/p\mathbb{Z}} 
\chi_{\mathfrak{d}_0}\!\left(\left[a,\,2(p^{-1}a) s + b,\,(p^{-2}a) s^2 + (p^{-1}b) s + c\right]\right). 
\end{eqnarray*}
Then it follows from (ii) of Lemma 3.6 and a simple calculation that   
\begin{eqnarray*}
\lefteqn{
\sum_{s \in \mathbb{Z}/p\mathbb{Z}} 
\chi_{\mathfrak{d}_0}\!\left(\left[a,\,2(p^{-1}a) s + b,\,(p^{-2}a) s^2 + (p^{-1}b) s + c\right]\right)
} \\ 
&=& \left({\mathfrak{d}_0/p \over a}\right) \sum_{s \in \mathbb{Z}/p\mathbb{Z}} \left({p \over (p^{-2}a) s + (p^{-1}b)s + c}\right) \\
&=& \left({\mathfrak{d}_0/p \over a}\right) 
\sum_{s \in \mathbb{Z}/p\mathbb{Z}} \left({p \over (p^{-1}b)s + c}\right)=0.
\end{eqnarray*}
Hence we obtain the desired equation (12). 
(III) Suppose that $\mathfrak{d} \equiv 0 \pmod{p}$ and $\left({\mathfrak{d}_0|\mathfrak{d}|/p^2\over p}\right)=-1$. We note that we cannot prove the assertion along the same lines as (I) and (II). 
However, we may fortunately take an alternative route as follows: 
We note that by virtue of Lemma 2.4 and the equation (11), there exists a fundamental discriminant $\mathfrak{d}_1$ with $(-1)^{k_0} \mathfrak{d}_1 >0$, 
 $\mathfrak{d}_1 \not\equiv 0 \pmod{p}$ and $c_{|\mathfrak{d}_1|}(\vartheta_{\mathfrak{d}_0}(f_{2{k_0}}))\ne 0$. 
Then by taking the $\mathfrak{d}$-th Shintani lifting 
 $\vartheta_{\mathfrak{d}}(f_{2k})$ as well as $\vartheta_{\mathfrak{d}_0}(f_{2k})$, the equation (9) yields 
\begin{eqnarray*}
c_{|\mathfrak{d}|}(\vartheta_{\mathfrak{d}_0}(f_{2k}))
&=& {
c_{|\mathfrak{d}_1|}(\vartheta_{\mathfrak{d}}(f_{2k})) \cdot c_{|\mathfrak{d}_0|}(\vartheta_{\mathfrak{d}_0}(f_{2k})) \over 
c_{|\mathfrak{d}_1|}(\vartheta_{\mathfrak{d}_0}(f_{2k}))} \\
&=&{
\left(1- \left({\,\mathfrak{d}_1\, \over p}\right) \beta_p(f_{2k})\, p^{-k}\right)
c_{|\mathfrak{d}_1|}(\vartheta_{\mathfrak{d}}(f_{2k})) \cdot c_{|\mathfrak{d}_0|}(\vartheta_{\mathfrak{d}_0}(f_{2k})) \over 
\left(1- \left({\,\mathfrak{d}_1\, \over p}\right) \beta_p(f_{2k})\, p^{-k}\right)
c_{|\mathfrak{d}_1|}(\vartheta_{\mathfrak{d}_0}(f_{2k}))},  
\end{eqnarray*}
where in the second equation of the above, we note that the $p$-adic interpolation properties of 
the numerator and the denominator 
have been already proved at the previous steps (I) and (II). 
Therefore,  
by taking a smaller analytic neighborhood $
\mathfrak{U}_0$ of $P_0 \in \mathfrak{X}_{\rm alg}(R^{\rm ord})$ if it's necessary to avoid possible  vanishing of the denominator, we obtain that 
the element 
\[
{\bf c}(\mathfrak{d}_0;\,{|\mathfrak{d}|}):={
{\bf b}(\mathfrak{d};\,{|\mathfrak{d}_1|}) \cdot 
{\bf b}(\mathfrak{d}_0;\,{|\mathfrak{d}_0|})
\over {\bf b}(\mathfrak{d}_0;\,{|\mathfrak{d}_1|})} \in (\widetilde{R}^{\rm ord})_{(\widetilde{P}_0)}
\]
defines a quotient of analytic functions on $\mathfrak{U}_0$ and satisfies the desired interpolation property. We complete the proof. 
\end{proof}

\section{Main results}

In this section, we construct a $\Lambda$-adic Duke-Imamo{\={g}}lu lifting for the universal ordinary $p$-stabilized newform ${\bf f}_{\rm ord} \in R^{\rm ord}[[q]]$. 
Therefore, we first introduce a suitable $p$-stabilization process for the Duke-Imamo{\={g}}lu lifting of $p$-ordinary 
Hecke eigenforms in $\mathscr{S}_{2k}({\rm SL}_2(\mathbb{Z}))$. 
Moreover, to consider the $p$-adic interpolation problem in the sequel, we give an explicit form of its Fourier expansion. 

\vspace*{3mm}
Suppose that positive integers $n$ and $k$ are fixed as $n \equiv k \pmod{2}$.  
Let $f
 \in \mathscr{S}_{2k}({\rm SL}_2(\mathbb{Z}))$ be a normalized Hecke eigenform that is ordinary at $p$, and $h
\in \mathscr{S}_{k+1/2}^{+}(\Gamma_0(4))^{(1)}$ a corresponding Hecke eigenform via the Shimura correspondence. As mentioned in \S\S 2.1, the Duke-Imamo{\={g}}lu lifting  
${\rm Lift}^{(2n)}(f) \in \mathscr{S}_{k+n}({\rm Sp}_{4n}(\mathbb{Z}))$ 
is characterized as a Hecke eigenform such that for each prime $l$, the Satake parameter $(\psi_0(l),\,\psi_1(l),\,\cdots,\,\psi_{2n}(l)) \in (\overline{\mathbb{Q}}^{\times})^{2n+1}/W_{2n}$ is written explicitly in terms of 
the ordered pair $(\alpha_{l}(f), \beta_{l}(f))$ 
satisfying 
$X^2 - a_l(f) X + l^{2k-1}=(X- \alpha_l(f))(X -\beta_l(f))$ and $v_l(\alpha_l(f)) \le v_l(\beta_l(f))$ (cf. Remark 2.4). In particular, if $l=p$, the ordinarity condition implies that $\alpha_p(f)$ is a $p$-adic unit, and hence 
$v_p(\beta_p(f))=2k-1$. Then we recall that 
the Hecke polynomial $\Phi_p(Y)\in \overline{\mathbb{Q}}_p^{\times}[Y]$ associated with ${\rm Lift}^{(2n)}(f)$ at $p$ is 
defined as 
\[
\Phi_p(Y):=(Y- \psi_0(p))\prod_{r=1}^{2n}\, \prod_{1 \le i_1 < \cdots < i_r \le 2n}
(Y-\psi_0(p)\psi_{i_1\!}(p) \cdots \psi_{i_r\!}(p)).  
\] 
We note that the equation 
(7) yields that $\psi_0(p)=p^{nk-n(n+1)/2}$ and the product
\begin{equation}
\psi_0(p) \prod_{i=1}^{n} \psi_i(p) 
=\alpha_p(f)^{n}
\end{equation}
is a unique unit $p$-adic root of $\Phi_p(Y)$. Put 
\begin{eqnarray*}
\Phi_p^{*}(Y)
&:=&
\Phi_p(Y) \cdot \{(Y-\psi_0(p))(Y - \psi_0(p) 
\prod_{i=1}^{n} \psi_i(p)
 )
\}^{-1} \\
&=&\prod_{r=1}^{2n}\, \prod_{\scriptstyle 1 \le i_1 < \cdots < i_r \le 2n, 
\atop {\scriptstyle (i_1,\cdots,i_n) \ne (1,\cdots,n)}}
(Y-\psi_0(p)\psi_{i_1\!}(p) \cdots \psi_{i_r\!}(p)), 
\end{eqnarray*}
and
\begin{eqnarray*}
\Psi_p^{*}(Y) 
&:=&
(Y- \psi_0(p) \left(\prod_{i=1}^{n} \psi_i(p)\right) \psi_{2n}(p))  \\
&& \times \prod_{j=1}^{n-1} (Y - \psi_0(p) \left(\prod_{i=1}^{n} \psi_i(p)\right) \psi_{2n-j}(p)\psi_{2n-j+1}(p)) \nonumber \\
&& \times
 (Y - \psi_0(p) \left(\prod_{i=1}^{n-1} \psi_i(p)\right) \psi_{2n}(p)). \nonumber 
\end{eqnarray*}
Obviously, we have $\Psi_p^{*}(Y)\, |\, \Phi_p^{*}(Y)\, |\, \Phi_p(Y)$ and it follows from the equation (7) that 
$\Phi_p^{*}(Y)$ and $\Psi_p^{*}(Y)$ can be taken of the forms 
in Theorem 1.3. 

\vspace*{3mm}
On the other hand, 
the following Hecke operators at $p$ are fundamental in the $p$-adic theory of Siegel modular forms of arbitrary genus $g \ge 1$:  
\[
U_{p,i}:=
\left\{\begin{array}{ll}
{I_{2g}\, 
{\rm diag}(\underbrace{1,\cdots,1}_{g},\underbrace{p,\cdots,p}_{g})
\, I_{2g}}, & \textrm{ if }i=0, \\[5mm]
{I_{2g}\, 
{\rm diag}(\underbrace{1,\cdots,1}_{i}\underbrace{p,\cdots,p}_{g-i},
\underbrace{p^2,\cdots,p^2}_{i}\underbrace{p,\cdots,p}_{g-i})
\, I_{2g}}, & 
\textrm{ if }1 \le i \le g-1, 
\end{array}\right.
\]
where $I_{2g}
:=\{M \in {\rm GSp}_{2g}(\mathbb{Z}_p)\, | \, M \textrm{ mod }p \in {\rm B}_{2g}(\mathbb{Z}/p\mathbb{Z})\}$. 
We note that these Hecke operators $U_{p,0}$, $U_{p,1},\,\cdots,\, U_{p,g-1}$ generate the Hecke algebra 
$$\mathscr{H}_p(I_{2g}, {\rm S}_{2g}):=\{\, I_{2g} M I_{2g}\, \,|\,M \in I_{2g} \backslash {\rm S}_{2g}
 / I_{2g}\,\}, $$
where ${\rm S}_{2g} \subset {\rm GSp}_{2g}(\mathbb{Q}_p)
$ is a semi-group such that  
\[
[I_{2g} \cap M^{-1}{\rm S}_{2g}M : I_{2g}],\, 
[I_{2g} \cap M^{-1}{\rm S}_{2g}M : M^{-1}{\rm S}_{2g}M] < +\infty
\] 
for all $M \in {\rm S}_{2g}$. 
In particular, we are interested in the operator $U_{p,0}$ which plays a central role in these operators. 
For instance, if $F=
\sum_{T \ge 0} A_{T}(F) q^T \in \mathscr{M}_{\kappa}(\Gamma_0(N))^{(g)}$ with $\kappa \ge 0$ and $N \ge 1$, 
the action of $U_{p,0}$ on $F$ admits the Fourier expansion
\begin{equation}
F|_{\kappa} U_{p,0}=\sum_{T \ge 0} A_{pT}(F) q^T, 
\end{equation}
which can be regarded as a generalization of the Atkin-Lehner $U_p$-operator acting on elliptic modular forms. 
Then we easily see that 
\[
F |_{\kappa} U_{p,0} \in \left\{
\begin{array}{ll}
\mathscr{M}_{\kappa}(\Gamma_0(N))^{(g)}, & \textrm{ if }p \,|\, N, \\[2mm]
\mathscr{M}_{\kappa}(\Gamma_0(Np))^{(g)}, & \textrm{ if }p \!\!\not|\, N,
\end{array}
\right.
\]
and the action of $U_{p,0}$ commutes with those of all Hecke operators at each prime $l \ne p$. 
For further details on the operator $U_{p,0}$
, see \cite{A-Z95,Boe05}. 

\vspace*{3mm}
Now, we define a $p$-stabilization of ${\rm Lift}^{(2n)}(f)$ by 
\begin{equation}
{\rm Lift}^{(2n)}(f)^{*}:={\Psi_p^{*}(\alpha_p(f)^n) \over \Phi_p^{*}(\alpha_p(f)^n)}\cdot {\rm Lift}^{(2n)}(f)|_{k+n}\,\Phi_p^{*}(U_{p,0}). 
\end{equation}
Obviously, the equation (13) yields that 
$$
{\Psi_p^{*}(\alpha_p(f)^n) \over 
\Phi_p^{*}(\alpha_p(f)^n)}
\ne 0. 
$$
In particular, if $n=1$, we have $\Psi_p^{*}(Y
) = \Phi_p^{*}(Y
)=(Y-p^k)(Y-\beta_p(f))$, and hence ${\Psi_p^{*}(\alpha_p(f)) /
\Phi_p^{*}(\alpha_p(f))}=1$. 
It also follows immediately from the above-mentioned properties of $U_{p,0}$ 
that 
${\rm Lift}^{(2n)}(f)^{*} \in \mathscr{S}_{k+n}(\Gamma_0(p))^{(2n)}$ is a Hecke eigenform such that 
for each prime $l \ne p$, the Hecke eigenvalues coincide with those of ${\rm Lift}^{(2n)}(f)$. 

\vspace*{3mm}
Then the Fourier expansion of ${\rm Lift}^{(2n)}(f)^{*}$ is written explicitly as follows: 

\begin{thm}
Under the same notation and assumptions as above, 
if $0<T \in {\rm Sym}_{2n}^{*}(\mathbb{Z})$ satisfies $\mathfrak{D}_T
=\mathfrak{d}_{T}\mathfrak{f}_{T}^2$, then we have 
\begin{eqnarray*}
A_{T}({\rm Lift}^{(2n)}(f)^{*})
&=& 
\left( 
1 - \left(
{\mathfrak{d}_T \over p}
\right)
\beta_p(f) p^{-k} \right) 
c_{|\mathfrak{d}_T|}(h)   \\
&& \times\, \alpha_p(f)^{v_p(\mathfrak{f}_T)+n(n+1)} \prod_{\scriptstyle \,\,l | \mathfrak{f}_{T}, \atop {\scriptstyle l \ne p}} 
\alpha_l(f)^{v_l(\mathfrak{f}_T)} F_l(T;\, l^{-k-n}\beta_l(f)). 
\end{eqnarray*}
\end{thm}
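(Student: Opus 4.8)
The plan is to strip off all data away from $p$, reduce to a single local identity at $p$, and evaluate that identity through the generating series of Siegel series along the dilation tower $T,pT,p^2T,\dots$. Writing $\Phi_p^{*}(Y)=\sum_{j\ge0}e_jY^j$ and iterating the Fourier-coefficient formula (14) for $U_{p,0}$, the definition (15) gives
\[
A_T({\rm Lift}^{(2n)}(f)^{*})=\frac{\Psi_p^{*}(\alpha_p(f)^n)}{\Phi_p^{*}(\alpha_p(f)^n)}\sum_{j\ge0}e_j\,A_{p^jT}({\rm Lift}^{(2n)}(f)).
\]
Substituting Ikeda's formula (6) and using that $T\mapsto p^jT$ multiplies $\det(2T)$ by $p^{2nj}$, so that $\mathfrak{d}_{p^jT}=\mathfrak{d}_T$ and $\mathfrak{f}_{p^jT}=p^{nj}\mathfrak{f}_T$, the factor $c_{|\mathfrak{d}_T|}(h)$ and the local factors at all primes $l\ne p$ are independent of $j$ and come out of the sum. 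The theorem thus reduces to the local identity
\[
\frac{\Psi_p^{*}(\alpha_p(f)^n)}{\Phi_p^{*}(\alpha_p(f)^n)}\sum_{j\ge0}e_j\,\alpha_p(f)^{nj}F_p(p^jT;\beta_p(f)p^{-k-n})=\left(1-\left(\frac{\mathfrak{d}_T}{p}\right)\beta_p(f)p^{-k}\right)\alpha_p(f)^{n(n+1)}.
\]

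Next I would isolate the linear factor. Put $X=\beta_p(f)p^{-k-n}$ and pass from $F_p$ to the full Siegel series $b_p$ via (4), writing
\[
F_p(p^jT;X)=\frac{1-\left(\frac{\mathfrak{d}_T}{p}\right)p^nX}{(1-X)\prod_{i=1}^n(1-p^{2i}X^2)}\,b_p(p^jT;X).
\]
The two outer factors depend on $T$ only through $\mathfrak{d}_T$, hence are identical for $T$ and every $p^jT$; at $X=\beta_p(f)p^{-k-n}$ the numerator equals exactly $1-\left(\frac{\mathfrak{d}_T}{p}\right)\beta_p(f)p^{-k}$, the factor sought on the right. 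Cancelling it reduces everything to the ramification-independent claim that $\sum_{j\ge0}e_j\,\alpha_p(f)^{nj}b_p(p^jT;X)$ equals $\alpha_p(f)^{n(n+1)}\,\Phi_p^{*}(\alpha_p(f)^n)\Psi_p^{*}(\alpha_p(f)^n)^{-1}(1-X)\prod_{i=1}^n(1-p^{2i}X^2)$, a quantity that no longer sees the conductor $\mathfrak{f}_T$ at $p$.

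The heart of the matter is the dilation behaviour of the genus-$2n$ Siegel series. Granting the explicit scaling recursion for $b_p(p^jT;X)$ (Katsurada's formula together with the functional equation (5); cf. \cite{Kat99}), I would form the generating series
\[
\mathcal{G}(W):=\sum_{j\ge0}\alpha_p(f)^{nj}\,b_p(p^jT;X)\,W^j,
\]
and identify it as a rational function of $W$ whose denominator is $\prod_\mu(1-\mu W)$, the product running over the spinor products $\mu=\psi_0(p)\psi_{i_1}(p)\cdots\psi_{i_r}(p)$; the specialization $X=\beta_p(f)p^{-k-n}$ is precisely what forces these poles, since by (7) one has $\psi_{2n}(p)=\beta_p(f)p^{n-k}$ and the evaluation pins the remaining parameters. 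Writing $\mathcal{G}(W)=\sum_\mu \sigma_\mu/(1-\mu W)$ in partial fractions, the target sum becomes $\sum_\mu\sigma_\mu\,\Phi_p^{*}(\mu)$; and since by (13) we have $\Phi_p^{*}(Y)=\Phi_p(Y)/\{(Y-\psi_0(p))(Y-\alpha_p(f)^n)\}$, which vanishes at every spinor product except the two omitted ones $\psi_0(p)$ and $\alpha_p(f)^n=\psi_0(p)\prod_{i=1}^n\psi_i(p)$, all summands drop out save the boundary residues attached to these two parameters.

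What survives is then matched against the target. The normalizing factor $\Psi_p^{*}(\alpha_p(f)^n)/\Phi_p^{*}(\alpha_p(f)^n)$, read from the factorizations of $\Phi_p^{*}$ and $\Psi_p^{*}$ in Theorem 1.3, is built exactly to cancel the leftover denominators and to retain the semi-ordinary contribution, and together with the exponent $nk-n(n+1)/2$ of $\psi_0(p)$ from (7) it produces the pure unit power $\alpha_p(f)^{n(n+1)}$ with no stray power of $p$; reinstating $c_{|\mathfrak{d}_T|}(h)$ and $\prod_{l\ne p}\alpha_l(f)^{v_l(\mathfrak{f}_T)}F_l(T;l^{-k-n}\beta_l(f))$ recovers the stated coefficient. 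I expect the genuine obstacle to be the third step: establishing the dilation recursion for $b_p$ in genus $2n$, pinning down the exact denominator of $\mathcal{G}(W)$, and showing the boundary residues collapse as claimed—this forces one to separate the cases $\left(\frac{\mathfrak{d}_T}{p}\right)=0,\pm1$ (i.e. $p$ ramified, inert or split in $\mathbb{Q}(\sqrt{\mathfrak{D}_T})$) and to verify the ramification-independence asserted above. The remaining telescoping and constant-chasing, while intricate, are mechanical once the generating series is known.
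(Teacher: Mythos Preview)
Your overall route coincides with the paper's: strip off the primes $l\ne p$ using (6) and (14), reduce to a single local identity at $p$, convert $F_p$ to the full Siegel series $b_p$ via (4), and then invoke the known structure of the generating series $\sum_j b_p(p^jT;X)W^j$. The paper carries out exactly this reduction (writing the coefficients of $\Phi_p^{*}$ as elementary symmetric polynomials in the spinor products, then substituting $X=\beta_p(f)p^{-k-n}$), and for general $n$ it disposes of the resulting $b_p$-identity with a bare citation to Zharkovskaya and Kitaoka rather than any computation; your partial-fraction argument is essentially an unpacking of what those references contain, and your identification of the ``genuine obstacle'' matches where the paper itself stops proving and starts citing.

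Two small points of difference. First, the paper treats $n=1$ separately by inserting Kaufhold's explicit closed form for $F_p^{(2)}(T;X)$ and telescoping the four-term sum by hand, which gives an honest self-contained proof in genus $2$; you do not isolate this case. Second, your partial-fraction step tacitly assumes the spinor products $\psi_0(p)\psi_{i_1}(p)\cdots\psi_{i_r}(p)$ are pairwise distinct, so that $\mathcal{G}(W)$ has only simple poles; for the Ikeda lift with parameters (7) there are genuine coincidences among these products (e.g.\ $\psi_i(p)\psi_{2n+1-i}(p)=\alpha_p(f)\beta_p(f)p^{1-2k}$ is independent of $i$), so the decomposition must be handled with generalized eigenspaces or a limit argument. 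This does not invalidate the strategy, but it is a detail you would need to address in a full write-up---and it is exactly the sort of thing hidden inside the Zharkovskaya--Kitaoka citation in the paper as well.
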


\begin{proof}
By virtue of the equations (6) and (13), 
we obtain 
\begin{eqnarray*}
\lefteqn{
A_{T}({\rm Lift}^{(2n)}(f)^{*})} \\
&=&
{\Psi_p^{*}(\alpha_p(f)^n) \over \Phi_p^{*}(\alpha_p(f)^n)}\,c_{|\mathfrak{d}_{T}|}(h) 
\prod_{\scriptstyle \,l\, | \mathfrak{f}_{T}, \atop {\scriptstyle l \ne p}} 
\alpha_l(f)^{v_l(\mathfrak{f}_T)}\, F_l(T;\,l^{-k-n}\beta_l(f)) 
\\
&& \times
\sum_{j=0}^{2^{2n}-2} (-1)^j 
s_j\!\left(\left\{\psi_0(p)\psi_{i_1}\!(p) \cdots \psi_{i_r}\!(p)\, \left|\, 
\begin{array}{c}
1\le r \le 2n, \\
1 \le i_1 < \cdots < i_r \le 2n, \\
(i_1,\cdots,i_n)\ne (1,\cdots,n)
\end{array}\right.
\right\}\right) \\
&& \hspace*{12mm}\times \alpha_p(f)^{v_p(\mathfrak{f}_T)+n(2^{2n}-2)-nj}
F_p(p^{2^{2n}-2-j}T;\,p^{-k-n} \beta_p(f)) \\
&=&
c_{|\mathfrak{d}_{T}|}(h) 
\alpha_p(f)^{v_p(\mathfrak{f}_T)+n(n+1)} 
\prod_{\scriptstyle \,l\, | \mathfrak{f}_{T}, \atop {\scriptstyle l \ne p}} 
\alpha_l(f)^{v_l(\mathfrak{f}_T)}\, F_l(T;\,l^{-k-n}\beta_l(f)) 
\\
&& \times 
{\alpha_p(f)^{-n(n+1)}\Psi_p^{*}(\alpha_p(f)^n) \over 
\alpha_p(f)^{-n(2^{2n}-2)}
\Phi_p^{*}(\alpha_p(f)^n)} \\
&& \times
\sum_{j=0}^{2^{2n}-2} (-1)^j 
s_j\!\left(\left\{\alpha_p(f)^{-n}\psi_0(p)\psi_{i_1}\!(p) \cdots \psi_{i_r}\!(p)\, \left|\, 
\begin{array}{c}
1\le r \le 2n, \\
1 \le i_1 < \cdots < i_r \le 2n, \\
(i_1,\cdots,i_n)\ne (1,\cdots,n)
\end{array}\right.
\right\}\right) \\
&& \hspace*{12mm}\times 
F_p(p^{2^{2n}-2-j}T;\,p^{-k-n} \beta_p(f)), 
\end{eqnarray*}
where $s_j(\{X_1,\cdots,X_{2^{2n}-2}\})$ denotes the $j$-th elementary symmetric polynomial in variables $X_1,\,\cdots,\,X_{2^{2n}-2}$. Here by the equation (7), we easily see that 
$\alpha_p(f)^{-n}\psi_0(p)=p^{n(n+1)/2}\{\beta_p(f) p^{-k-n}\}^n$,  
\[
\psi_i(p)=\left\{\begin{array}{ll}
p^{-i}\{\beta_p(f) p^{-k-n}\}^{-1}, & \textrm{if }1 \le i \le n, \\[2mm]
p^{i}\{\beta_p(f) p^{-k-n}\}, & \textrm{if }n+1 \le i \le 2n,   
\end{array}\right.
\]
and \begin{eqnarray*}
\lefteqn{
\alpha_p(f)^{-n(n+1)}\Psi_p^{*}(\alpha_p(f)^n)
} \\
&=&
(1-p^{2n} \{\beta_p(f) p^{-k-n}\})\prod_{i=1}^{n}(1- p^{2n+2i-1}\{\beta_p(f) p^{-k-n}\}^2). 
\end{eqnarray*}
Put 
\[
\xi_{p,i}(X):=\left\{\begin{array}{ll}
p^{n(n+1)/2}X^n,  & \textrm{if }i =0, \\[2mm]
p^{-i}X^{-1}, & \textrm{if }1 \le i \le n, \\[2mm]
p^{i}X, & \textrm{if }n+1 \le i \le 2n,   
\end{array}\right.
\]
Hence it suffices to show that the equation  
\begin{eqnarray*}
\lefteqn{(1-p^{2n}X)\prod_{i=1}^{n}(1- p^{2n+2i-1}X^2) } \\
&\times& \hspace*{-2mm}\sum_{j=0}^{2^{2n}-2} (-1)^j  
s_j
\!\!\left(\left\{\xi_{p,0}(X)\xi_{p,i_1\!}(X) \cdots \xi_{p,i_r\!}(X)\, \left|\, 
\begin{array}{c}
1\le r \le 2n, \\
1 \le i_1 < \cdots < i_r \le 2n, \\
(i_1,\cdots,i_n)\ne (1,\cdots,n)
\end{array}\right.
\right\}\right) 
\nonumber \\[3mm]
&& \hspace*{10mm}
\times F_p(p^{2^{2n}-2-j}T;\,X) \\[3mm]
&=&\left(1 - \left(
{\mathfrak{d}_{T} \over p}
\right)
 p^n X\right)\prod_{r=1}^{2n}\, \prod_{\scriptstyle 1 \le i_1 < \cdots < i_r \le 2n, 
\atop {\scriptstyle (i_1,\cdots,i_n) \ne (1,\cdots,n)}}
{(1-\xi_{p,0}(X)\xi_{p,i_1\!}(X) \cdots \xi_{p,i_r\!}(X))} \nonumber
\end{eqnarray*}
holds for each $T \in {\rm Sym}_{2n}^{*}(\mathbb{Z})$ with $T>0$. 
First, we prove the assertion for $n=1$. 
In that case, 
it suffices to show the equation 
\[
F_p(p^2 T;\, X)
-(p^2 X+p^3 X^2) F_p(p T;\,X) 
+p^5 X^3 F_p(T;\,X)
=1 - \left(
{\mathfrak{d}_{T} \over p}
\right)
 p X.
 \]
Indeed, for each $T \in {\rm Sym}_2^{*}(\mathbb{Z})$ with $T >0$, the polynomial $F_p(T;\,X)$ admits the explicit form 
\begin{eqnarray*}
\lefteqn{
F_p(T;\,X) } \\
&=& \sum_{i=0}^{v_p(\mathfrak{m}_T)} (p^2 X)^i \left\{ \sum_{j=0}^{v_p(\mathfrak{f}_T) - i} (p^3 X^2)^j
-\left({\mathfrak{d}_T \over p}\right)\,pX 
\sum_{j=0}^{v_p(\mathfrak{f}_T) - i-1} (p^3 X^2)^j \right\}, 
\end{eqnarray*}
where $\mathfrak{m}_T = \max\{ 0<m \in \mathbb{Z} \,|\, m^{-1} T \in {\rm Sym}_2^{*}(\mathbb{Z})\}$ (cf. \cite{Kau59
}). Therefore we have 
\begin{eqnarray*}
\lefteqn{
F_p(p^2 T;\, X)
-(p^2 X+p^3 X^2) F_p(p T;\,X) 
+p^5 X^3 F_p(T;\,X) } \\
&=& \sum_{i=0}^{v_p(\mathfrak{m}_T)+2} (p^2 X)^i 
\left\{ \sum_{j=0}^{v_p(\mathfrak{f}_T) - i+2} (p^3 X^2)^j
-\left(
{\mathfrak{d}_{T} \over p}
\right)\,pX 
\sum_{j=0}^{v_p(\mathfrak{f}_T) - i+1} (p^3 X^2)^j \right\} \\
&& \hspace*{5mm} -\sum_{i=0}^{v_p(\mathfrak{m}_T)+1} (p^2 X)^{i+1} 
\left\{ \sum_{j=0}^{v_p(\mathfrak{f}_T) - i+1} (p^3 X^2)^j
-\left(
{\mathfrak{d}_{T} \over p}
\right)\,pX 
\sum_{j=0}^{v_p(\mathfrak{f}_T) - i} (p^3 X^2)^j \right\} \\
&& - \sum_{i=0}^{v_p(\mathfrak{m}_T)+1} (p^2 X)^i 
\left\{ \sum_{j=0}^{v_p(\mathfrak{f}_T) - i+1} (p^3 X^2)^{j+1}
-\left(
{\mathfrak{d}_{T} \over p}
\right)\,pX 
\sum_{j=0}^{v_p(\mathfrak{f}_T) - i} (p^3 X^2)^{j+1} \right\} \\
&& \hspace*{5mm} + \sum_{i=0}^{v_p(\mathfrak{m}_T)} (p^2 X)^{i+1} 
\left\{ \sum_{j=0}^{v_p(\mathfrak{f}_T) - i} (p^3 X^2)^{j+1}
-\left(
{\mathfrak{d}_{T} \over p}
\right)\,pX 
\sum_{j=0}^{v_p(\mathfrak{f}_T) - i-1} (p^3 X^2)^{j+1} \right\} \\
&=& \sum_{i=0}^{v_p(\mathfrak{m}_T)+2} (p^2 X)^i 
\left\{ \sum_{j=0}^{v_p(\mathfrak{f}_T) - i+2} (p^3 X^2)^j
-\left(
{\mathfrak{d}_{T} \over p}
\right)\,pX 
\sum_{j=0}^{v_p(\mathfrak{f}_T) - i+1} (p^3 X^2)^j \right\} \\
&& \hspace*{5mm} -\sum_{i=1}^{v_p(\mathfrak{m}_T)+2} (p^2 X)^{i} 
\left\{ \sum_{j=0}^{v_p(\mathfrak{f}_T) - i+2} (p^3 X^2)^j
-\left(
{\mathfrak{d}_{T} \over p}
\right)\,pX 
\sum_{j=0}^{v_p(\mathfrak{f}_T) - i+1} (p^3 X^2)^j \right\} \\
&& - \sum_{i=0}^{v_p(\mathfrak{m}_T)+1} (p^2 X)^i 
\left\{ \sum_{j=0}^{v_p(\mathfrak{f}_T) - i+1} (p^3 X^2)^{j+1}
-\left(
{\mathfrak{d}_{T} \over p}
\right)\,pX 
\sum_{j=0}^{v_p(\mathfrak{f}_T) - i} (p^3 X^2)^{j+1} \right\} \\
&& \hspace*{5mm} + \sum_{i=1}^{v_p(\mathfrak{m}_T)+1} (p^2 X)^i 
\left\{ \sum_{j=0}^{v_p(\mathfrak{f}_T) - i+1} (p^3 X^2)^{j+1}
-\left(
{\mathfrak{d}_{T} \over p}
\right)\,pX 
\sum_{j=0}^{v_p(\mathfrak{f}_T) - i} (p^3 X^2)^{j+1} \right\} \\ %
&=& 
\left\{ \sum_{j=0}^{v_p(\mathfrak{f}_T)+2} (p^3 X^2)^j
-\left(
{\mathfrak{d}_{T} \over p}
\right)\,pX 
\sum_{j=0}^{v_p(\mathfrak{f}_T)+1} (p^3 X^2)^j \right\} \\
&& \hspace*{5mm} - \left\{ \sum_{j=0}^{v_p(\mathfrak{f}_T)+1} (p^3 X^2)^{j+1}
-\left(
{\mathfrak{d}_{T} \over p}
\right)\,pX 
\sum_{j=0}^{v_p(\mathfrak{f}_T)} (p^3 X^2)^{j+1} \right\} \\
&=& 
\left\{ \sum_{j=0}^{v_p(\mathfrak{f}_T)+2} (p^3 X^2)^j
-\left(
{\mathfrak{d}_{T} \over p}
\right)\,pX 
\sum_{j=0}^{v_p(\mathfrak{f}_T)+1} (p^3 X^2)^j \right\} \\
&& \hspace*{5mm} - \left\{ \sum_{j=1}^{v_p(\mathfrak{f}_T)+2} (p^3 X^2)^j
-\left(
{\mathfrak{d}_{T} \over p}
\right)\,pX 
\sum_{j=1}^{v_p(\mathfrak{f}_T)+1} (p^3 X^2)^j \right\} \\
&=& 1 - \left(
{\mathfrak{d}_{T} \over p}
\right) p X.  
\end{eqnarray*}
For each $n >1$, it follows from the equation (4) that the desired equation is equivalent to the following: 
\begin{eqnarray*}
\lefteqn{\sum_{j=0}^{2^{2n}-2} (-1)^j  
s_j\!
\left(\left\{\xi_{p,0}(X)\xi_{p,i_1\!}(X) \cdots \xi_{p,i_r\!}(X)\, \left|\, 
\begin{array}{c}
1\le r \le 2n, \\
1 \le i_1 < \cdots < i_r \le 2n, \\
(i_1,\cdots,i_n)\ne (1,\cdots,n)
\end{array}\right.
\right\}\right)} \\[2mm]
&& \hspace*{2mm}
\times b_p(p^{2^{2n}-2-j}T;\,X) \\[2mm]
&=& { 
\displaystyle(1-X)\prod_{i=1}^{n}(1- p^{2i}X^2) 
\prod_{r=1}^{2n}\, \prod_{\scriptstyle 1 \le i_1 < \cdots < i_r \le 2n, 
\atop {\scriptstyle (i_1,\cdots,i_n) \ne (1,\cdots,n)}}
(1-\xi_{p,0}(X)\xi_{p,i_1\!}(X) \cdots \xi_{p,i_r\!}(X))
\over 
(1-p^{2n}X)\displaystyle\prod_{i=1}^{n}(1- p^{2n+2i-1}X^2)
}. \nonumber
\end{eqnarray*}
This can be proved by making use of the same arguments as in \cite{Zha75} and 
\cite{Kit86} (see also \cite{B-S87}).    
Now we complete the proof. 
\end{proof}

\vspace*{2mm}
As a consequence of Theorem 4.1, we also have  

\begin{cor}Under the same assumption as above, 
we have
\[
{\rm Lift}^{(2n)}(f)^{*}|_{k+n}U_{p,0}
 = 
 \alpha_p(f)^n 
\cdot {\rm Lift}^{(2n)}(f)^{*}. 
\]
\end{cor}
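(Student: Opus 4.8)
The plan is to compare Fourier coefficients on both sides, combining the explicit formula of Theorem 4.1 with the description (14) of the action of $U_{p,0}$. By (14) we have ${\rm Lift}^{(2n)}(f)^{*}|_{k+n}U_{p,0} = \sum_{T \ge 0} A_{pT}({\rm Lift}^{(2n)}(f)^{*})\, q^{T}$, so it suffices to establish the coefficientwise identity $A_{pT}({\rm Lift}^{(2n)}(f)^{*}) = \alpha_p(f)^{n}\, A_{T}({\rm Lift}^{(2n)}(f)^{*})$ for every $0 < T \in {\rm Sym}_{2n}^{*}(\mathbb{Z})$. For $T$ not positive definite both sides vanish, since ${\rm Lift}^{(2n)}(f)^{*}$ is cuspidal and $pT > 0$ holds precisely when $T > 0$; thus only the case $T>0$ needs attention.

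First I would record how the relevant invariants transform under $T \mapsto pT$. Since $\mathfrak{D}_{pT} = (-1)^{n}\det(2pT) = p^{2n}\mathfrak{D}_{T}$ and $p^{2n}$ is a perfect square, the fields $\mathbb{Q}(\sqrt{\mathfrak{D}_{pT}})$ and $\mathbb{Q}(\sqrt{\mathfrak{D}_{T}})$ coincide; hence the fundamental parts agree, $\mathfrak{d}_{pT} = \mathfrak{d}_{T}$, and consequently $\mathfrak{f}_{pT} = p^{n}\mathfrak{f}_{T}$. In particular $v_{p}(\mathfrak{f}_{pT}) = v_{p}(\mathfrak{f}_{T}) + n$, while $v_{l}(\mathfrak{f}_{pT}) = v_{l}(\mathfrak{f}_{T})$ for every prime $l \ne p$, and the set of primes $l \ne p$ dividing $\mathfrak{f}_{pT}$ coincides with that dividing $\mathfrak{f}_{T}$.

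Next I would observe that the non-$p$ local factors are unchanged: since $p \in \mathbb{Z}_{l}^{\times}$ for $l \ne p$, the identity $F_{l}(uT;\,X)=F_{l}(T;\,X)$ for $u \in \mathbb{Z}_{l}^{\times}$ recorded at the end of \S2.1 gives $F_{l}(pT;\, l^{-k-n}\beta_{l}(f)) = F_{l}(T;\, l^{-k-n}\beta_{l}(f))$. Substituting all of this into the formula of Theorem 4.1, the prefactor $\bigl(1-\bigl(\tfrac{\mathfrak{d}_{T}}{p}\bigr)\beta_{p}(f)p^{-k}\bigr)\, c_{|\mathfrak{d}_{T}|}(h)$ and the product over $l \ne p$ take identical values at $pT$ and at $T$, so the only change is in the power of $\alpha_{p}(f)$, which passes from $\alpha_{p}(f)^{\,v_{p}(\mathfrak{f}_{T})+n(n+1)}$ to $\alpha_{p}(f)^{\,v_{p}(\mathfrak{f}_{T})+n+n(n+1)}$. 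This yields exactly $A_{pT}({\rm Lift}^{(2n)}(f)^{*}) = \alpha_{p}(f)^{n}\, A_{T}({\rm Lift}^{(2n)}(f)^{*})$, and summing over $T$ via (14) finishes the argument.

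The computation is essentially bookkeeping once Theorem 4.1 is available; the only point requiring genuine care is the conductor transformation, namely verifying $\mathfrak{f}_{pT} = p^{n}\mathfrak{f}_{T}$, equivalently $v_{p}(\mathfrak{f}_{pT}) = v_{p}(\mathfrak{f}_{T})+n$, since it is precisely this shift by $n$ that produces the eigenvalue $\alpha_{p}(f)^{n}$. I would double-check this in the borderline case $p \mid \mathfrak{d}_{T}$, confirming that multiplying the discriminant by the even power $p^{2n}$ leaves the fundamental part $\mathfrak{d}_{T}$ untouched and absorbs the entire extra factor into $\mathfrak{f}_{T}$; the invariance $F_{l}(pT;X)=F_{l}(T;X)$ for $l\ne p$ is then the second ingredient to flag, but it follows immediately from $p\in\mathbb{Z}_{l}^{\times}$.
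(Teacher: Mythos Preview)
Your proof is correct and follows the same approach as the paper, which simply asserts that the corollary follows immediately from Theorem 4.1; you have spelled out in full the Fourier-coefficient comparison via (14) and the transformation $\mathfrak{d}_{pT}=\mathfrak{d}_T$, $\mathfrak{f}_{pT}=p^n\mathfrak{f}_T$, $F_l(pT;X)=F_l(T;X)$ for $l\ne p$, which is precisely what the paper leaves implicit.
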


\vspace*{2mm}
Indeed, the desired equation follows immediately from Theorem 4.1. \hfill $\Box$

\vspace*{3mm}
We should mention that Courtieu-Panchishkin \cite{C-P04} stated a general philosophy 
of the $p$-stabilization for Siegel modular forms. In accordance with it, we may also consider another $p$-stabilization 
\begin{eqnarray*}
{\rm Lift}^{(2n)}(f)^{\dag}&:=& {\rm Lift}^{(2n)}(f)|_{k+n} (U_{p,0}-\psi_0(p))\cdot\Phi_p^{*}(U_{p,0}) \\
&=& {\rm Lift}^{(2n)}(f)|_{k+n} \Phi_p(U_{p,0})\cdot(U_{p,0}-\psi_0(p)\prod_{i=1}^n \psi_i(p))^{-1}. 
\end{eqnarray*}
Since $U_{p,0}$ annihilates the Hecke polynomial $\Phi_p(Y)$ (cf. Proposition 6.10 in \cite{A-Z95}), 
it follows from the equation (13) that 
\begin{eqnarray*}
{\rm Lift}^{(2n)}(f)^{\dag}|_{k+n}U_{p,0}
 &=& \psi_0(p) \prod_{i=1}^{n} \psi_i(p)
\cdot {\rm Lift}^{(2n)}(f)^{\dag} \\
&=&\alpha_p(f)^n 
\cdot {\rm Lift}^{(2n)}(f)^{\dag}.  
\end{eqnarray*}
We easily verify that $U_{p,0}$ annihilates $\Phi_p^*(Y)\cdot (Y-\psi_0(p)\prod_{i=1}^n \psi_{i}(p)) 
$ as well, and hence we may also prove Corollary 4.2 along the same line as above. 
Since we have 
\[
{\Psi_p^{*}(\alpha_p(f)^n) \over \Phi_p^{*}(\alpha_p(f)^n)}\cdot{\rm Lift}^{(2n)}(f)^{\dag} = (1- p^{nk-n(n+1)/2})\cdot 
{\rm Lift}^{(2n)}(f)^{*}, 
\]
${\rm Lift}^{(2n)}(f)^{*}$ and ${\rm Lift}^{(2n)}(f)^{\dag}$ are essentially the same, however, the former can be regarded as the 
principle $p$-stabilization for ${\rm Lift}^{(2n)}(f)$ and the Siegel Eisenstein series $E_{k+n}^{(2n)}$ (cf. \S\S5.1 below).  

\begin{rem}[semi-ordinarity]
Unfortunately, even if $f$ is ordinary (and so is the associated Galois representation), ${\rm Lift}^{(2n)}(f)$ does not admit the ordinary $p$-stabilization in the sense of Hida (cf. \cite{Hid02,Hid04}). 
That is, ${\rm Lift}^{(2n)}(f)$ may not be an eingenfunction of the Hecke operators $U_{p,0},\,U_{p,1}\cdots,\,U_{p,2n-1}$ 
such that all the eigenvalues are $p$-adic units. It is because the fact that the $p$-local spherical representation associated with ${\rm Lift}^{(2n)}(f)$ is not equal to any induced representation of an unramified character.  
On the other hand, Corollary 4.2 implies that 
there 
exists a $p$-stabilization 
${\rm Lift}^{(2n)}(f)^{*}$ so that the eigenvalue of $U_{p,0}$ is a $p$-adic unit. 
Following \cite{S-U06}, we refer to it as the {\it semi-ordinary} $p$-stabilization of ${\rm Lift}^{(2n)}(f)$. 
In fact, it turns out that this condition is sufficient to adapt the ordinary theory. Since the operator $U_{p,0}$ is given by the trace of the Frobenius operator acting on the ordinary part of the cohomology of the Siegel variety, there is no need to use the overconvergence of canonical subgroup. 
\end{rem}


Now, for the universal ordinary $p$-stabilized newform 
${\bf f}_{\rm ord}=\sum_{m \ge 1}{\bf a}_m q^m \in R^{\rm ord}[[q]]$ interpolating the Hida family $\{f_{2k}^{*}\}$, 
we construct its $\Lambda$-adic lifting interpolating 
the semi-ordinary $p$-stabilized Duke-Imamo{\={g}}lu lifting of $f_{2k}$:

\begin{thm} 
For each integer $n \ge 1$, and a fixed integer ${k_0 > n+1}$ with $k_0 \equiv n \pmod{2}$, let
$\mathfrak{d}_0$ and $\mathfrak{U}_0$ be a pair of a fundamental discriminant and an analytic neighborhood of 
$P_0 \in \mathfrak{X}_{\rm alg}(R^{\rm ord})$ taken as in Proposition 3.7. For each integer $k \ge k_0$ with 
$k \equiv k_0 \pmod{2}$, we denote by ${\rm Lift}_{\mathfrak{d}_0}^{(2n)}(f_{2k})$ the Duke-Imamo{\={g}}lu lifting of $f_{2k}$ associated with the ${\mathfrak{d}_0}$-th Shintani lifting 
$\vartheta_{\mathfrak{d}_0}(f_{2k})$. 
Then for $\widetilde{P}_0=(k_0, \omega^{k_0})\in \mathfrak{X}_{\rm alg}(\widetilde{R}^{\rm ord})$ lying over $P_0$, there exist a formal Fourier expansion $${\bf F}=\sum_{T >0} {\bf a}_T\, q^T \in (\widetilde{R}^{\rm ord})_{(\widetilde{P}_0)}[[q]]^{(2n)}$$ and a choice of $p$-adic period $\Omega_{P} \in \overline{\mathbb{Q}}_p$ for $P \in \mathfrak{U}_0
$ satisfying the following: 
\begin{itemize}
\item[(i)] $\Omega_{P_0} \ne 0$.   
 
\item[(ii)] For each $\widetilde{P}=(k, \omega^{k}) \in \mathfrak{X}_{\rm alg}(\widetilde{R}^{\rm ord})$ lying over $P=(2k, \omega^{2k}) \in \mathfrak{U}_0$ with 
$k \equiv k_0 \pmod{2}$, we have
\[
{\bf F}(\widetilde{P})={\Omega_{P} \over \Omega^{\epsilon}(P)}\,
{\rm Lift}_{\mathfrak{d}_0}^{(2n)}(f_{2k})^{*},
\]
where $\Omega^{\epsilon}(P) \in \mathbb{C}^{\times}$ is the complex period of $P$ with signature $\epsilon \in \{\pm \}$. 
\end{itemize}
\end{thm}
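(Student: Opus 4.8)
The plan is to build ${\bf F}$ coefficient by coefficient, using the explicit Fourier expansion of Theorem 4.1 together with the $\Lambda$-adic Shintani lifting already constructed in Proposition 3.7. Fix $0 < T \in {\rm Sym}_{2n}^{*}(\mathbb{Z})$ with $\mathfrak{D}_T = \mathfrak{d}_T\mathfrak{f}_T^2$, and take $h = \vartheta_{\mathfrak{d}_0}(f_{2k})$. Theorem 4.1 exhibits $A_T({\rm Lift}^{(2n)}_{\mathfrak{d}_0}(f_{2k})^{*})$ as a product of three kinds of factors: the Euler-corrected Shintani coefficient $(1 - (\mathfrak{d}_T/p)\beta_p(f_{2k})p^{-k})\,c_{|\mathfrak{d}_T|}(h)$, the $p$-power $\alpha_p(f_{2k})^{v_p(\mathfrak{f}_T)+n(n+1)}$, and the prime-to-$p$ local factors $\prod_{l\mid\mathfrak{f}_T,\, l\ne p}\alpha_l(f_{2k})^{v_l(\mathfrak{f}_T)}F_l(T;\beta_l(f_{2k})l^{-k-n})$. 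I would interpolate each factor $\Lambda$-adically over $\widetilde{R}^{\rm ord}$ and define ${\bf a}_T$ as their product.

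\textbf{The three factors.} The first factor, \emph{including} the period ratio $\Omega_P/\Omega^{\epsilon}(P)$, is exactly what Proposition 3.7 produces: the element ${\bf c}(\mathfrak{d}_0;|\mathfrak{d}_T|) \in (\widetilde{R}^{\rm ord})_{(\widetilde{P}_0)}$ satisfies ${\bf c}(\mathfrak{d}_0;|\mathfrak{d}_T|)(\widetilde{P}) = \tfrac{\Omega_P}{\Omega^{\epsilon}(P)}(1 - (\mathfrak{d}_T/p)\beta_p(f_{2k})p^{-k})\,c_{|\mathfrak{d}_T|}(h)$, so it already carries both the complex period and the fundamental-discriminant Euler correction. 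The second factor is immediate: since $\alpha_p(f_{2k})$ is the $U_p$-eigenvalue of $f_{2k}^{*} = {\bf f}_{\rm ord}(P)$, it equals ${\bf a}_p(P)$, whence the fixed integral power $\alpha_p(f_{2k})^{v_p(\mathfrak{f}_T)+n(n+1)}$ is interpolated by $({\bf a}_p\otimes 1)^{v_p(\mathfrak{f}_T)+n(n+1)} \in \widetilde{R}^{\rm ord}$. For the third factor I would invoke Kohnen's refinement (9): for each $l\ne p$ the quantity $\alpha_l(f_{2k})^{v_l(\mathfrak{f}_T)}F_l(T;\beta_l(f_{2k})l^{-k-n})$ is a $\mathbb{Z}$-linear combination, with the weight-independent coefficients $\phi_T$, of terms $(l^{k-1})^{v_l(\mathfrak{f}_T)-i}\,a_{l^i}(f_{2k})$, and $a_{l^i}(f_{2k}) = a_{l^i}(f_{2k}^{*}) = {\bf a}_{l^i}(P)$ is interpolated by the coefficient ${\bf a}_{l^i}$ of ${\bf f}_{\rm ord}$ pulled back to $\widetilde{R}^{\rm ord}$.

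\textbf{Assembly.} Interpolating the remaining half-integral powers $l^{k-1}$ by a group-like element built from $[l]\in\Lambda_1$ and the metaplectic twist $\sigma$, I would realize each prime-to-$p$ factor as an element of $\widetilde{R}^{\rm ord}$; the product over the finitely many $l\mid\mathfrak{f}_T$ with $l\ne p$ is then again a single element of $\widetilde{R}^{\rm ord}$. Setting ${\bf a}_T$ equal to the product of the three interpolated factors gives ${\bf a}_T \in (\widetilde{R}^{\rm ord})_{(\widetilde{P}_0)}$ with ${\bf a}_T(\widetilde{P}) = \tfrac{\Omega_P}{\Omega^{\epsilon}(P)}A_T({\rm Lift}^{(2n)}_{\mathfrak{d}_0}(f_{2k})^{*})$, and I would define ${\bf F} = \sum_{T>0}{\bf a}_T\,q^T$. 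Summing over $T$ and comparing with Theorem 4.1 yields (ii), the period ratio appearing only once because the $p$-power and prime-to-$p$ factors interpolate without any period; property (i), $\Omega_{P_0}\ne 0$, is inherited directly from Proposition 3.7 (equivalently Theorem IV), after shrinking $\mathfrak{U}_0$ if necessary so that all quotients entering ${\bf c}(\mathfrak{d}_0;|\mathfrak{d}_T|)$ remain analytic and nonvanishing.

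\textbf{Main obstacle.} The delicate point is the $\Lambda$-adic interpolation of the half-integral powers $l^{k-1}$: relative to the weight $2k$ these are ``square roots'' of the Hecke-normalizing factor $l^{2k-1} = \alpha_l(f_{2k})\beta_l(f_{2k})$, and they are not visible on $R^{\rm ord}$ itself. This is precisely the reason for passing to the metaplectic double cover $\widetilde{R}^{\rm ord} = R^{\rm ord}\otimes_{\Lambda_1,\sigma}\Lambda_1$ and to the points $\widetilde{P} = (k,\omega^{k})$ lying over $P = (2k,\omega^{2k})$, whose ``weight $k$'' is exactly half of that of $P$. Making this rigorous requires careful tracking of the Teichm\"uller components of the arithmetic characters, so that each product $(l^{k-1})^{v_l(\mathfrak{f}_T)-i}\,{\bf a}_{l^i}$ indeed glues to a single analytic function on a common neighborhood $\mathfrak{U}_0$. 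Once this is secured the remaining verifications are formal, since the clean $p$-part, namely only a power of $\alpha_p$ with no residual local Siegel series $F_p$, is already supplied by the combinatorial identity established in the proof of Theorem 4.1, and is exactly what renders the semi-ordinary $p$-stabilization interpolable.
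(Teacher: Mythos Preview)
Your proposal is correct and follows essentially the same route as the paper: decompose $A_T({\rm Lift}^{(2n)}_{\mathfrak{d}_0}(f_{2k})^{*})$ via Theorem 4.1, rewrite the prime-to-$p$ local factors using Kohnen's identity (9), then interpolate the three pieces by ${\bf c}(\mathfrak{d}_0;|\mathfrak{d}_T|)$ from Proposition 3.7, a power of ${\bf a}_p$, and the elements ${\bf a}_{l^i}$ together with a $\Lambda$-adic avatar ${\bf d}(l^r)$ of $l^{r(k-1)}$. The only difference is emphasis: the paper disposes of the interpolation of $l^{k-1}$ in one line (``we easily see that\ldots there exists ${\bf d}(l^r)\in\Lambda$''), whereas you flag it as the main obstacle and correctly locate it in the second $\Lambda_1$-factor of the metaplectic cover $\widetilde{R}^{\rm ord}$, which is indeed where the weight-$k$ (rather than weight-$2k$) character lives.
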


\begin{proof}
By combining Theorem 4.1 with the equation (9), we have 
\begin{eqnarray*}
\lefteqn{
A_{T}({\rm Lift}_{\mathfrak{d}_0}^{(2n)}(f_{2k})^{*})
} \\
&=& 
\left( 
1 - \left(
{\mathfrak{d}_T \over p}
\right)
\beta_p(f_{2k}) p^{-k} \right) 
c_{|\mathfrak{d}_T|}(\vartheta_{\mathfrak{d}_0}(f_{2k}))   \\
&& 
\times\, \alpha_p(f_{2k})^{v_p(\mathfrak{f}_T)+n(n+1)} \prod_{\scriptstyle l | \mathfrak{f}_{T}, \atop {\scriptstyle l \ne p}} 
\alpha_l(f_{2k})^{v_l(\mathfrak{f}_T)} F_l(T;\,\beta_l(f_{2k}) l^{-k-n}) \\
&=& 
\left( 
1 - \left(
{\mathfrak{d}_T \over p}
\right)
\beta_p(f_{2k}) p^{-k} \right) 
c_{|\mathfrak{d}_T|}(\vartheta_{\mathfrak{d}_0}(f_{2k}))   \\
&& 
\times\, \alpha_p(f_{2k})^{n(n+1)} \prod_{\scriptstyle l | \mathfrak{f}_{T}, \atop {\scriptstyle l \ne p}} 
\sum_{i=0}^{v_l(\mathfrak{f}_T)} \phi_T(l^{v_l(\mathfrak{f}_T)-i}) 
(l^{v_l(\mathfrak{f}_T)-i})^{k-1} a_{l^{i}}(f).  
\end{eqnarray*}
We easily see that for each prime $l \ne p$ and each integer $r\ge 0$, there exists an element ${\bf d}({l^r}) \in \Lambda
$ such that 
${\bf d}(l^r)(P) = l^{r(k-1)}$ for each $P=(2k ,\, \omega^{2k}) \in \mathfrak{X}_{\rm alg}(R^{\rm ord})$.  
Then we define an element ${\bf a}_{T}\in (\widetilde{R}^{\rm ord})_{(\widetilde{P}_0)}$ by 
\begin{equation}
{\bf a}_{T}:={\bf c}(\mathfrak{d}_0;\,|\mathfrak{d}|)\, {\bf a}_p^{n(n+1)} \prod_{\scriptstyle l | \mathfrak{f}_{T}, \atop {\scriptstyle l \ne p}} 
\sum_{i=0}^{v_l(\mathfrak{f}_T)} \phi_T(l^{v_l(\mathfrak{f}_T)-i}) 
{\bf d}(l^{v_l(\mathfrak{f}_T)-i})\, {\bf a}_{l^{i}},  
\end{equation}
where ${\bf c}(\mathfrak{d}_0;\,|\mathfrak{d}|) \in (\widetilde{R}^{\rm ord})_{(\widetilde{P}_0)}$ is the element in Proposition 3.7. Hence the desired interpolation property follows immediately from Theorem II and Proposition 3.7, and we complete the proof. 
\end{proof}

According to the explicit form (16), the $\Lambda$-adic Duke-Imamo{\={g}}lu lifting admits a non-vanishing property as well as 
the $\Lambda$-adic Shintani lifting that we have already established in \S\S 3.2. Therefore we consequently obtain 
a semi-ordinary $p$-adic analytic family of non-zero cuspidal Hecke eigenforms 
$
\{{\Omega_{P} \over \Omega^{\epsilon}(P)}\,
{\rm Lift}_{\mathfrak{d}_0}^{(2n)}(f_{2k})^{*} \in \mathscr{S}_{k+n}(\Gamma_0(p))^{(2n)}\}
$
parametrized by varying $P=(2k,\,\omega^{2k}) \in \mathfrak{U}_0$ with 
$k \equiv k_0 \equiv n \pmod{2}$. 

\vspace*{2mm}
\begin{rem}
Recently, Ikeda \cite{Ike10} generalized the classical Duke-Imamo{\={g}}lu lifting to 
a Langlands functorial lifting of cuspidal automorphic representations of ${\rm PGL}_2(\mathbb{A}_K)$ to ${\rm Sp}_{4n}(\mathbb{A}_K)$ over a totally real algebraic field extension $K$ of $\mathbb{Q}$, which contains some 
generalizations of the classical Duke-Imamo{\={g}}lu lifting for $\mathscr{S}_{2k}(\Gamma_0(N))^{(1)}$ with $N \ge 1$. 
Unfortunately, the case $N=p$ has been excluded from the framework so far.  
It is because 
resulting automorphic representations of ${\rm Sp}_{4n}(\mathbb{A}_K)$ are likely to have 
the Steinberg representations as their $p$-local components. 
Therefore we cannot use the 
Duke-Imamo{\={g}}lu lifting of 
$f_{2k}^{*} \in \mathscr{S}_{2k}(\Gamma_0(p))^{(1)}$ directly. However, by virtue of Theorem 4.1 and Corollary 4.2, 
we may regard the semi-ordinary 
$p$-stabilized Duke-Imamo{\={g}}lu lifting 
${\rm Lift}_{\mathfrak{d}_0}^{(2n)}(f_{2k})^{*}\in \mathscr{S}_{k+n}(\Gamma_0(p))^{(2n)}$ naturally as 
a possible generalized Duke-Imamo{\={g}}lu lifting of $
f_{2k}^{*} \in \mathscr{S}_{2k}^{\rm old}(\Gamma_0(p))^{(1)}$. The author expects that 
the theory of the $p$-adic stabilization for Siegel modular forms is not only interesting in its own right 
but also useful in the study of classical Siegel modular forms and the associated automorphic representations.  
\end{rem}

\section{
Appendix
}

\subsection{$\Lambda$-adic Siegel Eisenstein series of even genus}

As applications of the methods we have used in the previous \S 4, we give a similar result for the Siegel Eisenstein series of even genus. 

\vspace*{2mm}Recall that for the 
Eisenstein series $E_{2k}^{(1)}  \in \mathscr{M}_{2k}({\rm SL}_2(\mathbb{Z}))$, the ordinary $p$-stabilization  
\[
(E_{2k}^{(1)})^{*}(z):=E_{2k}^{(1)}(z)-p^{2k-1}E_{2k}^{(1)}(pz) \in \mathscr{M}_{2k}(\Gamma_0(p))^{(1)} \hspace*{3mm} (z \in \mathfrak{H}_1)
\]
can be assembled into a $p$-adic analytic family (cf. \cite{Hid93}). On the other hand, 
we have mentioned in \S\S 2.1 that for each pair of positive integers $n,\,k$ with $k > n+1$ and $k \equiv n \pmod{2}$, the Siegel Eisenstein series $E_{k+n}^{(2n)}\in \mathscr{M}_{k+n}({\rm Sp}_{4n}(\mathbb{Z}))$ can be formally regarded as 
the Duke-Imamo{\={g}}lu lifting of $E_{2k}^{(1)}$. 
Therefore by replacing the Hida family $\{f_{2k}^{*}\}$ with $\{(E_{2k}^{(1)})^{*}\}$ in the argument in \S 4, we may naturally deduce the following: 

\begin{thm} 
For each positive integers $n$ and $k$ with $k>n+1$ and $k \equiv n \pmod{2}$, put
\[
(E_{k+n}^{(2n)})^{*}:=
{\Psi_p^{*}(1) \over \Phi_p^{*}(1)}\cdot E_{k+n}^{(2n)}|_{k+n} \Phi_p^{*}(U_{p,0}), 
\]
where $\Phi_p^{*}(Y)$, $\Psi_p^{*}(Y) \in \overline{\mathbb{Q}}_p^{\times}[Y]$ denote the polynomials defined in \S 4, but for $(\alpha_p(E_{2k}^{(1)}),\beta_p(E_{2k}^{(1)}))=(1,p^{2k-1})$. 
Then we have
\begin{enumerate}[\upshape (i)]
\item 
$(E_{k+n}^{(2n)})^{*}$ is a non-cuspidal Hecke eigenform in $\mathscr{M}_{k+n}(\Gamma_0(p))^{(2n)}$ such that the Hecke eigenvalues agree with $E_{k+n}^{(2n)}$ for each $l \ne p$ and 
\[
(E_{k+n}^{(2n)})^{*}|_{k+n}U_{p,0}=(E_{k+n}^{(2n)})^{*}. 
\]
\item 
Let $\mathcal{L}={\rm Frac}(\Lambda)$ be the field of fractions of $\Lambda$. 
There exists a formal Fourier expansion  
\[{\bf E}=\sum_{T \ge 0} 
{\bf e}
_T
\, q^T 
\in \mathcal{L}[[q]]^{(2n)}
\] 
such that for each $P_{2k} \in \mathfrak{X}_{\rm alg}(\Lambda)$ with $k > n+1$ and $k \equiv n \pmod{2}$, we have 
\[
{\bf E}(P_{2k})=(E_{k+n}^{(2n)})^{*}. 
\]  
\end{enumerate}
\end{thm}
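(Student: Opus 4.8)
The plan is to run the argument of \S4 verbatim, with the Hida family $\{f_{2k}^{*}\}$ replaced by Serre's $p$-adic family of $p$-stabilized Eisenstein series $\{(E_{2k}^{(1)})^{*}\}$, throughout regarding $E_{k+n}^{(2n)}$ as the Duke-Imamo{\={g}}lu lifting of $E_{2k}^{(1)}$, so that at every prime $l$ its Satake parameter is given by $(7)$ with $(\alpha_l,\beta_l)=(1,l^{2k-1})$. With this convention $\Phi_p^{*}$ and $\Psi_p^{*}$ are the specializations at $\alpha_p=1$, $\beta_p=p^{2k-1}$ of the polynomials of \S4, and $(E_{k+n}^{(2n)})^{*}$ is exactly $(15)$ with $\alpha_p(f)=1$. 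Part (i) then follows from \S4 with no new work: since $U_{p,0}$ commutes with the Hecke operators away from $p$ and carries $\mathscr{M}_{k+n}({\rm Sp}_{4n}(\mathbb{Z}))$ into $\mathscr{M}_{k+n}(\Gamma_0(p))^{(2n)}$, the form $(E_{k+n}^{(2n)})^{*}$ is a non-cuspidal Hecke eigenform of level $p$ with the same eigenvalues as $E_{k+n}^{(2n)}$ at each $l\ne p$, and the relation $(E_{k+n}^{(2n)})^{*}|_{k+n}U_{p,0}=(E_{k+n}^{(2n)})^{*}$ is Corollary 4.2 evaluated at $\alpha_p(f)^{n}=1$, its proof resting only on the annihilation of $\Phi_p(Y)$ by $U_{p,0}$ and so surviving the substitution.

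For (ii) I would first read the Fourier coefficients off Theorem 4.1. Substituting $(\alpha_p,\beta_p)=(1,p^{2k-1})$, $(\alpha_l,\beta_l)=(1,l^{2k-1})$ and $c_{|\mathfrak{d}_T|}(h)=c_{|\mathfrak{d}_T|}(H_{k+1/2})=L(1-k,(\tfrac{\mathfrak{d}_T}{*}))$ yields, for $0<T$ with $\mathfrak{D}_T=\mathfrak{d}_T\mathfrak{f}_T^{2}$,
\[
A_T\bigl((E_{k+n}^{(2n)})^{*}\bigr)=\Bigl(1-\bigl(\tfrac{\mathfrak{d}_T}{p}\bigr)p^{k-1}\Bigr)\,L\Bigl(1-k,\bigl(\tfrac{\mathfrak{d}_T}{*}\bigr)\Bigr)\prod_{\substack{l\mid\mathfrak{f}_T\\ l\ne p}}F_l(T;l^{k-n-1}).
\]
The factors $F_l(T;l^{k-n-1})$ with $l\ne p$ present no difficulty: since $F_l(T;X)\in\mathbb{Z}[X]$, each is a $\mathbb{Z}$-linear combination of terms $l^{r(k-1)}$ (up to fixed powers of $l$), hence the specialization of a single element of $\Lambda$ assembled from the $\mathbf{d}(l^{r})$ of Theorem 4.4 with $\mathbf{d}(l^{r})(P_{2k})=l^{r(k-1)}$.

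The crux, and the step I expect to be the main obstacle, is the simultaneous interpolation of the factor $(1-(\tfrac{\mathfrak{d}_T}{p})p^{k-1})L(1-k,(\tfrac{\mathfrak{d}_T}{*}))$ as $P_{2k}$ varies. This is precisely the defining interpolation property of the Kubota-Leopoldt $p$-adic $L$-function of the Kronecker character $\psi=(\tfrac{\mathfrak{d}_T}{*})$ --- equivalently of Serre's $p$-adic Eisenstein family, which here plays the role that the $\Lambda$-adic Shintani lifting played in \S4 --- the Euler factor $1-\psi(p)p^{k-1}$ being exactly its built-in modification factor. Realizing this value as an element of $\mathcal{L}={\rm Frac}(\Lambda)$ through $\Lambda\simeq\mathbb{Z}_p[[X]]$ demands matching two normalizations with care: the factor of two between the elliptic weight $2k$ and the shift $1-k$, which I would absorb through the squaring map $\gamma\mapsto\gamma^{2}$ of \S3.2, and the Teichm\"uller branch $\omega^{k}$ --- the genuinely delicate point, since the naive values $L(1-k,\psi)$ become $p$-adically coherent only once the branch $\omega^{-k}$ is incorporated, so the Iwasawa element must be taken on the branch of Serre's weight space $\mathbb{Z}_p\times\mathbb{Z}/(p-1)\mathbb{Z}$ singled out by $k\equiv n\pmod 2$. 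Moreover, the simple pole of the $p$-adic zeta function in the split case $\mathfrak{d}_T=1$ (where $\psi=\mathbf{1}$) is exactly what forces the coefficients out of $\Lambda$ and into ${\rm Frac}(\Lambda)$, accounting for the appearance of $\mathcal{L}$ in the statement.

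Granting this, I would set $\mathbf{e}_T$, for $T>0$, equal to the product of the Iwasawa element above with the $\Lambda$-element of the second paragraph; by construction $\mathbf{e}_T\in\mathcal{L}$ and $\mathbf{e}_T(P_{2k})=A_T((E_{k+n}^{(2n)})^{*})$. The degenerate coefficients with ${\rm rank}(T)<2n$ (in particular $T=0$) interpolate by the same mechanism applied, through the Siegel $\Phi$-operator, to the Siegel Eisenstein series of lower genus, or may simply be imported from the known $p$-adic families of Siegel Eisenstein series (Panchishkin, Takemori). Collecting the $\mathbf{e}_T$ then yields $\mathbf{E}=\sum_{T\ge0}\mathbf{e}_T\,q^T\in\mathcal{L}[[q]]^{(2n)}$ with $\mathbf{E}(P_{2k})=(E_{k+n}^{(2n)})^{*}$ for every $P_{2k}$ with $k>n+1$ and $k\equiv n\pmod 2$, completing (ii).
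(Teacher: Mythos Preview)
Your proposal is correct and follows essentially the same route as the paper: part (i) via the properties of $U_{p,0}$ and Corollary 4.2 specialized at $\alpha_p=1$, and part (ii) for $T>0$ by specializing Theorem 4.1 to the Eisenstein Satake parameters and invoking the Kubota--Leopoldt $p$-adic $L$-function for the factor $(1-(\tfrac{\mathfrak{d}_T}{p})p^{k-1})L(1-k,(\tfrac{\mathfrak{d}_T}{*}))$, together with the elementary $\Lambda$-interpolation of $F_l(T;l^{k-n-1})$ for $l\ne p$. Your remark that the pole of the $p$-adic zeta function (the case $\mathfrak{d}_T=1$) is what forces coefficients into $\mathcal{L}=\mathrm{Frac}(\Lambda)$ rather than $\Lambda$ is a nice gloss the paper does not make explicit.

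The one place you diverge is the degenerate coefficients with $\mathrm{rank}(T)<2n$. The paper does not invoke the $\Phi$-operator or import from Panchishkin/Takemori; instead it computes the constant term directly (observing that $U_{p,0}$ fixes it, so $A_0((E_{k+n}^{(2n)})^{*})=\Psi_p^{*}(1)\cdot A_0(E_{k+n}^{(2n)})$, which unwinds to $2^{-n}\zeta^{(p)}(1-k-n)\prod_{i=1}^{n}\zeta^{(p)}(1-2k-2n+2i)$), and for intermediate rank cites Kitaoka's explicit formula for $A_T(E_{k+n}^{(2n)})$ in terms of $\zeta$-values and the lower-genus polynomials $F_l^{(r)}(T_1;X)$, then asserts that the same stabilization computation as in Theorem 4.1 turns each $\zeta$ into $\zeta^{(p)}$ and kills the $p$-local factor. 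Your appeal to Panchishkin is slightly off, since (as the paper's own Remark 5.2 notes) his result covers only $T$ with $p\nmid\det T$; and your $\Phi$-operator reduction is not quite immediate, because the polynomial $\Phi_p^{*}$ applied is the genus-$2n$ one, not the genus-$r$ one, so one still needs an identity of Siegel series in the degenerate case. The paper's route through Kitaoka's formula is more self-contained, though it too is only sketched.
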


\begin{proof}By making use of the fundamental properties of $U_{p,0}$ 
stated in \S 4, we easily obtain the assertion (i). 
For each $T \in {\rm Sym}_{2n}^{*}(\mathbb{Z})$ with $T>0
$, by 
applying Theorem 4.1 for $E_{2k}^{(1)}$ instead of $f_{2k}$, we have
\begin{eqnarray*}
A_{T}((E_{k+n}^{(2n)})^{*})
&=& 
\left( 
1 - \left(
{\mathfrak{d}_T \over p}
\right)
p^{k-1}
 \right) 
L(1-k,\left(
{\mathfrak{d}_T \over *}
\right))
 \prod_{\scriptstyle \,\,l | \mathfrak{f}_{T}, \atop {\scriptstyle l \ne p}} 
F_l(T;\, 
l^{k-n-1}
) \\
&=& 
L^{(p)}(1-k,\left(
{\mathfrak{d}_T \over *}
\right))
 \prod_{\scriptstyle \,\,l | \mathfrak{f}_{T}, \atop {\scriptstyle l \ne p}} 
F_l(T;\, l^{k-n-1}
),
\end{eqnarray*}
where $L^{(p)}(s,\left(
{\mathfrak{d}_T \over *}
\right)):=L(s,\left(
{\mathfrak{d}_T \over *}
\right))(1- \left(
{\mathfrak{d}_T \over p}
\right) p^{-s})$.
If $T \in {\rm Sym}_{2n}^{*}(\mathbb{Z})$ has ${\rm rank}(T)=0$, then it follows from the definition of $E_{k+n}^{(2n)}$ that 
\begin{eqnarray*}
A_T((E_{k+n}^{(2n)})^{*}) 
&=& {\Psi_p^{*}(1) \over \Phi_p^{*}(1)} \cdot \Phi_p^{*}(1) \cdot A_T(E_{k+n}^{(2n)}) \\
&=& 2^{-n} \zeta^{(p)}(1-k-n) \prod_{i=1}^{n} \zeta^{(p)}(1-2k-2n+2i), 
\end{eqnarray*}
where $\zeta^{(p)}(s):=\zeta(s) (1-p^{-s})$. 
Moreover, for each integer $0<r<2n$, if $T=
\left(\begin{array}{cc}
0_{2n-r} & \\
 & T_1
\end{array}\right)
$ with some $0< T_1 \in {\rm Sym}_{r}^{*}(\mathbb{Z})$, 
then by virtue of Theorem 1 in \cite{Kit84} (see also Theorem 4.4 in \cite{Kat99}), the Fourier coefficient $A_T(E_{k+n}^{(2n)})$ can be written explicitly in terms of 
the product of special values of $\zeta(s)$ and the product of $F_l(T_1;\,
l^{k+n-r-1}
)=F_l^{(r)}(T_1;\,
l^{k+n-r-1}
)$ taken over all prime $l$. 
Hence we may derive a similar explicit form of $A_T((E_{k+n}^{(2n)})^{*})$ 
in terms of $\zeta^{(p)}(s)$ and $F_l(T_1;\,
l^{k+n-r-1}
)$ with $l\ne p$. Therefore, the assertion (ii) follows immediately from the constructions of one-variable $p$-adic $L$-functions in the sense of Kubota-Leopoldt (cf. \cite{Wil88,Hid93}). 
\end{proof}


\begin{rem} More generally in the same context, Panchishkin \cite{Pan00} has already obtained a similar result for the Siegel Eisenstein series of arbitrary genus $g \ge 1$ that is twisted by a cyclotomic character. However, in that case, we may conduce the $p$-adic interpolation properties of the Fourier coefficients only for $0<T \in {\rm Sym}_{g}^{*}(\mathbb{Z})$ with $p \!\not\!|\, \det(T)$. 
Fortunately, in our setting, the description of the semi-ordinary $p$-stabilization allows us to resolve the $p$-adic interpolation problem for the whole Fourier expansion. 
\end{rem}

When $n=1$, Takemori \cite{Tak10} independently proved that 
for each integer $r \ge 0$ and each integer $N \ge 1$ prime to $p$, 
a similar result also holds for the Siegel Eisenstein series of genus $2$ and of level $Np^r$ with a primitive Nebentypus character. In the present article, we mainly dealt with the 
Duke-Imamo{\={g}}lu lifting according to \cite{Ike01}, which requires the conditions $r=0$ and $N=1$. Therefore the Siegel Eisenstein series has been discussed under the same conditions, but for arbitrary even genus $2n \ge2$. We note that the method we use is also extendable to the Siegel Eisenstein series of even genus in a more general setting, at least, for $N>1$. 


\subsection{Numerical evidences}
Finally in this section, we present some numerical evidences of the $p$-adic analytic family $\{{\rm Lift}^{(2n)}(f_{2k})^{*}\}$ which have been produced by using the Wolfram {\it Mathematica 7}. 

\begin{ex}
As mentioned in Example 3.4, 
Ramanujan's $\Delta$-function $f_{12} \in \mathscr{S}_{12}({\rm SL}_2(\mathbb{Z}))$ and 
a normalized Hecke eigenform $f_{32} \in \mathscr{S}_{32}({\rm SL}_2(\mathbb{Z}))$ can be assembled into the same Hida family for $p=11$. 
We easily see that 
\begin{eqnarray*}
h_{13/2}&=&q - 56 q^4 + 120 q^5 - 240 q^8 +9 q^9 + 1440 q^{12}+\cdots \in \mathbb{Z}[[q]],\\
h_{33/2}&=&q + (x-32768) q^4 + (2x-65568) q^5 + (218x-7116672) q^8  \\
&& +(432x-14298687) q^9 -(2916x-103037184) q^{12}+\cdots \in \mathcal{O}_{K_{32}}[[q]]
\end{eqnarray*}
give rise to Hecke eigenforms in 
$\mathscr{S}_{13/2}(\Gamma_0(4))^{(1)}$ and 
$\mathscr{S}_{33/2}(\Gamma_0(4))^{(1)}$ 
corresponding respectively to $f_{12}$ and $f_{32}$ 
via the Shimura correspondence, 
where $\mathcal{O}_{K_{32}}$ denotes the ring of integers in the real quadratic field 
$K_{32}$. 
By making use of the induction formulas of $F_l(T;\,X)$ (cf. \cite{Kat99}), 
we computed Fourier coefficients of ${\rm Lift}^{(4)}(f_{12}) \in \mathscr{S}_8({\rm Sp}_8(\mathbb{Z}))$ and ${\rm Lift}^{(4)}(f_{32}) \in \mathscr{S}_{18}({\rm Sp}_8(\mathbb{Z}))$ for 
$4475$ half-integral symmetric matrices in 
$
{\rm Sym}_4^{*}(\mathbb{Z})$  
according to Nipp's table of equivalent classes of quaternary quadratic forms with discriminant up to $457$ (cf. \cite{Nipp}). 
For simplicity, 
we denote by $$[t_{11}, t_{22}, t_{33}, t_{44}, t_{12}, t_{13}, t_{23}, t_{14}, t_{24}, t_{34}]$$ 
the 
matrix 
\[
\left(
\begin{array}{cccc}
  t_{11} & {t_{12}}/2 & {t_{13}}/2 & {t_{14}}/2 \\
  {t_{12}}/2 & t_{22} & {t_{23}}/2 & {t_{24}}/2 \\
  {t_{13}}/2 & {t_{23}}/2 & t_{33} & {t_{34}}/2 \\
  {t_{14}}/2 & {t_{24}}/2 & {t_{34}}/2 & t_{44} \\
\end{array}
\right) \in {\rm Sym}_4^{*}(\mathbb{Z}). 
\]
For each $0<T\in 
{\rm Sym}_4^{*}(\mathbb{Z})$ with $\mathfrak{D}_T=\det(2T)=121$, 
$\mathfrak{d}_T=1$ and $\mathfrak{f}_T=11$, 
is equivalent to one of the following three representatives: 
\[
T_1=[1,1,3,3,0,1,0,0,1,0], \hspace*{5mm} 
T_2=[1,1,4,4,1,1,0,1,1,4], 
\]
\[
T_3=[2,2,2,2,2,1,0,1,1,2].  
\]
By virtue of Theorem 4.1 in \cite{Kat99}, 
we obtain 
\[
F_{11}(T_i;X)=1-1452 X+161051 X^2
\]
for $i=1,\,2,\,3$. Then we checked that 
the norm of the difference 
\begin{center}
$A_{T_i}({\rm Lift}^{(4)}(f_{12})) - A_{T_i}({\rm Lift}^{(4)}(f_{32}))$ \hspace*{5mm}$(i=1,\,2,\,3)$ 
\end{center}
is factored 
into 
\[
2^8 \cdot 3^4 \cdot 5^5 \cdot 11 \cdot 171449 \cdot 680531 \cdot 35058959130397. 
\]
Moreover, for each $0<T\in 
{\rm Sym}_4^{*}(\mathbb{Z})$ with $4 \le \mathfrak{D}_T \le 457$ appearing in \cite{Nipp}, 
the norm of $A_{T}({\rm Lift}^{(4)}(f_{12})) - A_{T}({\rm Lift}^{(4)}(f_{32}))$ is indeed divisible by $p=11$. 
This fact supports that ${\rm Lift}^{(4)}(f_{12})$ is congruent to ${\rm Lift}^{(4)}(f_{32})$ modulo a prime ideal of $\mathcal{O}_{K_{32}}$ 
lying over $p=11$, and hence their semi-ordinary $11$-stabilizations 
${\rm Lift}^{(4)}(f_{12})^{*}$ and ${\rm Lift}^{(4)}(f_{32})^{*}$ 
reside both in the same $11$-adic analytic family. Similarly, we also produced another examples of 
congruences occurring between Fourier coefficients of ${\rm Lift}^{(4)}(f_{20})\in \mathscr{S}_{12}({\rm Sp}_8(\mathbb{Z}))$ and ${\rm Lift}^{(4)}(f_{56})\in \mathscr{S}_{30}({\rm Sp}_8(\mathbb{Z}))$
for $p=19$. 
\end{ex}

%



\begin{thebibliography}{9999999}
\bibitem[An]
{And79}A. Andrianov, 
\textit{Modular descent and the Saito-Kurokawa conjecture}, 
Invent. Math. {\bf 53} (1979), 267--280.
\bibitem[AnZ]
{A-Z95}A. Andrianov and V. Zhuravl{\"e}v, 
\textit{Modular forms and Hecke operators}, 
Transl. of Math. Monogr., 145. AMS, Providence, RI, 1995.
\bibitem[BK]
{B-K90}S. Bloch and K. Kato, 
\textit{$L$-functions and Tamagama numbers of motives}, 
\textit{The Grothendieck Festschrift, Vol. I}, 333--400, 
Progr. Math., vol. 86, Birkh{\"a}user Boston Inc., Boston, MA,
1990. 
\bibitem[B{\"o}]
{Boe05}S. B{\"o}cherer, 
\textit{On the Hecke operator U(p). With an appendix by Ralf Schmidt}, 
J. Math. Kyoto Univ. {\bf 45} (2005), no.4, 807--829. 
\bibitem[B{\"o}S]
{B-S87}S. B{\"o}cherer and F. Sato, 
\textit{Rationality of certain formal power series related to local densities}, 
Comment. Math. Univ. St.Paul. {\bf 36} (1987), No.1, 53--86.
\bibitem[Br]
{Bro07}J. Brown, 
\textit{Saito-Kurokawa lifts and applications to the Bloch-Kato conjecture}, 
Compos. Math. {\bf 143} (2007), no. 2, 290--322.
\bibitem[BFH]
{BFH90}D. Bump, S. Friedberg and J. Hoffstein, 
\textit{Nonvanishing theorems for $L$-functions of modular forms and their derivatives}, 
Invent. Math. {\bf 102} (1990), 543--618.
\bibitem[Co]
{Coh75}H. Cohen, 
\textit{Sums involving the values at negative integers of L-functions of quadratic characters}, 
Math. Ann. {\bf 217} (1975), 271--285.
\bibitem[CP]
{C-P04}M. Courtieu and A. Panchishkin, 
\textit{Non-Archimedean L-Functions and Arithmetical Siegel Modular Forms}, 
Lecture Notes in Math. vol. 1471, Springer-Verlag, Berlin, 2004.  
\bibitem[De]
{Del69}P. Deligne, 
\textit{Formes modulaires et repr\'esentations $l$-adiques}, 
S\'eminare Bourbaki {\bf 355} (1969). 
\bibitem[DHI]
{DHI98}K. Doi, H. Hida and H. Ishii, 
\textit{Discriminant of Hecke fields and twisted adjoint $L$-values for $GL(2)$}, 
Invent. Math. {\bf 134} (1998), 547--577.
\bibitem[EZ]
{E-Z85}M. Eichler and D. Zagier, 
\textit{The theory of Jacobi forms}, 
Progr. Math., vol. 55, Birkh{\"a}user Boston Inc., Boston, MA,
1985. 
\bibitem[Fe]
{Fei86}P. Feit, 
\textit{Poles and residues of Eisenstein series for symplectic and unitary groups}, 
Mem. Amer. Math. Soc. {\bf 61} (1986), no. 346, iv+89 pp. 
\bibitem[Fr]
{Fre83}E. Freitag, 
\textit{Siegelsche Modulfunktionen}, 
Grundlehren Math. Wiss., vol. 254, Springer-Verlag, Berlin, 1983.
\bibitem[GKZ]
{GKZ87}B. Gross, W. Kohnen and D. Zagier, 
\textit{Heegner points and derivatives of $L$-series. II}, Math. Ann. {\bf 278} (1987), no.1--4, 497--562.
\bibitem[GS]
{G-S93}R. Greenberg and G. Stevens, 
\textit{$p$-adic $L$-functions and $p$-adic periods of modular forms}, 
Invent. Math. {\bf 111} (1993), 407--447. 
\bibitem[Gu]
{Gue00}P. Guerzhoy, 
\textit{On $p$-adic families of Siegel cusp forms in the Masa\ss\, Spezialschar},
J. reine angew. Math. {\bf 523} (2000), 103--112. 
\bibitem[Ha]
{Har93}G. Harder, 
\textit{Eisensteinkohomologie und die Konstruktion gemischter Motive}, 
Lecture Notes in Math., vol. 1562, Springer-Verlag, Berlin, 1993. 
\bibitem[H1]
{Hid86a}H. Hida, 
\textit{Galois representations into ${\rm GL}_2(\mathbb{Z}_p[[X]])$ attached to ordinary cusp forms},  Invent. Math. {\bf 85} (1986), no.3, 545--613. 
\bibitem[H2]
{Hid86b}H. Hida, 
\textit{Modules of congruences of Hecke algebras and $L$-functions associated with cusp
forms}, 
Amer. J. Math. {\bf 110} (1988), no. 2, 323--382. 
\bibitem[H3]
{Hid93}H. Hida, 
\textit{Elementary theory of $L$-functions and Eisenstein series}, 
London Math. Soc. Student Text {\bf 26} (1993). 
\bibitem[H4]
{Hid02}H. Hida, 
\textit{Control theorems for coherent sheaves on Shimura varieties of PEL-type}, 
J. Inst. Math. Jussieu {\bf 1} (2002), 1--76. 
\bibitem[H5]
{Hid04}H. Hida, 
\textit{$p$-adic automorphic forms on Shimura varieties}, 
Springer Monographs in Math., Springer-Verlag, New York, 2004.
\bibitem[I1]
{Ike01}T. Ikeda, 
\textit{On the lifting of elliptic modular forms to 
Siegel cusp forms of degree $2n$}, 
Ann. of Math. {\bf 154} (2001), no. 3, 641--681. 
\bibitem[I2]
{Ike06}
T. Ikeda, 
\textit{Pullback of the lifting of elliptic cusp forms and Miyawaki's conjecture}, 
Duke Math. J. {\bf 131} (2006), no. 3, 469--497.
\bibitem[I3]
{Ike10}
T. Ikeda, 
\textit{On the lifting of automorphic representations of ${\rm PGL}_2(\mathbb{A})$ to ${\rm Sp}_{2n}(\mathbb{A})$ or $\widetilde{{\rm Sp}_{2n+1}(\mathbb{A})}$ over a totally real field}, 
preprint. 
\bibitem[K]
{Kau59}G. Kaufhold, 
\textit{Dirichletsche Reihe mit Funktionalgleichung in der Theorie der Modulfunktion 2. Grades}, 
Math. Ann. {\bf 137} (1959), 454--476. 
\bibitem[Ka1]
{Kat99}H. Katsurada, 
\textit{An explicit formula for Siegel series}, 
Amer. J. Math. {\bf 121} (1999), 415--452. 
\bibitem[Ka2]
{Kat08}H. Katsurada, 
\textit{Congruence of Siegel modular forms and special values of their standard zeta functions}, 
Math. Z. {\bf 259} (2008), no.1, 97--111.
\bibitem[Ka3]
{Kat10}H. Katsurada, 
\textit{Congruence between Duke-Imamoglu-Ikeda lifts and non-Duke-Imamoglu-Ikeda lifts}, 
preprint. 
\bibitem[KaK]
{KK10}H. Katsurada and H. Kawamura, 
\textit{Ikeda's conjecuture on the period of the Duke-Imamo{\={g}}lu-Ikeda lift}, preprint. 
\bibitem[Ki1]
{Kit84}Y. Kitaoka, 
\textit{Dirichlet series in the theory of quadratic forms}, 
Nagoya Math. J. {\bf 92} (1984), 73--84.
\bibitem[Ki2]
{Kit86}Y. Kitaoka, 
\textit{Local densities of quadratic forms and Fourier coefficients of Eistenstein series}, 
Nagoya Math. J. {\bf 103} (1986), 149--160.
\bibitem[Ko1]
{Koh80}W. Kohnen, 
\textit{Modular forms of half-integral weight on $\Gamma_0(4)$}, 
Math. Ann. {\bf 248} (1980), 249--266.
\bibitem[Ko2]
{Koh81}W. Kohnen, 
\textit{Beziehungen zwischen Modulformen halbganzen Gewichts und Modulformen ganzen Gewichts}, 
Dissertation Bonn. Math. Schr. {\bf 131} (1981). 
\bibitem[Ko3]
{Koh82}W. Kohnen, 
\textit{Newforms of half-integral weight}, 
J. reine angew. Math. {\bf 333} (1982), 32--72. 
\bibitem[Ko4]
{Koh85}W. Kohnen, 
\textit{Fourier coefficients of modular forms of half-integral weight}, 
Math. Ann. {\bf 271} (1985), no.2, 237--268. 
\bibitem[Ko5]
{Koh02}W. Kohnen, 
\textit{Lifting modular forms of half-integral weight to Siegel modular forms of even genus}, 
Math. Ann. {\bf 322} (2002), 787--809. 
\bibitem[KoK]
{K-Ko05}W. Kohnen and H. Kojima, 
\textit{A Maass space in higher genus}, 
Compositio. Math. {\bf 141} (2005), 313--322. 
\bibitem[KoZ]
{K-Z81}W. Kohnen and D. Zagier, 
\textit{Values of $L$-series of modular forms at the center of the critical strip}, 
Invent. Math. {\bf 64} (1981), 175--198.
\bibitem[Ku]
{Kur78}N. Kurokawa, 
\textit{Examples of eigenvalues of Hecke operators on Siegel cusp forms of degree two}, 
Invent. Math. {\bf 49} (1978), no. 2, 149--165.
\bibitem[M]
{Maa79}H. Maass, 
\textit{{\"U}ber eine Spezialschar von Modulformen zweiten Grades I, II, III}, 
Invent. Math. {\bf 52} (1979), 95--104; 
Invent. Math. {\bf 53} (1979), 249--253; 
Invent. Math. {\bf 53} (1979), 255--265.
\bibitem[Ma]
{Man73}J. Manin, 
\textit{Periods of cusp forms and $p$-adic Hecke series}, 
Mat. Sb. {\bf 92} (1973), 378--401; English transl. in Math. USSR-Sb. {\bf 21} (1973). 
\bibitem[Mi]
{Miy06}T. Miyake, 
\textit{Modular forms}, 
Springer Monogr. in Math., Springer-Verlag, Berlin, 2006.
\bibitem[Mu]
{Mur02}K. Murakawa,
\textit{Relations between symmetric power $L$-functions and 
spinor $L$-functions attached to Ikeda lifts}, 
Kodai Math. J. {\bf 25} (2002), 61--71. 
\bibitem[N]
{Nipp}G. Nipp,
\textit{Gordon Nipp's Tables of Quaternary Quadratic Forms} (website), \\
\verb+http://www2.research.att.com/~njas/lattices/nipp.html+
\bibitem[P1]
{Pan00}A. Panchishkin, 
\textit{On the Siegel-Eisenstein measure and its applications}, 
Israel J. Math. {\bf 120} (2000), Part B, 467--509. 
\bibitem[P2]
{Pan10}A. Panchishkin, 
\textit{Two Modularity Lifting Conjectures for Families of Siegel Modular Forms}, 
Math. Notes {\bf 88} (2010), No.4, 25--32. 
\bibitem[Sc1]
{Sch05}R. Schmidt, 
\textit{The Saito-Kurokawa lifting and functoriality}, 
Amer. J. Math. {\bf 127}(1) (2005), 209--240. 
\bibitem[Sc2]
{Sch10}R. Schmidt, 
\textit{On the spin $L$-function of Ikeda's lifts}, 
preprint.  
\bibitem[S1]
{Shim73}G. Shimura, 
\textit{On modular forms of half integral weight}, 
Ann. of Math. {\bf 97} (1973), 440--481. 
\bibitem[S2]
{Shim82}G. Shimura, 
\textit{The periods of certain automorphic forms of arithmetic type}, 
J. Fac. Sci. Univ. Tokyo Sect. IA Math. {\bf 28} (1981), No.3, 605--632 (1982). 
\bibitem[S3]
{Shim94a}G. Shimura, 
\textit{Introduction to the arithmetic theory of automorphic functions}, 
Princeton Univ. Press, Princeton, NJ, 1994.
\bibitem[S4]
{Shim94b}G. Shimura, 
\textit{Euler products and Fourier coefficients of automorphic forms on symplectic groups}, 
Invent. Math. {\bf 116} (1994), no.1-3, 531--576. 
\bibitem[Sh]
{Shin75}T. Shintani, 
\textit{On construction of holomorphic cusp forms of half integral weight}, 
Nagoya Math. J. {\bf 58} (1975), 83--126. 
\bibitem[SU]
{S-U06}C. Skinner and E. Urban, 
\textit{Sur les d{\'e}formations $p$-adiques de certaines repr{\'e}sentations automorphes}, 
J. Inst. Math. Jussieu {\bf 5} (2006), no. 4, 626--698. 
\bibitem[St]
{Ste94}G. Stevens, 
\textit{$\Lambda$-adic modular forms of half-integral weight and a $\Lambda$-adic Shintani lifting}, 
Contemp. Math. {\bf 174} (1994), 129--151. 
\bibitem[Ta]
{Tak10}S. Takemori, 
\textit{$p$-adic Siegel-Eisenstein series of degree two}, 
preprint. 
\bibitem[TU]
{T-U99}J. Tilouine and E. Urban, 
\textit{Several-variable $p$-adic families of Siegel-Hilbert cusp eigensystems and their Galois representations}, Ann. Sci. \'Ecole Norm. Sup. (4) {\bf 32} (1999), no.4, 499--574. 
\bibitem[Wa1]
{Wal80}J.-L. Waldspurger, 
\textit{Correspondence de Shimura}, 
J. Math. Pures Appl. {\bf 59}(1) (1980), 1--132. 
\bibitem[Wa2]
{Wal91}J.-L. Waldspurger, 
\textit{Correspondences de Shimura et quaternions}, 
Forum Math. {\bf 3} (1991), 219--307. 
\bibitem[Wi]
{Wil88}A. Wiles, On ordinary $\lambda$-adic representations associated to modular forms, 
Invent. Math. {\bf 94} (1988), No. 3, 529--573. 
\bibitem[Y]
{Yam10}S. Yamana, 
\textit{Maass relations in higher genus},  
Math. Z. {\bf 265} (2010), no. 2, 263--276. 
\bibitem[Z]
{Zag81}D. Zagier, 
\textit{Sur la conjecture de Saito-Kurokawa} (d'apr\`es H. Maass), 
\textit{Seminar on Number Theory, 
Paris 1979--80}, Birkh\"auser, 371--394, Progr. Math. vol. 12, Birkh{\"a}user Boston, Mass., 1981.  
\bibitem[Zh]
{Zha75}N. A. Zharkovskaya, 
\textit{The connection between the eigenvalues of Hecke operators and the Fourier coefficients of eigenfunctions for Siegel modular forms of genus $n$} (Russian), 
Mat. Sb. (N.S.) {\bf 96} ({\bf 138}) (1975), no.4, 584--593, 646. 
\end{thebibliography}
\end{document}